\renewcommand{\labelenumi}{\textup{(\roman{enumi})}}
\renewcommand\theenumi\labelenumi %this makes enumitems "quotable"
\theoremstyle{plain}
\newtheorem{theorem}{Theorem}[section]
\newtheorem{corollary}[theorem]{Corollary}
\newtheorem{lemma}[theorem]{Lemma}
\theoremstyle{definition}
\newtheorem{definition}[theorem]{Definition}
\newtheorem{remark}[theorem]{Remark}
\newtheorem{example}[theorem]{Example}
\newcommand{\test}{\cC^\infty_c(\real^d)}
\newcommand{\domain}{\mathscr{D}(\cA)}
\newcommand\diff{\mathrm{d}}
\newcommand{\real}{\mathds{R}}
\newcommand{\complex}{\mathds{C}}
\newcommand{\nat}{\mathds{N}}
\newcommand{\X}{(X_t)_{t\geq0}}
\newcommand{\testo}{\cC_c^\infty(\real)}
\newcommand{\hess}{\mathbf{H}}
\newcommand{\ccA}{\mathscr{A}}
\newcommand{\PP}{\mathds{P}}
\newcommand{\NN}{\mathbb{N}}
\newcommand{\EE}{\mathds{E}}
\newcommand{\cB}{\mathcal{B}}
\newcommand{\cC}{\mathcal{C}}
\newcommand{\cD}{\mathcal{D}}
\newcommand{\cA}{\mathcal{A}}
\newcommand{\cE}{\mathcal{E}}
\newcommand{\cM}{\mathcal{M}}
\newcommand{\ee}{\mathrm{e}}
\newcommand{\bone}{\mathds{1}}
\newcommand{\supp}{\operatorname{supp}}
\newcommand{\sgn}{\operatorname{sgn}}
\newcommand{\loc}{\mathrm{loc}}
\renewcommand{\epsilon}{\varepsilon} %% es gibt fonts, wo \epsilon = \varepsilon
\definecolor{see}{HTML}{AA7700}
\definecolor{doubt}{HTML}{AF0000}
\definecolor{todo}{HTML}{0000AA}
\begin{document}
\frenchspacing
\allowdisplaybreaks[4]

\title{\bfseries Invariant measures of L\'evy-type operators and their associated Markov processes}
\author{Anita Behme\thanks{Technische Universit\"at
		Dresden, Institut f\"ur Mathematische Stochastik, Helmholtzstra{\ss}e 10, 01069 Dresden, Germany. \texttt{anita.behme@tu-dresden.de},  \texttt{david.oechsler@tu-dresden.de}},\;  David Oechsler$^\ast$}
\date{\today}
\maketitle
%\vspace{-1cm}
\begin{abstract}
A distributional equation as a criterion for invariant measures of Markov processes associated to Lévy-type operators is established. This is obtained via a  characterization of infinitesimally invariant measures of the associated generators. Particular focus is put on the one-dimensional case where the distributional equation becomes a Volterra-Fredholm integral equation, and on solutions to Lévy-driven stochastic differential equations. The results are accompanied by various illustrative examples. 

\medskip\noindent
MSC 2020: \emph{primary} 60G10, 60J25, 60J35. \emph{secondary} 45B05, 45D05, 60G51, 60H10, 60H20.

\medskip\noindent %alphabetisch
\emph{Keywords:} Feller processes; invariant distributions; Lévy-type operators; Markov processes; stochastic differential equations; Volterra-Fredholm integral equations. 
\end{abstract}

%%%%%%%%%%%%%%%%%%%%%%%%%%%%%%%%%%%%%%%%%%%%%%%%%%%%%%%%%%%%%%%%%%%%%%%%%%%%%%%%%%%%%%%%%%%%%%%%%%%%%%%%%%%%%%%%%%%%%%%%%%%%%%%%%%%%%%%%%%%%%%%%%%%%%%%%%%%%%%%%%%%%%%%%%%%%%%%%%%%%%%%%%%%%%%%%%%%%%%%%%%%%%%%%%%%%%%%%%%%%%%%%%%%%%%%%%%%%
\section{Introduction}
\setcounter{equation}{0}
%%%%%%%%%%%%%%%%%%%%%%%%%%%%%%%%%%%%%%%%%%%%%%%%%%%%%%%%%%%%%%%%%%%%%%%%%%%%%%%%%%%%%%%%%%%%%%%%%%%%%%%%%%%%%%%%%%%%%%%%%%%%%%%%%%%%%%%%%%%%%%%%%%%%%%%%%%%%%%%%%%%%%%%%%%%%%%%%%%%%%%%%%%%%%%%%%%%%%%%%%%%%%%%%%%%%%%%%%%%%%%%%%%%%%%%%%%%%

One of the most fundamental questions regarding stochastic systems is the description of their long-time behavior, i.e. the question whether the respective system admits one or more invariant distributions, and under which conditions these are obtained on the long run.\\

Given a Markov process \(\X\) with generator \((\cA,\domain)\) it is well-known that, under suitable regularity conditions, a measure \(\eta\) is invariant for \(\X\) if and only if 
\begin{align}\label{def_inv_law}
\int \cA f \diff \eta = 0
\end{align}
for all $f$ in some subset $D$ of the domain $\domain$. For example, this equivalence holds true if \(\eta\) is finite, \(\X\) is a Feller process, \((\cA,\domain)\) is its Feller generator, and \(D\) is a core of \((\cA,\domain)\), cf. \cite[Sec. 3]{liggett}. In general, however, \eqref{def_inv_law} is not equivalent to $\eta$ being an invariant measure of a Markov process, in which case solutions of \eqref{def_inv_law} are called infinitesimally invariant. More precisely, given a Banach space  \((\cE, \|\cdot\|)\) of functions \(f:\real^d\to\real\), and  a linear operator \((\cA,\domain)\)  on \(\cE\) such that the smooth functions with compact support $\test$ are contained in the domain $\domain$ of the generator, a measure \(\eta\) on \(\real^d\) is called \textbf{infinitesimally invariant} for \((\cA,\domain)\) if and only if \eqref{def_inv_law} holds for all \(f\in\test\). Note at this point that \eqref{def_inv_law} only requires that \(\cA f\in L^1(\eta)\) for all \(f\in\test\), and in particular (infinitesimal) invariant measures \(\eta\) may be infinite. \\

In this article we focus on Markov processes on \(\real^d\) whose generator is a Lévy-type operator. Heuristically speaking, these processes \textit{behave locally like Lévy processes}, and their class most notably includes rich Feller processes, i.e. Feller processes for which  $\test \subset \domain$. These in turn include Lévy processes, Feller diffusions, and solutions to certain stochastic differential equations (SDEs) such as e.g. Ornstein-Uhlenbeck-type processes (OUT) or generalized Ornstein-Uhlenbeck (GOU) processes. Nonetheless, the class we consider in this article also contains processes which are not necessarily rich Feller processes, e.g. many solutions to Lévy-driven SDEs. \\
Lévy-type operators and the associated Markov processes have been an active area of research for the last two to three decades. For a general introduction to this field see \cite{bottcher,jacob2001levy} and the references therein. Some more recent articles such as \cite{kuhn2018martingale, kuhn} are concerned with the questions, whether a given Lévy-type operator is the generator of a rich Feller process, and under which conditions the solution of a Lévy-driven SDE is a rich Feller process. General stability properties of processes associated to Lévy-type operators, such as non-explosion, recurrence, and ergodicity, are  discussed in \cite{sandric,wang2,wang}. In certain special cases such as e.g. GOU or OUT processes, integro-differential equations for invariant measures have been derived, see \cite{behme4,kuznetsov,sato}. In \cite{albeverio2,albeverio} (infinitesimally) invariant measures of solutions to certain classes of Lévy-driven SDEs are described. Other related sources such as \cite{bogachev} focus on the infinite-dimensional case, while \cite{stannat} considers inifinitesimally invariant measures for operators without jump component. Lastly we mention that in \cite{behme,behme3} invariant measures of Itô processes (of which rich Feller processes are a strict subclass) are described via the characteristic function.\\

In this article we aim to derive a general criterion for (infinitesimally) invariant measures of Lévy-type operators and their associated Markov processes. These criteria will be formulated as a distributional equation for the invariant measure and our methods are closely related to those used in \cite{behme4,kuznetsov,sato}. We start our article with the preliminaries where we recapitulate distribution theory as far as we need it for our purposes, and fix our notation.  In Section~\ref{sec_LToper} we then derive a new representation of a Lévy-type operator which we use in Section \ref{sec_infiinv} to prove a distributional equation for the infinitesimally invariant measures of a Lévy-type operator. Here we also highlight the one-dimensional case in which the distributional equation can be transformed into a  Volterra-Fredholm integral equation. Section \ref{sec_processes} then deals with the application of the prior results to invariant distributions of Markov processes associated to Lévy-type operators. Section~\ref{sec_sde} focuses on solutions to Lévy-driven SDEs and provides a necessary and sufficient condition for an invariant probability law of a solution of an SDE, if the solution process is a Feller process. Moreover, the applicability to more general SDEs is demonstrated. Finally, in Section \ref{sec_Adj} we discuss an implication of our results on the adjoint of a Lévy-type operator.

%%%%%%%%%%%%%%%%%%%%%%%%%%%%%%%%%%%%%%%%%%%%%%%%%%%%%%%%%%%%%%%%%%%%%%%%%%%%%%%%%%%%%%%%%%%%%%%%%%%%%%%%%%%%%%%%%%%%%%%%%%%%%%%%%%%%%%%%%%%%%%%%%%%%%%%%%%%%%%%%%%%%%%%%%%%%%%%%%%%%%%%%%%%%%%%%%%%%%%%%%%%%%%%%%%%%%%%%%%%%%%%%%%%%%%%%%%%%
\section{Preliminaries}\label{sec_prelim}
\setcounter{equation}{0}
%%%%%%%%%%%%%%%%%%%%%%%%%%%%%%%%%%%%%%%%%%%%%%%%%%%%%%%%%%%%%%%%%%%%%%%%%%%%%%%%%%%%%%%%%%%%%%%%%%%%%%%%%%%%%%%%%%%%%%%%%%%%%%%%%%%%%%%%%%%%%%%%%%%%%%%%%%%%%%%%%%%%%%%%%%%%%%%%%%%%%%%%%%%%%%%%%%%%%%%%%%%%%%%%%%%%%%%%%%%%%%%%%%%%%%%%%%%%

%%%%%%%%%%%%%%%%%%%%%%%%%%%%%%%%%%%%%%%%%%%%%%%%%%%%%%%%%%%%%%%%%%%%%%%%%%%%%%%%%%%%%%%%%%%%%%%%%%%%%%%%%%%%%%%%%%%%%%%%%%%%%%%%%%%%%%%%%%%%%%%%%%%%%%%%%%%%%%%%%%%%%%%%%%%%%%%%%%%%%%%%%%%%%%%%%%%%%%%%%%%%%%%%%%%%%%%%%%%%%%%%%%%%%%%%%%%%
\subsection{Notations}
%%%%%%%%%%%%%%%%%%%%%%%%%%%%%%%%%%%%%%%%%%%%%%%%%%%%%%%%%%%%%%%%%%%%%%%%%%%%%%%%%%%%%%%%%%%%%%%%%%%%%%%%%%%%%%%%%%%%%%%%%%%%%%%%%%%%%%%%%%%%%%%%%%%%%%%%%%%%%%%%%%%%%%%%%%%%%%%%%%%%%%%%%%%%%%%%%%%%%%%%%%%%%%%%%%%%%%%%%%%%%%%%%%%%%%%%%%%%

Throughout this article we write \(\cB_b(\real^d)\) for the Banach space of bounded Borel measurable functions \(f:\real^d\to\real\) equipped with the sup norm $\| \cdot \|_\infty$. Further we write \(\cC_b(\real^d)\subset\cB_b(\real^d)\) for the subspace of bounded continuous functions on \(\real^d\), $\cC_c^k(\real^d)\subset \cC_b(\real^d)$, $k\in\NN$, for the subspaces of $k$-times continuously differentiable functions with compact support, \(\cC_c^\infty(\real^d)\subset\cC_c^k(\real^d)\) for the subspace of test functions on \(\real^d\), that is the space of smooth functions with compact support, and \(\cC_\infty(\real^d)\subset\cC_b(\real^d)\) for the subspace of continuous functions vanishing at infinity, i.e., continuous functions \(f:\real^d\to\real\) such that for all \(\epsilon>0\) there exists a compact set \(K\subset \real^d\) such that for all \(x\in \real^d\setminus K\) it holds \(|f(x)|<\epsilon\).
For an open set \(\Omega\subset\real^d\) we denote \(\cB_b(\Omega)\), \(\cC_c^\infty(\Omega)\), and alike for the respective subspaces of functions with support in $\Omega$.\\
For any measure $\eta$ we write $L^p(\eta)$ and $W^{k,p}(\eta)$ for the classic Lebesgue and Sobolev spaces w.r.t. $\eta$.\\ 
Open and closed balls with radius \(r>0\) centered around \(x\in\real^d\) are denoted by \(B(x,r):=\{y\in\real^d: |y-x|<r\}\) and \(\overline B(x,r):=\{y\in\real^d: |y-x|\leq r\}\), respectively.\\
Partial derivatives of a function $f:\real^d\to\real$  will be written as \(\partial_i f\), \(i=1,\ldots,d\), while the gradient operator is denoted by \(\nabla :=(\partial_i)^T_{i=1,\ldots,d}\), and the Hessian operator by \(\hess:=(\partial_i\partial_j)_{i,j=1,\ldots,d}\).  Directional derivatives along a given vector \(y\in\real^d\setminus\{0\}\) are denoted by \(\partial_y:=y\cdot\nabla\). \\ 
Moreover, given two vectors $y,z\in \real^d$, $y\cdot z=\sum_{i=1}^d y_i z_i$ is their scalar product. Likewise, given a vector-valued function \(f:\real^d\to\real^d, x\mapsto (f_i(x))_{i=1,\ldots,d}\) we write \(\nabla\cdot f:= \sum_{i=1}^d\partial_i f_i\), and similarly, \(\hess\cdot M:=\sum_{i,j=1}^d \partial_i\partial_jM_{ij}\) for a matrix-valued function \(M:\real^d\to\real^{d\times d},x\mapsto (M_{ij}(x))_{i,j=1,\ldots,d}\).

%%%%%%%%%%%%%%%%%%%%%%%%%%%%%%%%%%%%%%%%%%%%%%%%%%%%%%%%%%%%%%%%%%%%%%%%%%%%%%%%%%%%%%%%%%%%%%%%%%%%%%%%%%%%%%%%%%%%%%%%%%%%%%%%%%%%%%%%%%%%%%%%%%%%%%%%%%%%%%%%%%%%%%%%%%%%%%%%%%%%%%%%%%%%%%%%%%%%%%%%%%%%%%%%%%%%%%%%%%%%%%%%%%%%%%%%%%%%
\subsection{Distribution theory} 
%%%%%%%%%%%%%%%%%%%%%%%%%%%%%%%%%%%%%%%%%%%%%%%%%%%%%%%%%%%%%%%%%%%%%%%%%%%%%%%%%%%%%%%%%%%%%%%%%%%%%%%%%%%%%%%%%%%%%%%%%%%%%%%%%%%%%%%%%%%%%%%%%%%%%%%%%%%%%%%%%%%%%%%%%%%%%%%%%%%%%%%%%%%%%%%%%%%%%%%%%%%%%%%%%%%%%%%%%%%%%%%%%%%%%%%%%%%%

Let $\Omega\subseteq \real^d$ be open, and abbreviate \(\cD(\Omega):=\cC_c^\infty(\Omega)\). Then we write \(\cD'(\Omega)\) for the space of all \textbf{distributions} \(T:\cD(\Omega) \to \real, f\mapsto T(f)=\langle T, f\rangle \), i.e. the space of all linear functionals that are continuous w.r.t. uniform convergence on compact subsets of all derivatives. If \(\Omega=\real^d\), which will be the case most of the time, we write \(\cD':=\cD'(\real^d)\). \\
As it is well known, \(T\in\cD'(\Omega)\) if for all compact sets \(K\subset \Omega\) there exist a constant \(C_K>0\) and a number \(N_K\in\nat\) such that for all functions \(f\in \cD(\Omega)\) with \(\supp f\subset K\) it holds
\begin{align}\label{def_distr}
	|\langle T, f\rangle|\leq C_K\max\{\|\partial^\alpha f\|_\infty: |\alpha|\leq N_K\}.
\end{align}
This allows to define the \textbf{order} \(N_T\in\nat\cup\{\infty\}\) of \(T\) as 
\begin{align*}
	N_T:=\inf\{n\in\nat: \eqref{def_distr} \text{ holds for all } K\subset \Omega \text{ compact with } N_K=n\}.
\end{align*} 
If $T\in \cD'(\Omega)$ is of order $k\in \NN$, it can be uniquely extended to a linear functional on \(\cD^k(\Omega):=\cC^k_c(\Omega)\) which is continuous w.r.t. uniform convergence on compact subsets of all derivatives up to order \(k\). We write \(\cD'^k(\Omega)\) for the space of these extensions. \\
Distributions of order $0$ can be represented by Radon measures: A measure \(m\) on \((\Omega,\cB(\Omega))\) which is inner regular and locally finite, i.e. for which 
\begin{align*}
	m(\Omega')= \sup\{m(K): K\subset\Omega' \text{ compact}\} \quad \text{for any open set }\Omega'\subset\Omega,
\end{align*}
and such that for all \(x\in\Omega\) there exists an open set \(\Omega''\) such that \(x\in\Omega''\) and \(m(\Omega'')<\infty\), is called a \textbf{Radon measure} on \((\Omega,\cB(\Omega))\). The difference \(m:=m_1-m_2\) of two Radon measures \(m_1,m_2\) is called a \textbf{real-valued Radon measure} on \((\Omega,\cB(\Omega))\), and we write \(|m|=m_1+m_2\). \\
By the Riesz representation theorem, for every distribution \(T\in\cD'^0(\Omega)\) there exists a real-valued Radon measure \(m\) on \((\Omega,\cB(\Omega))\) such that
\begin{align}\label{eq_radon}
	\langle T, f\rangle = \int_{\Omega} f(x)m(\diff x)
\end{align}
for all \(f\in \cD^0(\Omega)\). Vice versa, every real-valued Radon measure on \((\Omega,\cB(\Omega))\) defines a distribution in \(\cD'^0(\Omega)\) via the mapping \eqref{eq_radon}.\\
Further, a distribution \(T\in\cD'(\Omega)\) is called \textbf{regular} if there exists a function \(g\in L^1_\loc(\real^d)\) such that
\begin{align*}
	\langle T, f\rangle =\int_{\real^d} f(x)g(x)\diff x
\end{align*} 
for all \(f\in\cD(\Omega)\). From the above it is clear that all regular distributions are of order $0$.\\
We refer to \cite{brezis2011functional, duistermaat} for references of the above and further background information.\\ 

Since we can identify 
\begin{itemize}
\item a distribution of order \(k\in\nat\) with its unique continuous extension in \(\cD'^k\),
\item a distribution of order \(0\) with its associated real-valued Radon measure, and
\item a regular distribution with its associated locally integrable function,
\end{itemize}
we will sometimes abuse notation and use the same symbol for the two respective objects. The correct interpretation will always be clear from the given context. \\

We will frequently deal with the question whether the multiplication of a function \(g:\real^d\to\real\) and a distribution \(\eta\in\cD'^0\), i.e. the mapping \(g\eta: f\mapsto \int_{\real^d} f(x) g(x)\eta(\diff x)\), remains a distribution. The following Lemma provides conditions for this.  Its elementary proof is included as we were unable to find a suitable reference.
\begin{lemma}\label{lem_abcM}
	Let \(\eta\) be a real-valued Radon measure on \(\real^d\).
	\begin{enumerate}[resume]
		\item If \(g\in L^1_\loc(\eta)\), then \(g\eta\in\cD'^0\).
		\item If \(g\in W^{1,1}_\loc(\eta)\), i.e. if all first weak derivatives of $g$ exist and are in $L^1_\loc(\eta)$, then \(g\partial_i\eta\in\cD'^1\) for all \(i\in\{1,\ldots,d\}\). 
	\end{enumerate}
\end{lemma}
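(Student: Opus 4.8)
The plan is to read off both parts directly from the definition \eqref{def_distr} of a distribution and its order, using the regularity estimates on compact sets.

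For the first part, I would consider the linear functional $g\eta\colon f\mapsto\int_{\real^d}f(x)g(x)\,\eta(\diff x)$ on $\cC_c^\infty(\real^d)$. It is well defined because, for $f\in\cC_c^\infty(\real^d)$, the integrand is bounded and compactly supported while $g\in L^1_\loc(|\eta|)$. To bound its order, fix a compact set $K\subset\real^d$ and put $C_K:=\int_K|g|\,\diff|\eta|<\infty$; then $|\langle g\eta,f\rangle|\le\|f\|_\infty\,C_K$ for every $f\in\cC_c^\infty(\real^d)$ with $\supp f\subset K$, which is exactly \eqref{def_distr} with $N_K=0$. Hence $g\eta$ is a distribution of order $0$, and since the displayed estimate also shows that $f\mapsto\int fg\,\diff\eta$ is continuous on $\cC_c^0(\real^d)$ with respect to locally uniform convergence, this formula is already its own continuous extension, so $g\eta\in\cD'^0$. (Alternatively one may observe that $A\mapsto\int_A g\,\diff\eta$ is a real-valued Radon measure and invoke the Riesz representation theorem recalled above.)

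For the second part the only point requiring care is the meaning of the product $g\,\partial_i\eta$: the naive pairing $\langle g\,\partial_i\eta,f\rangle:=\langle\partial_i\eta,gf\rangle$ is unavailable since $gf$ need not be $\cC^1$ when $g$ is merely weakly differentiable. I would instead \emph{define} $g\,\partial_i\eta$ by the Leibniz rule,
\[
 g\,\partial_i\eta\;:=\;\partial_i(g\eta)-(\partial_i g)\,\eta ,
\]
equivalently $\langle g\,\partial_i\eta,f\rangle=-\int_{\real^d}g\,\partial_i f\,\diff\eta-\int_{\real^d}f\,\partial_i g\,\diff\eta$ for $f\in\cC_c^1(\real^d)$; both integrals are finite because $g,\partial_i g\in L^1_\loc(|\eta|)$ and $f,\partial_i f$ are bounded with compact support (and one checks this is unambiguous once $g$ is regarded as an element of $W^{1,1}_\loc(\real^d)$ with $g,\partial_i g\in L^1_\loc(|\eta|)$, since altering $g$ on a Lebesgue-null set changes none of the integrals).

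It then remains to read off the order of the two summands. By the first part, $(\partial_i g)\,\eta\in\cD'^0\subset\cD'^1$. For the derivative term one has $\langle\partial_i(g\eta),f\rangle=-\langle g\eta,\partial_i f\rangle=-\int_{\real^d}\partial_i f\,g\,\diff\eta$, hence $|\langle\partial_i(g\eta),f\rangle|\le\|\partial_i f\|_\infty\,C_K$ whenever $\supp f\subset K$, which is \eqref{def_distr} with $N_K=1$; thus $\partial_i(g\eta)\in\cD'^1$. Consequently $g\,\partial_i\eta$ is a difference of two elements of $\cD'^1$ and therefore lies in $\cD'^1$, completing the proof. Beyond choosing the right definition of the product in the second part, the argument is entirely routine; the only real obstacle is conceptual, namely recognizing that $g\,\partial_i\eta$ must be interpreted via the Leibniz decomposition rather than by composing $\partial_i\eta$ with multiplication by $g$.
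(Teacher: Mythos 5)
Your proof is correct and follows essentially the same route as the paper: part (i) via the sup-norm estimate $|\langle g\eta,f\rangle|\le\|f\|_\infty\int_K|g|\,\diff|\eta|$ giving order $0$, and part (ii) via the Leibniz decomposition $g\,\partial_i\eta=\partial_i(g\eta)-(\partial_i g)\eta$ with each summand in $\cD'^1$. Your explicit remark that the product $g\,\partial_i\eta$ must be \emph{defined} through this decomposition (rather than by pairing $\partial_i\eta$ with $gf$) is exactly the point the paper handles by invoking the product rule for $f$ smooth and $g$ weakly differentiable, so the two arguments coincide in substance.
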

\begin{proof}
	(i) Let \(f\in\cC_c(\real^d)=\cD^0\) and let \(K\subset\real^d\) be compact such that \(\supp f\subseteq K\). By assumption, \(\left|\int_{\real^d} f(x)g(x)\eta(\diff x)\right|\leq \|f\|_\infty\left|\int_{K} g(x)\eta(\diff x)\right|<\infty\), which implies the claim. \\
(ii) Assume \(g\in W^{1,1}_\loc(\eta)\). From (i) it follows that \(g\eta \in\cD'^0\) and \((\partial_i g) \eta\in \cD'^0\) for all \(i\in\{1,\ldots,d\}\). Moreover, the product rule \(\partial_i(g\eta) = (\partial_i g)\eta + g\partial_i \eta\) applies. Indeed, let \(f\in\cC_c^\infty(\real^d)\), then we have
	 \begin{align*}
	 \langle \partial_i (g\eta), f\rangle &= -\int_{\real^d} \partial_i f(x) g(x) \eta(\diff x)\\
	 &= \int_{\real^d} \left(f(x)\partial_ig(x)-\partial_i(f(x)g(x))\right)\eta(\diff x)\\
	 &=  \langle (\partial_ig) \eta,f\rangle + \langle g\partial_i \eta, f\rangle,
	 \end{align*}
	 because the product rule holds for \(f\) and \(g\), see \cite[Sec. 6.15]{rudinFA}. Since \(\partial_i(g\eta) \in \cD'^1\) and \((\partial_i g)\eta\in\cD'^0\) assertion (ii) follows.
\end{proof}

%%%%%%%%%%%%%%%%%%%%%%%%%%%%%%%%%%%%%%%%%%%%%%%%%%%%%%%%%%%%%%%%%%%%%%%%%%%%%%%%%%%%%%%%%%%%%%%%%%%%%%%%%%%%%%%%%%%%%%%%%%%%%%%%%%%%%%%%%%%%%%%%%%%%%%%%%%%%%%%%%%%%%%%%%%%%%%%%%%%%%%%%%%%%%%%%%%%%%%%%%%%%%%%%%%%%%%%%%%%%%%%%%%%%%%%%%%%%
\section{Lévy-type operators}\label{sec_LToper}
\setcounter{equation}{0}
%%%%%%%%%%%%%%%%%%%%%%%%%%%%%%%%%%%%%%%%%%%%%%%%%%%%%%%%%%%%%%%%%%%%%%%%%%%%%%%%%%%%%%%%%%%%%%%%%%%%%%%%%%%%%%%%%%%%%%%%%%%%%%%%%%%%%%%%%%%%%%%%%%%%%%%%%%%%%%%%%%%%%%%%%%%%%%%%%%%%%%%%%%%%%%%%%%%%%%%%%%%%%%%%%%%%%%%%%%%%%%%%%%%%%%%%%%%%

Let \(\cE\) be some Banach space of functions \(f: \real^d\to\real\). A \textbf{Lévy-type operator} is a pair \((\cA,\domain)\) consisting of a subset \(\domain\subset \cE\) and an integro-differential operator \(\cA: \domain \to \cE\) of the form 
\begin{align}\label{eq-operator}
	\cA f(x) = ~~&a(x)\cdot \nabla f(x) +  \frac12 \nabla \cdot b(x)\nabla f(x)\nonumber \\
	&+ \int_{\real^d}(f(x+y)-f(x)-\nabla f(x)\cdot y\bone_{\{|y|<1\}})\Pi(x,\diff y), \quad f\in\domain,
\end{align}
where \(a\) is an \(\real^d\)-valued Borel measurable function, \(b\) is a symmetric positive semidefinite \(\real^{d\times d}\)-valued Borel measurable function, and \(\Pi\) is a Lévy kernel in \(\real^d\), i.e. \(\Pi(x,\diff z)\) is a \(d\)-dimensional Lévy measure for all fixed  \(x\in\real^d\), and \(\Pi(\cdot,B)\) is Borel measurable for all \(B\subset\cB(\real^d)\). For notational convenience we assume throughout that  $\Pi(x,\{0\})=0$ for all \(x\in\real^d\). We call \((a,b,\Pi)\) the \(\mathbf{x}\)\textbf{-dependent Lévy triplet} or \textbf{characteristic triplet} of \((\cA,\domain)\). Whenever needed, we address the components of \(a\) and \(b\) by writing \(a(x)=(a(x)_i)_{i=1,\ldots,d}\) and \(b(x)=(b(x)_{ij})_{i,j=1,\ldots,d}\).\\

To characterise the (infinitesimally) invariant measures associated to the Lévy-type operator \((\cA,\domain)\), we intend to interpret \eqref{def_inv_law} in a distributional sense. To this aim, in this section we derive a slightly different representation of \(\cA\) that allows to isolate the test functions $f\in \cE$.\\

Our derivation relies on some preparatory concepts, and we start with a decomposition of the Lévy kernel as it is often used in the special case of Lévy processes, i.e. of Markov processes generated by Lévy-type operators for which the three components of the characteristic triplet \((a,b,\Pi)\) are constant in \(x\in\real^d\). However, other than in the context of Lévy processes, where a suitable cut-off function typically separates the jump measure into two measures, we will go one step further and separate the Lévy kernel into three different kernels.

\begin{definition}\label{def_decompose}
	Let \(\Pi\) be a Lévy kernel in \(\real^d\) and let \(h_\nu,h_\mu,h_\rho\in\cC(\real^d)\) be three non-negative, continuous functions such that 
	$$\Pi(x,\diff y)=h_\nu(y)\Pi(x,\diff y)+h_\mu(y)\Pi(x,\diff y)+h_\rho(y)\Pi(x,\diff y)\;\text{ for all }\;x\in\real^d.$$ 
	Setting \(\nu:=h_\nu\Pi,\mu:=h_\mu\Pi\), and \(\rho:=h_\rho\Pi\), we call the decomposition \(\pi=(\nu,\mu,\rho)\) \textbf{convenient}, if 
	\begin{enumerate}
		\item \(\int_{\real^d}\nu(x,\diff y)<\infty\) for all \(x\in\real^d\),
		\item \(\int_{\real^d} (1\wedge |y|) \mu(x,\diff y) <\infty\) for all \(x\in\real^d\),
		\item \(\int_{\real^d} (|y|\wedge |y|^2) \rho(x,\diff y) <\infty\) for all \(x\in\real^d\).
	\end{enumerate}
	For any convenient decomposition, the components \(\nu,\mu\), and \(\rho\) are referred to as the \textbf{large}, \textbf{medium}, and \textbf{small jump kernel} of \(\Pi\), respectively.
\end{definition}

To avoid confusion we will throughout this text use the Greek letters \(\nu,\mu,\rho,\) and $\pi$ exclusively in their respective roles in the previous definition. 

\begin{remark}\label{rem_decomp}
	Note that for any Lévy kernel \(\Pi\) in \(\real^d\) there exists a convenient decomposition. For example set \(h_\nu(y)=\frac{|y|^2}{|y|^2+|y|+1},~h_\mu(y)=\frac{|y|}{|y|^2+|y|+1}\) and \(h_\rho(y)=\frac{1}{|y|^2+|y|+1}\). \\
	Moreover, one can always find a convenient decomposition such that at least one of the three components vanishes. For example, the medium jump kernel can be discarded by setting \(h_\nu(y):=\frac{|y|^2}{|y|^2+1}\), \(h_\mu(y)=0\) and \(h_\rho(y):=\frac{1}{|y|^2+1}\). However, it will sometimes be preferable to work with, say, the medium jump kernel instead of the small jump kernel or vice versa, if both options are available. %see also Corollary \ref{cor_2convo} below.
\end{remark}

Besides the above decomposition w.r.t. the jump sizes, we also need to establish a polar decomposition of the jump kernel. To start with this, note that, by \cite[Lem. 59.3]{sato2nd}, for any \(\sigma\)-finite measure \(\Pi\) on \(\real^d\) satisfying \(\Pi(\{0\})=0\), there exist a finite measure \(\Pi_{o}\) on \(S^{d-1}:=\{\xi\in\real^d; |\xi|=1\}\) with \(\Pi_{o}(S^{d-1})\geq0\) and a family \((\Pi_\xi)_{\xi\in S^{d-1}}\) of \(\sigma\)-finite measures on \((0,\infty)\) with \(\Pi_\xi((0,\infty))>0\) such that \(\Pi_\xi(B)\) is measurable in \(\xi\) for each \(B\in\cB((0,\infty))\) and 
\begin{align*}
	\Pi(B)= \int_{S^{d-1}}\int_{(0,\infty)}\bone_B(r\xi) \Pi_\xi(\diff r)\Pi_{o}(\diff \xi).
\end{align*}
Any such pair \((\Pi_o,(\Pi_\xi)_{\xi\in S^{d-1}})\) is called a \textbf{polar decomposition} of \(\Pi\) with \(\Pi_{o}\) being referred to as \textbf{spherical part} and \((\Pi_\xi)_{\xi\in S^{d-1}}\) as \textbf{radial part} of \(\Pi\). Again by \cite[Lem. 59.3]{sato2nd}, the pair \((\Pi_o,(\Pi_\xi)_{\xi\in S^{d-1}})\) is unique up to multiplication with a measurable function, i.e. for any other polar decomposition \((\hat\Pi_o,(\hat\Pi_\xi)_{\xi\in S^{d-1}})\) there exists a measurable function \(c:S^{d-1}\to (0,\infty)\) such that \(c(\xi)\hat\Pi_o(\diff \xi)=\Pi_o(\diff \xi)\), and \(\hat\Pi_\xi(\diff r)=c(\xi)\Pi_\xi(\diff r)\) for \(\Pi_o\)-a.e. \(\xi\in S^{d-1}\).\\

The following Lemma implies in particular, that for any Lévy measure we can even find a polar decomposition, such that the radial part is again a collection of Lévy measures:
\begin{lemma}\label{cor_levydec}
	Let \(\Pi\) be a \(\sigma\)-finite measure on
	\(\real^d\) with \(\Pi(\{0\})=0\). Let
	\(f:(0,\infty)\to\real_+\) be a function such
	that \(\int_{\real^d} f(|x|)\Pi(\diff x)<\infty\).
	Then there exists a polar decomposition
	\((\Pi_o,(\Pi_\xi)_{\xi\in S^{d-1}})\) of \(\Pi\) such that
	\(\int_{(0,\infty)} f(r)\Pi_\xi(\diff r)<\infty\)
	for all \(\xi\in S^{d-1}\). 
\end{lemma}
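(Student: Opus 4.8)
The plan is to take an arbitrary polar decomposition provided by \cite[Lem. 59.3]{sato2nd} and then repair it along the (negligibly many) directions in which the radial integral of $f$ is infinite. First I would fix any polar decomposition $(\hat\Pi_o,(\hat\Pi_\xi)_{\xi\in S^{d-1}})$ of $\Pi$ and set
\[
g(\xi):=\int_{(0,\infty)}f(r)\,\hat\Pi_\xi(\diff r)\in[0,\infty],\qquad \xi\in S^{d-1}.
\]
Measurability of $g$ follows from the measurability of $\xi\mapsto\hat\Pi_\xi(B)$, $B\in\cB((0,\infty))$, by approximating $f$ from below by simple functions and applying monotone convergence; running the same approximation through the polar decomposition formula with integrand $x\mapsto f(|x|)$ and using $|r\xi|=r$ gives
\[
\int_{S^{d-1}}g(\xi)\,\hat\Pi_o(\diff\xi)=\int_{\real^d}f(|x|)\,\Pi(\diff x)<\infty .
\]
Consequently $N:=\{\xi\in S^{d-1}:g(\xi)=\infty\}$ is a Borel set with $\hat\Pi_o(N)=0$, and the cone $C:=\{r\xi:r>0,\ \xi\in N\}$ over $N$ satisfies $\Pi(C)=\int_N\hat\Pi_\xi((0,\infty))\,\hat\Pi_o(\diff\xi)=0$.

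Next I would simply overwrite the radial measures on $N$. Fixing some $r_0>0$, put $\Pi_o:=\hat\Pi_o$, $\Pi_\xi:=\hat\Pi_\xi$ for $\xi\in S^{d-1}\setminus N$, and $\Pi_\xi:=\delta_{r_0}$ for $\xi\in N$. I would then check that $(\Pi_o,(\Pi_\xi)_{\xi\in S^{d-1}})$ is again a polar decomposition of $\Pi$: each $\Pi_\xi$ is a $\sigma$-finite measure on $(0,\infty)$ with $\Pi_\xi((0,\infty))>0$; the map $\xi\mapsto\Pi_\xi(B)$ is measurable, being the gluing of the measurable map $\xi\mapsto\hat\Pi_\xi(B)$ on $S^{d-1}\setminus N$ with the constant $\bone_B(r_0)$ on $N$; and since $\Pi_\xi$ and $\hat\Pi_\xi$ differ only on the $\hat\Pi_o$-null set $N$,
\begin{align*}
\int_{S^{d-1}}\int_{(0,\infty)}\bone_B(r\xi)\,\Pi_\xi(\diff r)\,\Pi_o(\diff\xi)
&=\int_{S^{d-1}\setminus N}\int_{(0,\infty)}\bone_B(r\xi)\,\hat\Pi_\xi(\diff r)\,\hat\Pi_o(\diff\xi)\\
&=\Pi(B)-\Pi(B\cap C)=\Pi(B)
\end{align*}
for all $B\in\cB(\real^d)$.

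Finally, checking the integrability asserted by the lemma is immediate with this choice: for $\xi\in S^{d-1}\setminus N$ one has $\int_{(0,\infty)}f(r)\,\Pi_\xi(\diff r)=g(\xi)<\infty$ by the definition of $N$, and for $\xi\in N$ one has $\int_{(0,\infty)}f(r)\,\Pi_\xi(\diff r)=f(r_0)<\infty$ since $f$ is real-valued. The only delicate point — and the reason a naive argument does not quite work — is the gap between ``$g$ is $\hat\Pi_o$-integrable'' and ``$g$ is finite at \emph{every} $\xi\in S^{d-1}$'': the hypothesis only yields the former, and the remedy is exactly the modification of the radial part on the $\Pi$-negligible cone $C$, which by the displayed computation leaves $\Pi$ unchanged. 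I do not expect any obstacle beyond this piece of bookkeeping.
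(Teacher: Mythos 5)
Your proof is correct and takes essentially the same route as the paper: fix an arbitrary polar decomposition, note that the set of directions \(\xi\) with \(\int_{(0,\infty)}f(r)\,\Pi_\xi(\diff r)=\infty\) is a \(\Pi_o\)-nullset, and modify the radial measures on that nullset, which leaves \(\Pi\) unchanged. The only (minor) difference is the repair itself: the paper sets \(\Pi_\xi=0\) there, while you substitute \(\delta_{r_0}\), which in fact respects the stated requirement \(\Pi_\xi((0,\infty))>0\) of a polar decomposition more faithfully.
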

\begin{proof}
	Clearly, there exists a polar decomposition \((\Pi_o,(\Pi_\xi)_{\xi\in S^{d-1}})\) of \(\Pi\). By virtue of the assumption on  \(\Pi\) it holds
	\begin{align*}
		\int_{S^{d-1}}\int_{(0,\infty)} f(r) \Pi_\xi(\diff r)\,\Pi_{o}(\diff \xi)<\infty.
	\end{align*}
	Thus, 	\(I:=\{\xi\in S^{d-1}; \int_{(0,\infty)}f(r)\Pi_\xi(\diff r) =\infty\}\) 
	is a \(\Pi_o\)-nullset, and in particular measurable. We may therefore set \(\Pi_\xi=0\) on \(I\) to obtain the required polar decomposition.
\end{proof}

The factorization into polar coordinates can now be applied pointwise to a Lévy kernel in \(\real^d\). This enables the following definition.

\begin{definition} \label{def_inttail}
	Let \(\Pi\) be a Lévy kernel in \(\real^d\) and for all fixed \(x\in\real^d\) choose a polar decomposition \((\Pi_{o}(x,\cdot),(\Pi_\xi(x,\cdot))_{\xi\in S^{d-1}})\)  of \(\Pi(x,\cdot)\) as in Lemma \ref{cor_levydec}. Then we define \(\overline\Pi\), the \textbf{integrated radial tail} of \(\Pi\), by  
	\begin{align*}
		\overline\Pi(x,B):=\int_{S^{d-1}}\int_{(0,\infty)} \bone_{B}(z\xi) \int_{(z,\infty)} \Pi_\xi(x,\diff r)\, \diff z\, \Pi_{o}(x,\diff \xi), \quad B\in\cB(\real^d).
	\end{align*}
Moreover, if \(\int_{\real^d}(|y|\wedge |y|^2) \Pi(x,\diff y)<\infty\) for all \(x\in\real^d\), we set for all \(B\in\cB(\real^d)\)
	\begin{align*}
		\overline{\overline\Pi}(x,B):= \int_{S^{d-1}}\int_{(0,\infty)} \bone_{B}(z_2\xi) \int_{(z_2,\infty)}\int_{(z_1,\infty)} \Pi_\xi(x,\diff r)\,\diff z_1\, \diff z_2\, \Pi_{o}(x,\diff \xi),
	\end{align*}
and call \(\overline{\overline\Pi}\) the \textbf{double-integrated radial tail} of \(\Pi\).
\end{definition}

The following Lemma proves well-definedness of the (double-)integrated radial tail.

\begin{lemma}
Let \(\Pi\) be a Lévy kernel in \(\real^d\). Then the integrated radial tail \(\overline\Pi\) of \(\Pi\) is independent of the choice of the polar decomposition, \(\overline\Pi(x,\diff z)\) is a \(d\)-dimensional \(\sigma\)-finite measure with \(\overline\Pi(x,\{0\})=0\) for all \(x\in\real^d\), and \(\overline\Pi(\cdot,B)\) is Borel measurable for all \(B\in\cB(\real^d)\). \\ The same holds true for the double-integrated radial tail if \(\int_{\real^d}(|y|\wedge |y|^2) \Pi(x,\diff y)<\infty\).
\end{lemma}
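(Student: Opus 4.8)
The plan is to derive, for each fixed \(x\in\real^d\), a representation of \(\overline\Pi(x,\cdot)\) as an integral against \(\Pi(x,\cdot)\) alone, with the chosen polar decomposition eliminated; once that identity is available, all four assertions drop out by routine measure theory. Concretely, I would fix \(x\) and a polar decomposition \((\Pi_o(x,\cdot),(\Pi_\xi(x,\cdot))_{\xi\in S^{d-1}})\), note that every integrand in sight is non-negative and jointly Borel measurable so that Tonelli's theorem applies freely, rewrite \(\int_{(z,\infty)}\Pi_\xi(x,\diff r)=\int_{(0,\infty)}\bone_{\{r>z\}}\,\Pi_\xi(x,\diff r)\) in Definition~\ref{def_inttail}, and move the Lebesgue integral in \(z\) inside past \(\Pi_\xi(x,\diff r)\) to obtain
\[
\overline\Pi(x,B)=\int_{S^{d-1}}\int_{(0,\infty)}\Big(\int_0^r\bone_B(z\xi)\,\diff z\Big)\,\Pi_\xi(x,\diff r)\,\Pi_o(x,\diff\xi).
\]
Then I would apply the polar integration formula behind \cite[Lem.~59.3]{sato2nd}, extended from indicators to non-negative Borel functions by monotone convergence, to the function \(\psi_B\colon\real^d\to[0,\infty]\) given by \(\psi_B(y):=\int_0^{|y|}\bone_B\big(z\,y/|y|\big)\,\diff z\) for \(y\neq0\) and \(\psi_B(0):=0\), which yields
\[
\overline\Pi(x,B)=\int_{\real^d}\psi_B(y)\,\Pi(x,\diff y),\qquad B\in\cB(\real^d).
\]
The right-hand side no longer mentions the polar decomposition, so independence of the choice follows at once. (The hypothesis in Definition~\ref{def_inttail} that the decomposition be taken as in Lemma~\ref{cor_levydec} is only needed so that the iterated integral there is meaningful term by term; the conclusions of the Lemma hold for any polar decomposition.)

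From this identity the remaining points are short. Since \(\psi_\emptyset\equiv0\) and \(\psi_{\bigcup_nB_n}=\sum_n\psi_{B_n}\) for pairwise disjoint \((B_n)\) by monotone convergence in the \(z\)-integral, two more applications of monotone convergence make \(B\mapsto\overline\Pi(x,B)\) a measure on \(\cB(\real^d)\). Because \(z\,y/|y|\neq0\) for \(z>0\), \(\psi_{\{0\}}\equiv0\) and hence \(\overline\Pi(x,\{0\})=0\). For \(\sigma\)-finiteness I would use \(\psi_{\{a\le|y|\le b\}}(y)=\big(\min(|y|,b)-a\big)^+\le(b-a)\bone_{\{|y|>a\}}\) for \(0<a<b<\infty\), so that \(\overline\Pi\big(x,\{a\le|y|\le b\}\big)\le(b-a)\,\Pi\big(x,\{|y|>a\}\big)<\infty\) as \(\Pi(x,\cdot)\) is a Lévy measure, and then cover \(\real^d=\{0\}\cup\bigcup_{n\ge1}\{1/n\le|y|\le n\}\). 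Finally, \(\psi_B\) is a fixed non-negative Borel function independent of \(x\) and \(\Pi(\cdot,A)\) is measurable for each \(A\in\cB(\real^d)\), so approximating \(\psi_B\) by simple functions gives Borel measurability of \(x\mapsto\overline\Pi(x,B)=\int\psi_B\,\diff\Pi(x,\cdot)\).

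For the double-integrated radial tail I would run the same scheme with one additional Tonelli step: collapsing the inner double \(z\)-integral and swapping with \(\Pi_\xi(x,\diff r)\) gives, via the polar integration formula,
\[
\overline{\overline\Pi}(x,B)=\int_{\real^d}\widetilde\psi_B(y)\,\Pi(x,\diff y),\qquad \widetilde\psi_B(y):=\int_0^{|y|}\bone_B\big(z\,y/|y|\big)\,(|y|-z)\,\diff z
\]
(with \(\widetilde\psi_B(0):=0\)), from which independence, the measure property, \(\overline{\overline\Pi}(x,\{0\})=0\), and measurability in \(x\) follow exactly as above. For \(\sigma\)-finiteness one checks \(\widetilde\psi_{\{a\le|y|\le b\}}(y)\le(b-a)\,|y|\,\bone_{\{|y|>a\}}\), giving \(\overline{\overline\Pi}\big(x,\{a\le|y|\le b\}\big)\le(b-a)\int_{\{|y|>a\}}|y|\,\Pi(x,\diff y)\), and the latter integral is finite under the standing assumption \(\int_{\real^d}(|y|\wedge|y|^2)\,\Pi(x,\diff y)<\infty\) because on \(\{|y|\ge1\}\) the integrand is \(|y|\wedge|y|^2\) while on \(\{a<|y|<1\}\) one has \(|y|\le a^{-1}|y|^2\).

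The step I expect to be the main obstacle is the Borel measurability in \(x\): nothing guarantees that \(x\mapsto(\Pi_o(x,\cdot),(\Pi_\xi(x,\cdot)))\) can be chosen measurably, so a direct argument through the decomposition is not available. The point of the first paragraph is precisely to sidestep this by producing the polar-decomposition-free identities \(\overline\Pi(x,\cdot)=\int\psi_B\,\diff\Pi(x,\cdot)\) and \(\overline{\overline\Pi}(x,\cdot)=\int\widetilde\psi_B\,\diff\Pi(x,\cdot)\); after that, everything reduces to Tonelli's theorem and monotone convergence, and the only care needed is the elementary verification that the relevant annuli have finite \(\overline\Pi\)- resp.\ \(\overline{\overline\Pi}\)-measure.
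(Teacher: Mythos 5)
Your proposal is correct and follows essentially the same route as the paper: the key step in both is to use Tonelli's theorem to rewrite \(\overline\Pi(x,B)\) (and \(\overline{\overline\Pi}(x,B)\)) as an integral against \(\Pi(x,\cdot)\) of a kernel that does not involve the polar decomposition, from which independence of the decomposition, the measure properties, and Borel measurability in \(x\) via simple-function approximation all follow. The only cosmetic difference is that the paper obtains \(\sigma\)-finiteness from the local boundedness of the radial tails \(z\mapsto\Pi_\xi(x,(z,\infty))\) before passing to the decomposition-free formula, whereas you read it off afterwards from explicit annulus bounds; both are routine and equivalent in substance.
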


\begin{proof}
	Fix \(x\in\real^d\), then clearly \(\overline\Pi(x,\{0\})=0\), and for all \(\xi\in S^{d-1}\) the radial part \(\Pi_\xi(x,\cdot)\) is a one-dimensional Lévy measure. Therefore, \(z\mapsto\int_{(z,\infty)} \Pi_\xi(x,\diff r)\) is a locally bounded function on \((0,\infty)\). Hence, \(\overline\Pi_\xi(x,B):=\int_B\int_{(z,\infty)} \Pi_\xi(x,\diff r)\,\diff z\) is a \(\sigma\)-finite measure for fixed \(\xi\in S^{d-1}\) and, as $\Pi_o$ is a finite measure, $\overline\Pi(x, \diff z)$ is also a $\sigma$-finite measure.\\
	For any fixed \(B\in\cB(\real^d)\) by Tonelli's theorem we derive
	\begin{align*}
		\overline\Pi(x,B)%&=\int_{S^{d-1}}\int_{(0,\infty)} \bone_{B}(z\xi) \int_{(z,\infty)} \Pi_\xi(x,\diff r)\, \diff z\, \Pi_{o}(x,\diff \xi)\\
		&=\int_{S^{d-1}}\int_{(0,\infty)} \int_{(0,r)}\bone_{B}(z\xi) \diff z \, \Pi_\xi(x,\diff r)\, \Pi_{o}(x,\diff \xi)\\
		&=\int_{\real^d} \int_{(0,1]}\bone_{B}\big(z y \big) \diff z \, \Pi(x,\diff y)
	\end{align*}
	which is independent of the choice of the polar decomposition.\\ Further, approximating \(f:y\mapsto \int_{(0,1]}\bone_{B}(z  y) \diff z\) by simple functions, the measurability of \(\overline\Pi(\cdot,B)\) now follows from the measurability of \(\Pi(\cdot,B)\) and \(f\), and the monotone convergence theorem.\\
	For the double-integrated radial tail simply note that, due to \(\int_{\real^d}(|y|\wedge |y|^2) \Pi(x,\diff y)<\infty\) for all \(x\in\real^d\), we obtain
	\begin{align*}
	\int_{(z_2,\infty)}  \int_{(z_1,\infty)} \Pi_\xi(x,\diff r)\diff z_1= \int_{(z_2,\infty)} \int_{(0,r)} \diff z_1 \Pi_\xi(x,\diff r) =\int_{(z_2,\infty)} r \Pi_\xi(x,\diff r)<\infty 
	\end{align*}
	for all \(z_2>0\) and \(\Pi_o(x,\cdot)\)-a.e. \(\xi\in S^{d-1}\). The rest follows analogously.
\end{proof}

We are now in the position to formulate the desired representation of the operator \(\cA\).

\begin{lemma} \label{lem_genrep}
Let \((\cA,\domain)\) be a Lévy-type operator with characteristic triplet \((a, b, \Pi)\) and convenient decomposition \(\pi=(\nu,\mu,\rho)\) of the Lévy kernel \(\Pi\). Then for any \(f\in\cC^2_c(\real^d)\cap\domain\) it holds
\begin{align*}
\cA f(x)= &~ (a(x)+a_\pi(x))\cdot \nabla f(x)+ \frac12 \nabla \cdot b(x)\nabla f(x) \\
&+ \int_{\real^d} f(x+y)\nu(x,\diff y) - f(x)\nu(x,\real^d) \\
&+ \int_{\real^d} \partial_{y/|y|} f(x+y) {\overline\mu}(x,\diff y) 
+ \int_{\real^d} \partial^2_{y/|y|} f(x+y) \overline{\overline\rho}(x,\diff y),
\end{align*}
where \(a_\pi(x)= -\int_{|y|<1}y\nu(x,\diff y)- \int_{|y|<1}y\mu(x,\diff y)+\int_{|y|\geq1}y\rho(x,\diff y)\).
\end{lemma}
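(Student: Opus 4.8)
The plan is to start from the defining formula \eqref{eq-operator} for $\cA f(x)$ and manipulate the integral term, using the convenient decomposition $\Pi = \nu + \mu + \rho$ to split it into three pieces that are treated separately according to the integrability each kernel enjoys. For a fixed $f \in \cC^2_c(\real^d)\cap\domain$ the whole integrand $f(x+y)-f(x)-\nabla f(x)\cdot y\bone_{\{|y|<1\}}$ is bounded by a constant times $1\wedge|y|^2$ uniformly in $y$, so every integral below converges absolutely and Fubini/Tonelli manipulations are justified.

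First I would handle the large jump kernel $\nu = h_\nu\Pi$. Since $\int \nu(x,\diff y) < \infty$, I can split $\int_{\real^d}(f(x+y)-f(x)-\nabla f(x)\cdot y\bone_{\{|y|<1\}})\nu(x,\diff y)$ into $\int f(x+y)\nu(x,\diff y) - f(x)\nu(x,\real^d) - \nabla f(x)\cdot\int_{|y|<1}y\,\nu(x,\diff y)$; the last term contributes $-\int_{|y|<1}y\,\nu(x,\diff y)$ to $a_\pi(x)$ and gets absorbed into the gradient term. Next, for the medium jump kernel $\mu = h_\mu\Pi$, where $\int(1\wedge|y|)\mu(x,\diff y)<\infty$, I would write $f(x+y)-f(x) = \int_0^1 \nabla f(x+ty)\cdot y\,\diff t$ and pass to polar coordinates $y = r\xi$, so that $\nabla f(x+ty)\cdot y = r\,\partial_{\xi} f(x+tr\xi)$; substituting $z = tr$ and using Tonelli (legitimate because $\int(1\wedge r)\mu_\xi(x,\diff r)<\infty$ and $\nabla f$ is bounded) converts the $t$-integral into $\int_{(0,r)}\partial_\xi f(x+z\xi)\,\diff z$, and interchanging the order of the $z$- and $r$-integrations produces exactly $\int \partial_{y/|y|} f(x+y)\,\overline\mu(x,\diff y)$ by the very definition of the integrated radial tail; the compensator $-\nabla f(x)\cdot y\bone_{\{|y|<1\}}$ again peels off and joins $a_\pi(x)$ with a minus sign. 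For the small jump kernel $\rho = h_\rho\Pi$, where $\int(|y|\wedge|y|^2)\rho(x,\diff y)<\infty$, I would first subtract and add back $\nabla f(x)\cdot y\bone_{\{|y|\ge1\}}$ so that the remaining integrand is $f(x+y)-f(x)-\nabla f(x)\cdot y$, giving a contribution $+\int_{|y|\ge1}y\,\rho(x,\diff y)$ to $a_\pi(x)$; then Taylor's theorem with integral remainder, $f(x+y)-f(x)-\nabla f(x)\cdot y = \int_0^1(1-t)\,y^\top\hess f(x+ty)\,y\,\diff t$, combined with the polar substitution $y = r\xi$ so that $y^\top\hess f(x+ty)\,y = r^2\,\partial^2_\xi f(x+tr\xi)$, and two successive changes of variables / Tonelli interchanges (using $\int r^2\wedge r\,\rho_\xi(x,\diff r)<\infty$ and boundedness of $\hess f$) reproduce the double-integrated radial tail, yielding $\int \partial^2_{y/|y|} f(x+y)\,\overline{\overline\rho}(x,\diff y)$.

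Finally I would collect the three leftover drift corrections into $a_\pi(x) = -\int_{|y|<1}y\,\nu(x,\diff y) - \int_{|y|<1}y\,\mu(x,\diff y) + \int_{|y|\ge1}y\,\rho(x,\diff y)$, add it to $a(x)\cdot\nabla f(x)$, and leave the second-order term $\tfrac12\nabla\cdot b(x)\nabla f(x)$ untouched, which assembles the claimed identity. The main obstacle is bookkeeping the polar-coordinate and change-of-variables steps for $\mu$ and $\rho$ so that they match the precise iterated-integral definitions of $\overline\mu$ and $\overline{\overline\rho}$ in Definition~\ref{def_inttail}, and verifying the absolute integrability needed to apply Tonelli at each interchange; the Taylor remainder bounds $|f(x+y)-f(x)-\nabla f(x)\cdot y\bone_{\{|y|<1\}}| \le C(1\wedge|y|^2)$ together with the convenient-decomposition integrability conditions (i)--(iii) are exactly what make all of this rigorous, and the independence of the result from the chosen polar decomposition follows from the corresponding statement in the preceding Lemma.
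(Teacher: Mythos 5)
Your proposal is correct and follows essentially the same route as the paper: split the jump integral according to the convenient decomposition, absorb the leftover compensator pieces into $a_\pi$, and convert the $\mu$- and $\rho$-integrals into the (double-)integrated radial tails via polar coordinates and Fubini--Tonelli, justified by the integrability conditions (i)--(iii) and a polar decomposition chosen as in Lemma~\ref{cor_levydec}. Your use of the Taylor remainder $\int_0^1(1-t)\,y^\top\hess f(x+ty)\,y\,\diff t$ is just a reparametrized form of the paper's iterated identity $f(x+r\xi)-f(x)-\nabla f(x)\cdot r\xi=\int_{(0,r)}\int_{(0,z_1)}\partial^2_\xi f(x+z_2\xi)\,\diff z_2\,\diff z_1$, so the arguments coincide up to a change of variables.
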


\begin{proof}
Inserting the convenient decomposition of \(\Pi\) on \eqref{eq-operator} yields
\begin{align*}
\cA f(x)&=a(x)\cdot \nabla f(x) + \frac12 \nabla \cdot b(x)\nabla f(x) \\
&\quad +\int_{\real^d}\left(f(x+y)-f(x)\right)\nu(x,\diff y) - \int_{|y|<1}y\nu(x,\diff y) \cdot \nabla f(x)\\
&\quad +\int_{\real^d}\left(f(x+y)-f(x)\right)\mu(x,\diff y) - \int_{|y|<1}y\mu(x,\diff y)\cdot \nabla f(x) \\
&\quad +\int_{\real^d}\left(f(x+y)-f(x)-\nabla f(x)\cdot y\right)\rho(x,\diff y) +\int_{|y|\geq1}y\rho(x,\diff y)\cdot \nabla f(x) \\
&:= (a(x)+a_\pi(x))\cdot \nabla f(x) + \frac12 \nabla \cdot b(x)\nabla f(x)  + I_\nu + I_\mu + I_\rho,
\end{align*}
where the integral terms of \(a_\pi\) are finite for all \(x\in\real^d\) due to the assumptions on \(\nu,\mu\) and \(\rho\). \\
First, since \(\nu(x,\real^d)<\infty\) for all \(x\in\real^d\), we immediately obtain
\begin{align*}
I_\nu=\int_{\real^d} f(x+y)\nu(x,\diff y) - f(x)\nu(x,\real^d).
\end{align*} 
Next, consider \(I_\mu\). The integrand's argument \(y\in \real^d\setminus\{0\}\) can be uniquely written as \(y=r\xi\) for some \(r>0\) and \(\xi\in S^{d-1}\). Using a polar decomposition \((\mu_o,(\mu_\xi)_{\xi\in S^{d-1}})\) of \(\mu\), and the fact that \(f(x+r\xi)-f(x)=\int_{(0,r)}\partial_\xi f(x+z\xi)\diff z\), this yields
\begin{align}\nonumber
I_\mu&=\int_{S^{d-1}}\int_{(0,\infty)}(f(x+r\xi)-f(x))\mu_\xi(x,\diff r)\,\mu_{o}(x,\diff \xi)\\\nonumber
&=\int_{S^{d-1}}\int_{(0,\infty)}\int_{(0,r)} \partial_\xi f(x+z\xi) \diff z \,\mu_\xi(x,\diff r)\,\mu_{o}(x,\diff \xi)\\\label{eq_polarc}
&=\int_{S^{d-1}}\int_{(0,\infty)}\partial_\xi f(x+z\xi) \int_{(z,\infty)}  \mu_\xi(x,\diff r)\,\diff z\,\mu_{o}(x,\diff \xi),
\end{align}
where we applied Fubini's theorem to interchange the two inner integrals.
This is possible since \(\partial_\xi f\) is bounded and with compact support, i.e. for fixed \(x\in\real^d\) and \(\xi\in S^{d-1}\) there are constants \(C,C'>0\) such that \(|\partial_\xi f(x+z\xi)|<C' \bone_{\{|z|<C\}}\). Therefore, to show \(I_\mu<\infty\), it suffices to verify that the area of integration
\begin{align*}
B_1:=\left\{(r,z)\in\real^2: 0< z< r <C\right\}
\end{align*}
has finite mass w.r.t. the product measure \(\mu_\xi (x,\cdot) \times \lambda\), where \(\lambda\) denotes the Lebesgue measure. This however follows due to the fact that \(\int_{\real^d} |y| \mu(x,\diff y)<\infty\) which implies by Lemma \ref{cor_levydec} that $\int_{(0,\infty)} r \mu_\xi(x,\diff r)<\infty$ and hence 
$$\int_{B_1} \big(\mu_\xi (x,\diff r) \times \diff z\big) = \int_{(0,C)} r \mu_\xi (x,\diff r)<\infty.$$ 
Lastly, converting \(z\xi\) back into Cartesian coordinates in \eqref{eq_polarc} yields the desired result, namely
\begin{align*}
I_\mu = \int_{y\neq0} \partial_{y/|y|} f(x+y) {\overline\mu}(x,\diff y).
\end{align*}
It remains to rewrite \(I_\rho\). Again, writing \(y=r\xi\) with \(r>0\) and \(\xi\in S^{d-1}\), we observe that
\begin{align*}
f(x+y)-f(x)-\nabla f(x)\cdot y&= f(x+r\xi)-f(x)-\nabla f(x)\cdot (r\xi) \\
&= \int_{(0,r)} (\partial_\xi f(x+z_1\xi) - \partial_\xi f(x) ) \diff z_1 \\
&=\int_{(0,r)}\int_{(0,z_1)} \partial^2_\xi f(x+z_2\xi) \diff z_2 \,\diff z_1.
\end{align*}
We obtain
\begin{align*}
I_\rho&=\int_{S^{d-1}}\int_{(0,\infty)}(f(x+r\xi)-f(x)-\nabla f(x)\cdot (r\xi))\rho_\xi(x,\diff r)\,\rho_{o}(x,\diff \xi)\\
&=\int_{S^{d-1}}\int_{(0,\infty)}\int_{(0,r)}\int_{(0,z_1)} \partial^2_\xi f(x+z_2\xi) \diff z_2 \,\diff z_1\,\rho_\xi(x,\diff r)\,\rho_{o}(x,\diff \xi)\\
&=\int_{S^{d-1}}\int_{(0,\infty)}\partial^2_\xi f(x+z_2\xi) \int_{(z_2,\infty)}\int_{(z_1,\infty)} \rho_\xi(x,\diff r) \,\diff z_1\, \diff z_2\, \rho_{o}(x,\diff \xi),
\end{align*}
where we again used Fubini's theorem, the applicability of which is shown by a similar argumentation as above:  The area of integration
\[
B_2:=\{(r,z_1,z_2)\in\real^3: 0< z_2< z_1< r< C\}
\]
is finite w.r.t the product measure \(\rho_\xi(x,\cdot)\times \lambda\times\lambda\) since \(\int_{(0,\infty)} (r \wedge r^2)\rho_\xi(x,\diff r)<\infty\) for all \(x\in\real^d\) due to Lemma \ref{cor_levydec}. 
By the assumption on \(\rho\), the double-integrated radial tail \(\overline{\overline\rho}\) of \(\rho\) is well-defined. Converting \(z_2\xi\) back into Cartesian coordinates thereby yields
\begin{align*}
I_\rho=\int_{y\neq0} \partial^2_{y/|y|} f(x+y) \overline{\overline\rho}(x,\diff y),
\end{align*}
and the proof is complete.
\end{proof}

In the following, if we use the notation \(a_\pi\), we will always mean the function defined in Lemma~\ref{lem_genrep}.

%%%%%%%%%%%%%%%%%%%%%%%%%%%%%%%%%%%%%%%%%%%%%%%%%%%%%%%%%%%%%%%%%%%%%%%%%%%%%%%%%%%%%%%%%%%%%%%%%%%%%%%%%%%%%%%%%%%%%%%%%%%%%%%%%%%%%%%%%%%%%%%%%%%%%%%%%%%%%%%%%%%%%%%%%%%%%%%%%%%%%%%%%%%%%%%%%%%%%%%%%%%%%%%%%%%%%%%%%%%%%%%%%%%%%%%%%%%%
\section{Infinitesimally invariant measures of Lévy-type operators}\label{sec_infiinv}
\setcounter{equation}{0}
%%%%%%%%%%%%%%%%%%%%%%%%%%%%%%%%%%%%%%%%%%%%%%%%%%%%%%%%%%%%%%%%%%%%%%%%%%%%%%%%%%%%%%%%%%%%%%%%%%%%%%%%%%%%%%%%%%%%%%%%%%%%%%%%%%%%%%%%%%%%%%%%%%%%%%%%%%%%%%%%%%%%%%%%%%%%%%%%%%%%%%%%%%%%%%%%%%%%%%%%%%%%%%%%%%%%%%%%%%%%%%%%%%%%%%%%%%%%

%%%%%%%%%%%%%%%%%%%%%%%%%%%%%%%%%%%%%%%%%%%%%%%%%%%%%%%%%%%%%%%%%%%%%%%%%%%%%%%%%%%%%%%%%%%%%%%%%%%%%%%%%%%%%%%%%%%%%%%%%%%%%%%%%%%%%%%%%%%%%%%%%%%%%%%%%%%%%%%%%%%%%%%%%%%%%%%%%%%%%%%%%%%%%%%%%%%%%%%%%%%%%%%%%%%%%%%%%%%%%%%%%%%%%%%%%%%%
\subsection{A distributional equation for infinitesimally invariant measures}
%%%%%%%%%%%%%%%%%%%%%%%%%%%%%%%%%%%%%%%%%%%%%%%%%%%%%%%%%%%%%%%%%%%%%%%%%%%%%%%%%%%%%%%%%%%%%%%%%%%%%%%%%%%%%%%%%%%%%%%%%%%%%%%%%%%%%%%%%%%%%%%%%%%%%%%%%%%%%%%%%%%%%%%%%%%%%%%%%%%%%%%%%%%%%%%%%%%%%%%%%%%%%%%%%%%%%%%%%%%%%%%%%%%%%%%%%%%%

  Let \((\cA,\domain)\) be a Lévy-type operator  with characteristic triplet \((a,b,\Pi)\)  as defined in \eqref{eq-operator}, and assume \(\test\subset\domain\). Given a convenient decomposition \(\pi=(\nu,\mu,\rho)\) of \(\Pi\) it    then    follows from Lemma \ref{lem_genrep} and the definition of infinitesimal invariance that a measure \(\eta\) is infinitesimally invariant for \((\cA,\domain)\) if and only if
\begin{align}\label{eq_genequ}
\begin{split}
0=&~\int_{\real^d} \Big((a(x)+a_\pi(x))\cdot \nabla f(x) +  \frac12 \nabla \cdot b(x)\nabla f(x)\Big)\eta(\diff x)  \\
&+\int_{\real^d}\left( \int_{\real^d} f(x+y)\nu(x,\diff y)-f(x)\nu(x,\real^d)\right) \eta(\diff x)\\
&+\int_{\real^d} \int_{\real^d} \partial_{y/|y|} f(x+y) {\overline\mu}(x,\diff y)\, \eta(\diff x)\\
&+\int_{\real^d}\int_{\real^d} \partial^2_{y/|y|} f(x+y) \overline{\overline\rho}(x,\diff y)\, \eta(\diff x)
\end{split}
\end{align}
for all \(f\in\test\subset \cC^2_c(\real^d)\cap\domain\). \\
From this we derive a distributional equation of \(\eta\) in Theorem \ref{thm_distEQ} below. In order to state our result in a compact way we first introduce the following notation.

\begin{definition}\label{def_jumpfunc}
Let \((\cA,\domain)\) be a Lévy-type operator in \(\real^d\), and let   \(\pi=(\nu,\mu,\rho)\)    be a convenient decomposition of \(\Pi\). Let \(\eta\) be a measure on \(\real^d\). We define the \textbf{jump functionals} \(U^\pi(\eta),V^\pi(\eta)=(V_i^\pi(\eta))_{i=1,\ldots,d}\), and \(W^\pi(\eta)=(W_{ij}^\pi(\eta))_{i,j=1,\ldots, d}\) of \(\Pi\) w.r.t.   \(\pi\) and  \(\eta\), by setting 
\begin{align*}
\langle U^\pi(\eta),f \rangle &:=\int_{\real^d}\left( \int_{\real^d} f(x+y)\nu(x,\diff y)-f(x)\nu(x,\real^d)\right) \eta(\diff x),\\
\langle V_{i}^\pi(\eta),f\rangle&:=\int_{\real^d}\int_{\real^d} f(x+y) \frac{y_i}{|y|}\,\overline\mu(x,\diff y)\, \eta(\diff x), \quad i=1,\ldots, d,\\
\langle W_{ij}^\pi(\eta),f\rangle &:=\int_{\real^d} \int_{\real^d}f(x+y)\frac{y_iy_j}{|y|^2}\, \overline{\overline\rho}(x,\diff y)\, \eta(\diff x), \quad i,j=1,\ldots, d,
\end{align*}
for all \(f\in\test\) for which the respective right-hand side is well-defined. \\  
 Moreover, we set 
 \begin{align*}
 	\cM(\cA,\pi)&:=\big\{\text{measures }\eta \text{ on }\real^d \text{ s.t. } \nabla \cdot ((a+a_\pi)\eta), {\frac12 \nabla \cdot b\nabla\eta}, U^\pi(\eta), {\nabla\cdot V^\pi(\eta)},\\
 	& \qquad  \text{ and  } \hess \cdot W^\pi(\eta) \text{ define distributions} \big\}.\end{align*}
\end{definition}

We are now ready to state the main result of this section.

\begin{theorem}\label{thm_distEQ}
Let \((\cA,\domain)\) be a Lévy-type operator in \(\real^d\) with characteristic triplet \((a,b,\Pi)\), and assume that \(\test\subset\domain\). Let \(\pi=(\nu,\mu,\rho)\)    be a convenient decomposition of \(\Pi\). A measure \(\eta\in \cM(\cA,\pi)\) is infinitesimally invariant for \((\cA,\domain)\) if and only if \(\eta\) solves the distributional equation
\begin{align}\label{eq_distributional}
- \nabla \cdot ((a+a_\pi)\eta)+ \frac12 \nabla \cdot b\nabla\eta  + U^\pi(\eta) +\nabla \cdot V^\pi(\eta) + \hess \cdot W^\pi(\eta) =0,
\end{align}
where \(U^\pi(\eta), V^\pi(\eta)\) and \(W^\pi(\eta)\) denote the jump functionals of \(\Pi\) w.r.t.   \(\pi\) and    \(\eta\).
\end{theorem}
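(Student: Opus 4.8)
The plan is to start from the characterization \eqref{eq_genequ} of infinitesimal invariance, which is already available thanks to Lemma~\ref{lem_genrep}, and to rewrite each of the five terms on its right-hand side as the pairing of a distribution with the test function $f$. The key observation is that for $f\in\test$ the integrals appearing in \eqref{eq_genequ} involve derivatives of $f$, and integration by parts in the distributional sense moves these derivatives onto $\eta$ (or onto the jump functionals), producing exactly the five summands in \eqref{eq_distributional}. Concretely, I would proceed term by term as follows.

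\textbf{Drift and diffusion terms.} For the first-order term, $\int (a+a_\pi)\cdot\nabla f\,\diff\eta = \langle (a+a_\pi)\eta,\nabla f\rangle = -\langle \nabla\cdot((a+a_\pi)\eta),f\rangle$ by the definition of the distributional divergence; this is legitimate precisely because $\eta\in\cM(\cA,\pi)$ guarantees $\nabla\cdot((a+a_\pi)\eta)\in\cD'$. Similarly $\int \tfrac12\nabla\cdot b(x)\nabla f(x)\,\eta(\diff x) = \tfrac12\sum_{i,j}\langle b_{ij}\eta,\partial_i\partial_j f\rangle = \langle \tfrac12\hess\cdot(b\eta),f\rangle = \langle \tfrac12\nabla\cdot b\nabla\eta,f\rangle$, again using membership in $\cM(\cA,\pi)$. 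For the large-jump part, the term $\int(\int f(x+y)\nu(x,\diff y)-f(x)\nu(x,\real^d))\eta(\diff x)$ is by Definition~\ref{def_jumpfunc} exactly $\langle U^\pi(\eta),f\rangle$, with no integration by parts needed.

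\textbf{Medium- and small-jump terms.} Here one rewrites $\partial_{y/|y|}f(x+y) = \sum_i \frac{y_i}{|y|}\partial_i f(x+y)$, so the medium-jump term equals $\sum_i\int\int \partial_i f(x+y)\frac{y_i}{|y|}\overline\mu(x,\diff y)\eta(\diff x) = \sum_i\langle V_i^\pi(\eta),\partial_i f\rangle = -\langle \nabla\cdot V^\pi(\eta),f\rangle$; analogously the small-jump term, using $\partial^2_{y/|y|}f(x+y)=\sum_{i,j}\frac{y_iy_j}{|y|^2}\partial_i\partial_j f(x+y)$, becomes $\sum_{i,j}\langle W_{ij}^\pi(\eta),\partial_i\partial_j f\rangle = \langle \hess\cdot W^\pi(\eta),f\rangle$. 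Summing all five identities shows that \eqref{eq_genequ} holds for all $f\in\test$ if and only if the distribution on the left of \eqref{eq_distributional} annihilates every $f\in\test$, which (as $\test$ is dense in the relevant sense and a distribution vanishing on all test functions is the zero distribution) is equivalent to \eqref{eq_distributional}. The ``only if'' and ``if'' directions are then both immediate from this chain of equalities, given that $\eta\in\cM(\cA,\pi)$ makes every object well-defined.

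\textbf{Main obstacle.} The routine part is the integration by parts; the delicate point is bookkeeping the well-definedness and interchange-of-integration issues packaged into the hypothesis $\eta\in\cM(\cA,\pi)$. One must check that the iterated integrals defining $V^\pi(\eta)$ and $W^\pi(\eta)$ applied to $\partial_i f$ resp.\ $\partial_i\partial_j f$ really reproduce the original medium/small-jump integrals in \eqref{eq_genequ} — i.e.\ that Fubini applies so that the order of integration over $x$ (against $\eta$) and over $y$ (against $\overline\mu$, $\overline{\overline\rho}$) may be exchanged, and that the sums over $i,j$ can be pulled through. This is where compact support and boundedness of the derivatives of $f$, together with the finiteness properties of a convenient decomposition from Definition~\ref{def_decompose} and Lemma~\ref{lem_genrep}, are used; I expect the bulk of the writing to be verifying these integrability conditions rather than any conceptual difficulty, and the membership $\eta\in\cM(\cA,\pi)$ is exactly the hypothesis that lets us assert each resulting expression is a genuine element of $\cD'$ so that \eqref{eq_distributional} makes sense as an identity of distributions.
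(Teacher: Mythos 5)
Your overall strategy is the same as the paper's: start from \eqref{eq_genequ} and rewrite each of the five summands as the pairing of a distribution with \(f\), with membership in \(\cM(\cA,\pi)\) guaranteeing that every resulting object is a genuine distribution; the drift, large-jump and small-jump terms are handled exactly as in the paper's proof. However, two of your term-by-term identities are flawed as written. For the diffusion term, the paper reads \(\frac12\nabla\cdot b(x)\nabla f(x)\) in divergence form, \(\frac12\sum_{i,j}\partial_i\bigl(b_{ij}\partial_j f\bigr)\), and obtains \(\frac12\langle\nabla\cdot b\nabla\eta,f\rangle\) by two distributional integrations by parts, using the symmetry of \(b\) and the product rule of Lemma \ref{lem_abcM}. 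You instead expand it as \(\frac12\sum_{i,j}\langle b_{ij}\eta,\partial_i\partial_j f\rangle=\langle\frac12\hess\cdot(b\eta),f\rangle\) and then identify \(\hess\cdot(b\eta)\) with \(\nabla\cdot b\nabla\eta\). That identification is false in general: pairing both with \(f\) shows they differ by \(\sum_{i,j}\int (\partial_j b_{ij})\,\partial_i f\,\diff\eta\), which does not vanish for non-constant \(b\). So whichever way the second-order term of \eqref{eq-operator} is read, one link in your chain fails; the fix is the paper's direct double integration by parts on the divergence-form expression, without the detour through \(\hess\cdot(b\eta)\).

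The second issue is a sign. You write \(\sum_i\langle V_i^\pi(\eta),\partial_i f\rangle=-\langle\nabla\cdot V^\pi(\eta),f\rangle\) (the standard convention for the distributional divergence), yet claim that summing the five identities yields \eqref{eq_distributional}, which carries \(+\nabla\cdot V^\pi(\eta)\). With your convention the medium-jump term contributes \(-\nabla\cdot V^\pi(\eta)\), so you would prove the equation with the opposite sign on that term. The paper's proof instead pairs \(\langle\nabla\cdot V^\pi(\eta),f\rangle=\sum_i\langle V_i^\pi(\eta),\partial_i f\rangle\) (no minus), which is what makes the stated sign come out; you must either adopt that convention explicitly or adjust the sign in the equation you derive. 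Two minor remarks: no Fubini argument is needed to match the \(V\)- and \(W\)-terms with \eqref{eq_genequ}, since the iterated integrals already appear in the same order as in Definition \ref{def_jumpfunc}; and the identity \(\partial^2_{y/|y|}f(x+y)=\sum_{i,j}\frac{y_iy_j}{|y|^2}\partial_i\partial_jf(x+y)\), which you assert without comment, is exactly what the paper verifies in \eqref{eq_2nddirdiff}, where the extra product-rule terms are shown to vanish.
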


Note that for measures $\eta \notin \cM(\cA,\pi)$ we cannot interpret \eqref{eq_distributional} in a distributional sense. Consequently, the criterion fails for infinitesimally invariant measures \(\eta\notin\cM(\cA,\pi)\). However, in many cases mild regularity assumptions on the characteristic triplet \((a,b,\Pi)\) already guarantee that \(\cM(\cA,\pi)\) contains a sufficiently large class of measures. 
General conditions for a measure $\eta$ to be contained in $\cM(\cA,\pi)$ will be discussed in Section \ref{sec_cond} below.

\begin{proof}[Proof of Theorem \ref{thm_distEQ}]
  
We consider \eqref{eq_genequ} and transform the summands one-by-one.    First of all, abbreviating \(\tilde{a}(x):=a(x)+a_\pi(x) \) we observe
\begin{align}\nonumber
 \int_{\real^d} \tilde a(x) \cdot \nabla f(x)\, \eta(\diff x)&=\sum_{i=1}^d \int_{\real^d} \tilde a_i(x) \partial_i f(x) \,\eta(\diff x)\\
 &= - \sum_{i=1}^d \int_{\real^d} f(x) \partial_i \big(\tilde{a}_i(x) \,\eta(\diff x) \big)
\label{eq_a}
=-\langle \nabla \cdot(\tilde a\eta), f\rangle,
\end{align}
where the latter is by assumption a distribution. Likewise  \begin{align}\nonumber
\int_{\real^d} \frac12 \nabla \cdot b(x)\nabla f(x) \eta(\diff x)&= \sum_{i,j=1}^d \int_{\real^d}  \frac12 \partial_i \big( b_{ij}(x) \partial_j f(x)\big) \,\eta(\diff x)\\
\label{eq_b}
&= \frac12 \langle \nabla \cdot b^T\nabla\eta , f\rangle=\frac12 \langle \nabla \cdot b\nabla\eta , f\rangle,
\end{align}
and again this is a distribution.\\
Since \(U^\pi(\eta)\) appears in both \eqref{eq_genequ} and \eqref{eq_distributional} in the same form and is by assumption a distribution, it needs no further discussion. \\ 
For the term in the third line in \eqref{eq_genequ} we note that by definition $\partial_{y/|y|} f(x+y)  = \sum_{i=1}^d \partial_i f(x+y)\frac{y_i}{|y|}$ and hence 
\begin{align}\nonumber
\int_{\real^d} \int_{\real^d} \partial_{y/|y|} f(x+y)\, {\overline\mu}(x,\diff y) \,\eta(\diff x) & = \sum_{i=1}^d \int_{\real^d} \int_{\real^d} \partial_i f(x+y) \frac{y_i}{|y|}\, {\overline\mu}(x,\diff y)\, \eta(\diff x) \\
&= \sum_{i=1}^d \langle V_i^\pi(\eta), \partial_i f \rangle \label{eq_V}
= \langle \nabla \cdot V^\pi(\eta), f\rangle.
\end{align}
Here, the second line is well-defined as \(\nabla \cdot V^\pi(\eta)\) is a distribution by assumption. \\
To consider the final term, we first observe that
\begin{align}
	\partial^2_{y/|y|} f(x+y)&=\sum_{j=1}^d\sum_{i=1}^d \partial_j\Big(\partial_i f(x+y) \frac{y_i}{|y|}\Big)\frac{y_j}{|y|} \nonumber \\
	&= \sum_{j=1}^d\sum_{i=1}^d \Big(\partial_j\partial_i f(x+y) \Big) \frac{y_iy_j}{|y|^2} + \sum_{j=1}^d\sum_{i=1}^d \partial_i f(x+y) \Big(\partial_j \frac{y_i}{|y|}\Big)\frac{y_j}{|y|} \label{eq_2nddirdiff}
\end{align}
where
$$\partial_j\frac{y_i}{|y|}= \begin{cases}
	\frac{\sum_{\ell\neq i} y_\ell^2}{|y|^3}, & i=j,\\
	-\frac{y_i y_j}{|y|^3}, & i\neq j,
\end{cases}$$
which implies that the second term in \eqref{eq_2nddirdiff} is $0$. Thus 
\begin{align}\nonumber
\int_{\real^d}\int_{\real^d} \partial^2_{y/|y|} f(x+y)\, \overline{\overline\rho}(x,\diff y)\, \eta(\diff x) &= \sum_{i,j=1}^d\int_{\real^d} \int_{\real^d}\partial_j\partial_i f(x+y)\frac{y_iy_j}{|y|^2}\,\overline{\overline\rho}(x,\diff y)\, \eta(\diff x) \\ 
&= \sum_{i,j=1}^d \langle W^\pi_{ij}(\eta), \partial_i\partial_j f\rangle \label{eq_W}
= \langle \hess \cdot W^\pi(\eta),f\rangle.
\end{align}
Again this is well-defined as $\hess \cdot W^\pi(\eta)$ is a distribution by assumption. \\
   Summing up \eqref{eq_a}, \eqref{eq_b}, \eqref{eq_V}, \eqref{eq_W} and \(U(\eta)\) thus yields the equivalence of \eqref{eq_genequ} and \eqref{eq_distributional} and finishes the proof.   
\end{proof}

%%%%%%%%%%%%%%%%%%%%%%%%%%%%%%%%%%%%%%%%%%%%%%%%%%%%%%%%%%%%%%%%%%%%%%%%%%%%%%%%%%%%%%%%%%%%%%%%%%%%%%%%%%%%%%%%%%%%%%%%%%%%%%%%%%%%%%%%%%%%%%%%%%%%%%%%%%%%%%%%%%%%%%%%%%%%%%%%%%%%%%%%%%%%%%%%%%%%%%%%%%%%%%%%%%%%%%%%%%%%%%%%%%%%%%%%%%%%
\subsection{Volterra-Fredholm integral equations}\label{sec_onedim}
%%%%%%%%%%%%%%%%%%%%%%%%%%%%%%%%%%%%%%%%%%%%%%%%%%%%%%%%%%%%%%%%%%%%%%%%%%%%%%%%%%%%%%%%%%%%%%%%%%%%%%%%%%%%%%%%%%%%%%%%%%%%%%%%%%%%%%%%%%%%%%%%%%%%%%%%%%%%%%%%%%%%%%%%%%%%%%%%%%%%%%%%%%%%%%%%%%%%%%%%%%%%%%%%%%%%%%%%%%%%%%%%%%%%%%%%%%%%

In this section we will embed the distributional equation \eqref{eq_distributional} into the theory of Volterra-Fredholm integral equations: 
Under the restriction that $d=1$, in Theorem \ref{thm_vfie} we present integral kernels \(\kappa_{1,2}: \real\times \real\to\real\) and functions \(F_{1,2}:\real\to \real\), such that \eqref{eq_distributional} is equivalent to the \textbf{Volterra-Fredholm integral equation}
\begin{align}\label{eq_vfie_template}
F_1(x)\eta(\diff x) = \left( F_2(x) + \int_{\real} \kappa_1(x,z) \eta(\diff z) + \int_{(0,x]} \kappa_2(x,z)\eta(\diff z)\right)\diff x,
\end{align}
where for $x\leq 0$ the integral $\int_{(0,x]}\kappa_2(x,z)\eta(\diff z)$ is interpreted as $-\int_{(x,0]}\kappa_2(x,z)\eta(\diff z)$. \\
Thus, roughly speaking, a measure \(\eta\) on \(\real\) is infinitesimally invariant for a given Lévy-type operator, if and only if it solves the corresponding Volterra-Fredholm integral equation. 
For a detailed introduction and overview of such (and more general) equations see e.g. \cite{brunner2017volterra,gripenberg1990volterra,wazwaz2011linear}.\\

 Although theory on multidimensional integral equations exists, cf. \cite{bart1973linear,brunner2017volterra}, and certain special cases such as OUT processes, cf. \cite{sato}, even yield such integral equations, it is not possible to simply translate the steps of the proof of Theorem \ref{thm_vfie} to the multidimensional case. Thus, in order to avoid an overload of technicalities, we stick to the one-dimensional case. \\
 In the following, for any Lévy kernel $\Pi(x,\cdot)$, $x\in\real$, in $\real$ we denote 
 \begin{align}\label{eq_1d_tail}
 	\widetilde\Pi(x,z):=\begin{cases}
 		\int_{(z,\infty)}\Pi(x,\diff r) = \Pi(x,(z,\infty)), &z>0,\\
 		\int_{(-\infty,z)}\Pi(x,\diff r) = \Pi(x,(-\infty,z)), &z<0,\\
 		0, &  z=0.
 	\end{cases}
 \end{align}
 
 \begin{theorem}\label{thm_vfie}
 	Let \((\cA,\domain)\) be a Lévy-type operator in \(\real\) with characteristic triplet \((a,b,\Pi)\) and assume \(\testo\subset\domain\). Further, let \(\pi=(\nu,\mu, \rho)\) be a convenient decomposition of \(\Pi\) and let \(\eta\) be a positive Radon measure on \(\real\)  such that $a+a_\pi\in L^1_\loc(\eta)$, \(b\in W^{1,1}_\loc(\eta)\), and $U^\pi(\eta)$, $V^\pi(\eta)$, $W^\pi(\eta)\in\cD'^0$.\\
 	Then \(\eta\) is infinitesimally invariant for \((\cA,\domain)\) if and only if there exist constants \(c_1,c_2\in\real\) such that \(\eta\) solves \eqref{eq_vfie_template} with
 	\begin{align*}
 		F_1(x)& =\frac12b(x),\qquad  F_2(x)=c_1x+c_2,\\
 		 \kappa_1(x,z)& := \begin{cases} - \int_{(0,x]}\int_{(0,u]} \nu(z,\diff y-z) \diff u - \int_{(0,x]} \sgn(y-z) \widetilde\mu(z,y-z)\diff y - \widetilde{\overline\rho}(z,x-z), & x>0,\\
 		 - \int_{(x,0]}\int_{(u,0]} \nu(z,\diff y-z) \diff u + \int_{(x,0]} \sgn(y-z) \widetilde\mu(z,y-z)\diff y - \widetilde{\overline\rho}(z,x-z),	&x\leq 0, \end{cases} \\
 		\kappa_2(x,z)& :=  a(z) +  a_\pi(z) + \frac12b'(z) + (x-z)\nu(z,\real),
 	\end{align*}
 	for all \(x,z\in\real\), and where the densities \(\widetilde\mu\) and \(\widetilde{\overline\rho}\) are defined according to \eqref{eq_1d_tail}.
 \end{theorem}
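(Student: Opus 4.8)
The plan is to start from Theorem~\ref{thm_distEQ} specialised to $d=1$, which tells us that $\eta$ is infinitesimally invariant if and only if, for every $f\in\testo$,
\begin{align*}
0 = -\langle \nabla\cdot((a+a_\pi)\eta),f\rangle + \tfrac12\langle (b\eta)'',f\rangle + \langle U^\pi(\eta),f\rangle + \langle V^\pi(\eta)',f\rangle + \langle W^\pi(\eta)'',f\rangle,
\end{align*}
where in one dimension $\hess\cdot W^\pi(\eta)$ is simply $W^\pi(\eta)''$ and $\nabla\cdot V^\pi(\eta)=V^\pi(\eta)'$. Lemma~\ref{lem_abcM} guarantees that $(a+a_\pi)\eta\in\cD'^0$ since $a+a_\pi\in L^1_\loc(\eta)$, that $\tfrac12(b\eta)'\in\cD'^1$ since $b\in W^{1,1}_\loc(\eta)$ (using the product rule $\tfrac12(b\eta)'=\tfrac12 b'\eta + \tfrac12 b\eta'$ and that both summands are distributions), and the three jump functionals are in $\cD'^0$ by hypothesis; so all five terms are genuine distributions and $\eta\in\cM(\cA,\pi)$. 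The equation above then says that a certain distribution $\mathscr{T}\in\cD'^0+\cD'^1+\cD'^0$ annihilates all of $\testo$, i.e. $\mathscr{T}=0$ in $\cD'$.

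The core of the argument is a structural fact about one-dimensional distributions: if $\mathscr{T}=0$ in $\cD'(\real)$ and $\mathscr{T}$ is written as $-S' + R$ where $S$ and $R$ are (real-valued Radon) measures — more precisely, after collecting, $\mathscr{T} = \big(\tfrac12 b\eta\big)'' - \big((a+a_\pi)\eta + \tfrac12 b'\eta - V^\pi(\eta)\big)' + \big(U^\pi(\eta)+W^\pi(\eta)''\big)$, but one should group more carefully — then one can integrate ``once'' and ``twice'' to peel off the derivatives. Concretely, the standard lemma is: a distribution $T\in\cD'(\real)$ with $T'=0$ is a constant; hence $T=0$ forces, after one antiderivative, that the order-one part equals a constant, and after a second antiderivative one picks up an affine function $c_1 x + c_2$. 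This is exactly where the two free constants $c_1,c_2$ and the functions $F_2(x)=c_1 x+c_2$, $F_1(x)=\tfrac12 b(x)$ in the statement come from. The plan is therefore: (1) rewrite $U^\pi(\eta)$, $V^\pi(\eta)'$ and $W^\pi(\eta)''$ as (derivatives of) explicit measures by unfolding the definitions of $\overline\mu$ and $\overline{\overline\rho}$ via the one-dimensional tail notation \eqref{eq_1d_tail}; (2) show $U^\pi(\eta)$ can be written as $G''$ for an explicit locally integrable $G$ obtained by integrating $x\mapsto \int f(x+y)\nu(x,\diff y) - f(x)\nu(x,\real)$ twice against $f$ and shifting the variable (this produces the $\int_{(0,x]}\int_{(0,u]}\nu(z,\diff y - z)\,\diff u$ kernel and the $(x-z)\nu(z,\real)$ term); (3) similarly express $V^\pi(\eta)'$ and $W^\pi(\eta)''$ after one resp. two integrations, which accounts for the $\sgn(y-z)\widetilde\mu(z,y-z)$ and $\widetilde{\overline\rho}(z,x-z)$ terms; (4) collect everything under a double antiderivative, so that the distributional identity becomes $\big(\tfrac12 b\eta - [\text{stuff}]\big)'' = 0$; (5) conclude via the ``$T''=0\Rightarrow T$ affine'' lemma that $\tfrac12 b(x)\eta(\diff x) = \big(c_1 x + c_2 + \int \kappa_1(x,z)\eta(\diff z) + \int_{(0,x]}\kappa_2(x,z)\eta(\diff z)\big)\diff x$ — being careful that the right-hand side, a priori a distribution, is in fact represented by an $L^1_\loc$ density times $\diff x$, which is what licenses writing it as in \eqref{eq_vfie_template}. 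Conversely, differentiating \eqref{eq_vfie_template} twice in the distributional sense recovers \eqref{eq_distributional}, giving the reverse implication.

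The main bookkeeping obstacle, and the step I expect to consume most of the proof, is step (2)--(3): carefully performing the ``integrate against $f$, then Fubini/change variables, then recognise the antiderivative'' manipulation for each jump term while tracking the sign conventions across $x>0$ and $x\le 0$ (this is the source of the two-case definitions of $\kappa_1$ and of the convention $\int_{(0,x]} = -\int_{(x,0]}$ for $x\le 0$ stated after \eqref{eq_vfie_template}). In particular one must justify interchanging the $\eta(\diff x)$-integration with the antiderivative integrations, which is where the hypotheses $U^\pi(\eta),V^\pi(\eta),W^\pi(\eta)\in\cD'^0$ (local finiteness of the relevant measures) and Tonelli are used; and one must check that the resulting kernels $\kappa_1(\cdot,z),\kappa_2(\cdot,z)$ are such that $z\mapsto\int\kappa_i(x,z)\eta(\diff z)$ is locally integrable in $x$, so that \eqref{eq_vfie_template} is a genuine identity of measures and not merely of distributions. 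A secondary subtlety is the treatment of the $\tfrac12(b\eta)''$ term: one uses $b\in W^{1,1}_\loc(\eta)$ and the product rule from Lemma~\ref{lem_abcM}(ii) to legitimately split off $\tfrac12 b'(z)$ into $\kappa_2$, leaving the ``pure'' second-derivative term $\tfrac12(b\eta)''$ to be cancelled by the double antiderivative on the left, which is why $F_1=\tfrac12 b$ sits alone on the left-hand side of \eqref{eq_vfie_template}. I would present steps (2)--(4) as a sequence of short lemmas (one per jump functional) to keep the sign-tracking contained, then assemble them in a final paragraph invoking the elementary distributional lemma for step (5) and its converse.
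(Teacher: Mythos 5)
Your plan is essentially the paper's own proof: specialise Theorem~\ref{thm_distEQ} to $d=1$, integrate the resulting distributional equation twice (the affine function $c_1x+c_2$ arising from the freedom in choosing primitives), identify explicit locally integrable primitives of the jump functionals by unfolding the one-dimensional tail representations (the paper's Lemmas~\ref{lem_onedimkernel} and~\ref{lem_primitiveradon}, with Tonelli justifying the interchanges), and use the product rule $b\eta'=(b\eta)'-b'\eta$ from Lemma~\ref{lem_abcM} to produce $F_1=\frac12 b$ on the left and the $\frac12 b'$ contribution in $\kappa_2$, with the converse obtained by differentiating twice. The only slip is cosmetic: in your first display the diffusion term should be $\frac12 (b\eta')'$ rather than $\frac12 (b\eta)''$, which your later discussion of splitting off $\frac12 b'$ in fact handles correctly.
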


 For the proof of this theorem we need two additional lemmas, the first of which shows that the (double) integrated radial tails of a Lévy kernel as defined in \ref{def_inttail} have an explicit representation in the one-dimensional case.

\begin{lemma}\label{lem_onedimkernel}
Let \(\Pi\) be a Lévy kernel in \(\real\). Then for all fixed $x\in \real$ the measure \(\overline\Pi(x,\cdot)\) is absolutely continuous and 
\begin{align}\label{eq_onedimrad}
\overline\Pi(x,B)&=\int_B \Pi(x,\{u\in\real: u\sgn(z)> z\sgn(z), z\neq0 \})\diff z  = \int_B \widetilde{\Pi}(x,z) \diff z,
\end{align}
for all \(B\in\cB(\real)\). Moreover, if \(\overline{\overline\Pi}(x,\cdot)\) is well-defined, then it is absolutely continuous as well and 
\begin{align}\label{eq_onedimint}
\overline{\overline\Pi}(x,B)=\int_B \overline\Pi(x,\{u\in\real: u\sgn(z)> z\sgn(z),z\neq 0\})\diff z = \int_B \widetilde{\overline{\Pi}}(x,z) \diff z,
\end{align}
for all \(x\in\real\) and \(B\in\cB(\real)\).
\end{lemma}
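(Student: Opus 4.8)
The plan is to prove \eqref{eq_onedimrad} first and then obtain \eqref{eq_onedimint} by essentially repeating the argument one level up. For the first identity I would start from the intrinsic definition of $\overline\Pi$ in Definition \ref{def_inttail} and use the coordinate-free reformulation already obtained in the proof of the well-definedness lemma, namely
\begin{align*}
\overline\Pi(x,B)=\int_{\real}\int_{(0,1]}\bone_B(zy)\,\diff z\,\Pi(x,\diff y).
\end{align*}
In dimension one, $S^0=\{-1,+1\}$, so the polar decomposition merely splits $\Pi(x,\cdot)$ into its restrictions to the positive and negative half-lines. For a fixed $y>0$ the inner integral $\int_{(0,1]}\bone_B(zy)\,\diff z$ equals $\frac1y\lambda\big(B\cap(0,y]\big)$, and for $y<0$ it equals $\frac1{|y|}\lambda\big(B\cap[y,0)\big)$; writing this back as an iterated integral over $z$ and swapping the order of integration via Tonelli (all integrands nonnegative) turns the right-hand side into $\int_B \Pi\big(x,\{u: u\,\sgn(z)>z\,\sgn(z)\}\big)\diff z$, which is exactly $\int_B \widetilde\Pi(x,z)\,\diff z$ by the definition \eqref{eq_1d_tail}. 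This simultaneously proves absolute continuity and exhibits the density.

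For the second identity I would use that, under the standing integrability hypothesis $\int_{\real}(|y|\wedge|y|^2)\Pi(x,\diff y)<\infty$, the measure $\overline\Pi(x,\cdot)$ is itself a one-dimensional Lévy kernel: indeed from \eqref{eq_onedimrad} and Tonelli one checks $\int_{\real}(|z|\wedge 1)\,\overline\Pi(x,\diff z)<\infty$, which is what is needed to apply Definition \ref{def_inttail} to $\overline\Pi$ in place of $\Pi$. Then $\overline{\overline\Pi}(x,\cdot)$ is by construction nothing but the integrated radial tail of $\overline\Pi(x,\cdot)$, so applying the already-proved first part with $\Pi$ replaced by $\overline\Pi$ gives $\overline{\overline\Pi}(x,B)=\int_B \overline\Pi\big(x,\{u: u\,\sgn(z)>z\,\sgn(z)\}\big)\diff z=\int_B\widetilde{\overline\Pi}(x,z)\,\diff z$. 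One should double-check that the polar decomposition of $\overline\Pi(x,\cdot)$ used in this second step is consistent with the one used to define $\overline{\overline\Pi}$ in Definition \ref{def_inttail}; this is immediate because the radial part of $\overline\Pi$ is, by the computation above, precisely $\int_{(\cdot,\infty)}\Pi_\xi(x,\diff r)\,\diff z$, which is what enters the double integral defining $\overline{\overline\Pi}$.

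The measurability in $x$ and $\sigma$-finiteness statements are already contained in the preceding well-definedness lemma and need no repetition; the only genuinely new content here is the explicit densities, so the proof is really just the two Tonelli computations above. I would present the $d=1$ case of the polar decomposition explicitly (positive/negative parts) to make the $\sgn(z)$ bookkeeping transparent, and then handle $z>0$ and $z<0$ in parallel. The main obstacle, such as it is, is purely notational: keeping the orientation conventions straight so that $\{u: u\,\sgn(z)>z\,\sgn(z)\}$ correctly reduces to $(z,\infty)$ for $z>0$ and $(-\infty,z)$ for $z<0$, matching \eqref{eq_1d_tail}; there is no analytic difficulty once the change of variables is set up.
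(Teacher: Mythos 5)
Your overall architecture is fine, but the key computation in the first half does not work as written. The coordinate-free identity you quote, $\overline\Pi(x,B)=\int_\real\int_{(0,1]}\bone_B(zy)\,\diff z\,\Pi(x,\diff y)$, is missing a Jacobian factor $|y|$: changing variables $z\mapsto z|y|$ in Definition~\ref{def_inttail} gives $\overline\Pi(x,B)=\int_\real |y|\int_{(0,1]}\bone_B(zy)\,\diff z\,\Pi(x,\diff y)$ (test with $\Pi(x,\cdot)=\delta_2$: the definition yields Lebesgue measure on $(0,2)$, total mass $2$, while the unscaled formula yields total mass $1$). Consequently, your (correct) evaluation of the inner integral as $\frac1{|y|}\lambda\big(B\cap(0,y]\big)$ followed by Tonelli does not produce $\int_B\widetilde\Pi(x,z)\,\diff z$ from the formula you start with: it produces the density $z\mapsto\int_{\{u:\,u\sgn(z)\geq z\sgn(z)\}}|u|^{-1}\Pi(x,\diff u)$, i.e.\ a spurious $|u|^{-1}$ survives. (The paper's own display in the well-definedness lemma omits the same factor, but with the formula as quoted your chain of equalities is inconsistent.) The repair is immediate: either restore the factor $|y|$, after which it cancels the $\frac1{|y|}$ and Tonelli gives exactly \eqref{eq_onedimrad} (up to the harmless $[z,\infty)$ versus $(z,\infty)$ discrepancy, which only matters on the Lebesgue-null set of atoms of $\Pi(x,\cdot)$), or bypass the reformulation altogether: since $S^0=\{-1,1\}$, plug the two-point sphere directly into Definition~\ref{def_inttail} to get $\overline\Pi(x,B)=\int_{(0,\infty)}\bone_B(z)\,\Pi(x,(z,\infty))\,\diff z+\int_{(-\infty,0)}\bone_B(z)\,\Pi(x,(-\infty,z))\,\diff z$, which is \eqref{eq_onedimrad} with no further work. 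The latter is what the paper does.

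The second half of your argument is correct and takes a genuinely different route from the paper: the paper recomputes $\overline{\overline\Pi}$ from scratch by splitting the defining quadruple integral over $\xi\in\{-1,1\}$, whereas you observe that $\overline{\overline\Pi}$ is the integrated radial tail of $\overline\Pi$, verify $\int_\real(1\wedge|z|)\,\overline\Pi(x,\diff z)<\infty$ under the standing hypothesis $\int_\real(|y|\wedge|y|^2)\Pi(x,\diff y)<\infty$ so that $\overline\Pi(x,\cdot)$ is again a Lévy measure, and then apply the already-proved first identity to $\overline\Pi$. This is legitimate because the integrated radial tail was shown to be independent of the chosen polar decomposition, so your consistency remark is automatic; it buys a shorter second step at the price of leaning on the well-definedness lemma. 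But it only helps once the first half is patched as above.
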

\begin{proof}
Since \(S^0=\{-1,1\}\) is discrete we obtain
\begin{align*}
\overline\Pi(x,B)&= \int_{\{-1,1\}}\int_{(0,\infty)} \bone_B(z\xi) \int_{(z,\infty)} \Pi_{\xi}(x,\diff r)\, \diff z \, \Pi_{o}(x,\diff \xi)\\
&=\int_{(0,\infty)} \bone_B(z) \int_{(z,\infty)} \Pi(x,\diff r)\,\diff z +\int_{(-,\infty,0)} \bone_B(z) \int_{(-\infty,z)} \Pi(x,\diff r)\,\diff z,
\end{align*}
for all \(x\in\real\) and \(B\in\cB(\real)\). This already implies that for fixed \(x\in\real\) the measure $\overline{\Pi}(x,\cdot)$  is absolutely continuous with density $\widetilde{\Pi}(x,\cdot)$ 
in agreement with \eqref{eq_onedimrad}.\\
 Further, if \(\overline{\overline\Pi}\) is well-defined, we have by definition
\begin{align*}
\overline{\overline\Pi}(x,B) & = \int_{\{-1,1\}}\int_{(0,\infty)} \bone_{B}(z_2\xi) \int_{(z_2,\infty)}\int_{(z_1,\infty)} \Pi_\xi(x,\diff r)\, \diff z_1 \,\diff z_2 \,\Pi_o(x,\diff \xi)\\
&= ~~\int_{(0,\infty)} \bone_{B}(z_2) \int_{(z_2,\infty)}\int_{(z_1,\infty)} \Pi(x,\diff r)\,\diff z_1\, \diff z_2 \\
&~~+ \int_{(-\infty,0)} \bone_{B}(z_2) \int_{(-\infty,z_2)}\int_{(-\infty,z_1)} \Pi(x,\diff r)\,\diff z_1 \,\diff z_2
\end{align*}
for all \(x\in\real\) and \(B\in\cB(\real)\). Thus, the density of the double-integrated radial tail \(\overline{\overline\Pi}\) can be written as
\begin{align*}%\label{eq_1d integratedtail}
\widetilde{\overline\Pi}(x,z) = \begin{cases}
\int_{(z,\infty)} \Pi(x,(y,\infty))\diff y  = \overline{\Pi}(x, (z,\infty)), &z>0,\\
\int_{(-\infty,z)} \Pi(x,(-\infty,y))\diff y  = \overline{\Pi}(x, (-\infty, z)), &z<0,\\
 0, &  z=0,
\end{cases}
\end{align*}
in agreement with \eqref{eq_onedimint}.
\end{proof}

\begin{lemma}\label{lem_primitiveradon}
	Let $T\in \cD'^0(\real)$ and let $m$ be the associated real-valued Radon measure as in \eqref{eq_radon}. Then the function 
	$$M:\real\to\real:x\mapsto  \int_{(0,x]} m(\diff y) := \begin{cases}
		m((0,x])=:\langle T, \mathds{1}_{(0,x]}\rangle , & x> 0,\\
		-m((x,0])=:-\langle T, \mathds{1}_{(x,0]}\rangle,& x<0,\\ 
		0,& x=0,	\end{cases}$$ 
		is locally integrable. Moreover, for every $f\in \cC_c^\infty(\real)$ it holds
$$\langle T, f \rangle = - \int_\real f'(x) M(x) \diff x = - \langle M, f' \rangle,$$
i.e. $M$ defines a distributional primitive of $T$.
\end{lemma}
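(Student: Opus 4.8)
The plan is to first establish local integrability of $M$ via a Fubini/Tonelli argument applied to the positive and negative parts of $m$, then to prove the integration-by-parts identity by writing $f$ as a primitive of $f'$ and interchanging the order of integration. Write $m = m_1 - m_2$ with $m_1, m_2$ positive Radon measures. For the local integrability, I would fix $R > 0$ and estimate $\int_{-R}^{R} |M(x)|\,\diff x$ by splitting into $x > 0$ and $x < 0$; on $\{x > 0\}$ one has $|M(x)| \leq m_1((0,x]) + m_2((0,x]) \leq |m|((0,R])$ for $x \in (0,R]$, which is finite since $|m|$ is locally finite, and symmetrically on $\{x < 0\}$. Hence $M \in L^1_{\loc}(\real)$, so it defines a regular distribution $\langle M, g\rangle = \int_\real g(x) M(x)\,\diff x$.

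For the main identity, fix $f \in \cC_c^\infty(\real)$ with $\supp f \subseteq [-R, R]$. Since $f$ has compact support, $f(x) = \int_{(0,x]} f'(y)\,\diff y$ for all $x$ (with the usual sign convention for $x < 0$, and using $f(0) = f(0)$ as the base point — actually I would instead use $f(x) = -\int_{(x,\infty)} f'(y)\,\diff y = \int_{(-\infty,x]} f'(y)\,\diff y$ to avoid sign bookkeeping, exploiting that $f$ vanishes at $\pm\infty$). Then
\begin{align*}
\langle T, f\rangle = \int_\real f(x)\, m(\diff x) = \int_\real \left( \int_{(-\infty,x]} f'(y)\,\diff y \right) m(\diff x).
\end{align*}
Applying Tonelli to $|f'|$ against $|m|$ — legitimate because $|f'|$ is bounded with support in $[-R,R]$ and $|m|([-R,R]) < \infty$ — justifies swapping the integrals, giving $\int_\real f'(y) \left( \int_{[y,\infty)} m(\diff x) \right) \diff y$. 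The inner integral $\int_{[y,\infty)} m(\diff x)$ equals $m(\real) - m((-\infty,y))$; since $f'$ has compact support and integrates to zero, the constant $m(\real)$ term drops, leaving $-\int_\real f'(y)\, m((-\infty,y))\,\diff y$. It remains to identify $m((-\infty,y))$ with $M(y)$ up to a constant that again integrates against $f'$ to zero: indeed $M(y) = m((0,y]) = m((-\infty,y]) - m((-\infty,0])$ for $y > 0$ and similarly for $y < 0$, and the difference between $m((-\infty,y))$ and $m((-\infty,y])$ is $m(\{y\})$, which vanishes for Lebesgue-a.e. $y$ (a Radon measure has at most countably many atoms). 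So $\langle T, f\rangle = -\int_\real f'(y) M(y)\,\diff y = -\langle M, f'\rangle$.

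The main obstacle is the careful bookkeeping of which integration endpoints are open versus closed and the corresponding sign conventions for negative $x$, together with the need to repeatedly discard constants and atomic/boundary terms using the fact that $\int f' = 0$ and that a Radon measure has only countably many atoms. None of this is deep, but it must be done cleanly; choosing the primitive representation $f(x) = \int_{(-\infty,x]} f'$ rather than $\int_{(0,x]} f'$ substantially reduces the case analysis. One should also make explicit at the outset that the bracketed expressions $\langle T, \mathds{1}_{(0,x]}\rangle$ etc.\ in the statement are well-defined precisely because $T \in \cD'^0(\real)$ extends to $\cC_c$ and indicator functions of bounded intervals are $|m|$-integrable, so $M$ is genuinely a function $\real \to \real$.
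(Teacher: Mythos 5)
Your local integrability argument is fine, but the main identity has a genuine gap: your interchange of integrals is not justified for a general Radon measure, because Radon measures are only \emph{locally} finite. After writing $f(x)=\int_{(-\infty,x]}f'(y)\,\diff y$, the double integral you want to swap is that of $f'(y)\bone_{\{y\le x\}}$ against $\diff y\otimes m(\diff x)$; although $f$ vanishes off $[-R,R]$, this integrand does \emph{not} vanish for $x>R$, so the absolute integral equals $\int_\real |f'(y)|\,|m|([y,\infty))\,\diff y$, and the bound ``$|m|([-R,R])<\infty$'' does not control it. Worse, the quantities you then manipulate, $m([y,\infty))$, $m((-\infty,y))$ and $m(\real)$, need not be finite at all, and the step ``the constant $m(\real)$ term drops because $\int f'=0$'' is an $\infty-\infty$ cancellation when $m$ is infinite. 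Take $m$ equal to Lebesgue measure (a case that matters here, since the paper explicitly allows infinite invariant measures): every tail mass in your computation is $+\infty$, so the intermediate expressions are undefined even though the lemma itself is true.

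The fix is to anchor the primitive at a finite point rather than at $-\infty$, so that only $m$-masses of \emph{bounded} intervals ever appear. This is exactly what the paper does: it starts from $\langle M,f'\rangle=\int_\real f'(x)M(x)\,\diff x$, splits at $0$ using $M(x)=m((0,x])$ for $x>0$ and $M(x)=-m((x,0])$ for $x<0$, and swaps integrals there; after the swap the $m$-integration runs over $u$ with $\int_{[u,\infty)}f'(x)\,\diff x\neq 0$, i.e.\ over a bounded set, so Fubini--Tonelli applies with only local finiteness of $m$ (reducing w.l.o.g.\ to $m\ge 0$), and the inner Lebesgue integral of $f'$ directly produces $-f(u)$ with no constants or tail terms to discard. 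Alternatively, within your own scheme you could write $f(x)=\int_{[-R,x]}f'(y)\,\diff y$ (using that $f'$ vanishes below $-R$), which confines the double integral to the bounded triangle $\{-R\le y\le x\le R\}$; then your atom/constant bookkeeping goes through, since the constant to discard is $m((0,R])$, which is finite.
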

\begin{proof}
	By definition $m$ is the difference of two (positive) Radon measures and we can assume w.l.o.g. that $m>0$. Then $M$ is non-decreasing and as $M(x)\in \real$ for all $x\in \real$ this implies the local integrability. \\
	Further, for $f\in \cC_c^\infty(\real)$ we compute by Fubini's theorem
	\begin{align*}
		\langle M, f'\rangle &= \int_\real f'(x) M(x) \diff x \\
		&= \int_{(0,\infty)} f'(x)  \int_{(0,x]}  m(\diff u) \, \diff x - \int_{(-\infty,0)} f'(x)  \int_{(x,0]}  m(\diff u) \, \diff x\\
		&= \int_{(0,\infty)} \int_{[u,\infty)} f'(x)   \diff x\,  m(\diff u) -  \int_{(-\infty,0]} \int_{(-\infty,u)} f'(x)  \diff x \,  m(\diff u)\\
		&= - \int_{(0,\infty)} f(u) m(\diff u) -  \int_{(-\infty,0]}  f(u)  m(\diff u) = - \langle T, f \rangle. \qedhere
	\end{align*}
\end{proof}

\begin{proof}[Proof of Theorem \ref{thm_vfie}] 
First of all note that by Lemma \ref{lem_abcM} (see also Corollary \ref{cor_etainM} below) our assumptions imply that \(\eta\in\cM(\cA,\pi)\).\\	
As \(d=1\), Equation \eqref{eq_distributional} simplifies to 
\begin{align}\label{eq_dist1dim}
-((a+a_\pi)\eta)' + \frac12 (b\eta')' + U^\pi(\eta) + V^\pi(\eta)' + W^\pi(\eta)''=0,
\end{align}
where we emphasize that differentiation and integration is to be understood in the distributional sense. By assumption, \(\frac12 (b\eta')'\) defines a distribution, and therefore it admits a primitive \(\frac12 (b\eta')\) which again defines a distribution. Moreover, as $\eta \in \cD'^0$ and $b\in W^{1,1}_\loc(\eta)$  the product rule \(b\eta'=(b\eta)'-b'\eta\) applies as seen in the proof of Lemma \ref{lem_abcM}. In particular, \(b\eta\) and \(b'\eta\) define distributions as well. Hence, integrating \eqref{eq_dist1dim} twice yields
\begin{align}\label{eq_int1dim}
\frac12b\eta-\int\Big(a+a_\pi + \frac12 b'\Big)\eta + \iint U^\pi(\eta) +\int V^\pi(\eta) + W^\pi(\eta)=F_2, 
\end{align}
where \(F_2\) denotes the distribution associated to the locally integrable function \(F_2(x)=c_1x+c_2\).  \\
Since primitives of distributions are unique only up to a constant, leaving \(c_1,c_2\in\real\) unspecified allows us to freely choose the primitives on the left-hand side of \eqref{eq_int1dim}. We can therefore  associate the first integral term  to the locally integrable function $$x\mapsto\begin{cases}
	-\int_{ (0,x]}(a(y)+a_\pi(y)+\frac12b'(y))\eta(\diff y), & x>0,\\
	  \int_{(x,0]}(a(y)+a_\pi(y)+\frac12b'(y))\eta(\diff y), & x<0,\\
	0,& x=0.
\end{cases} $$  
With the notational convention introduced in the beginning of this section this mapping simply reads as
$$ x\mapsto
	-\int_{(0,x]}(a(y)+a_\pi(y)+\frac12b'(y))\eta(\diff y). $$  
To derive the second-order primitive of \(U^\pi(\eta)\), first use Lemma \ref{lem_primitiveradon} to observe that we obtain a locally integrable function associated to a distributional primitive of \(U^\pi(\eta)\) by mapping 
\begin{align*}
u\mapsto \int_\real \int_{(0,u]} \nu(z,\diff y-z)\eta(\diff z) - \int_{(0,u]} \nu(z,\real)\eta(\diff z).
\end{align*} 
Integrating once again and swapping the order of integration by Tonelli's theorem we obtain 
\begin{align*}
\int_{(0,x]} \int_\real \int_{ (0,u]} \nu(z,\diff y-z)\eta(\diff z)  \diff u
&=\int_\real\int_{(0,x]} \int_{(0,u] }\nu(z,\diff y-z)\,\diff u \, \eta(\diff z).
\end{align*}
Likewise 
\begin{align*}
\int_{(0,x]} \int_{(0,u]} \nu(z,\real)\eta(\diff z) \diff u %&= \int_{(0,x]} \int_{[0,u]} \nu(z,\real) \eta(\diff z) \, \diff u \\
&= \int_{(0,x]} \int_{[z,x]} \nu(z,\real) \diff u \, \eta(\diff z) 
 =\int_{(0,x]}(x-z)\nu(z,\real)\, \eta(\diff z).
\end{align*}
Thus, the locally integrable function
\begin{align*}
	x\mapsto \int_\real\int_{(0,x]} \int_{(0,u] \setminus\{z\} }\nu(z,\diff y-z)\,\diff u \, \eta(\diff z) - \int_{ (0,x]}(x-z)\nu(z,\real)\, \eta(\diff z)
\end{align*}
is associated to a second order primitive of $U^\pi(\eta)$.\\
In the same spirit , applying Lemma \ref{lem_onedimkernel} on the Lévy kernel $\overline{\mu}$,
\begin{align*}
x &\mapsto  \int_\real \int_{(0,x]} \sgn (y-z) \overline{\mu}(z, \diff y - z)\, \eta(\diff z)%\\
= \int_{\real} \int_{(0,x]} \sgn(y-z)\widetilde\mu(z, y-z)\diff y\, \eta(\diff z),
\end{align*}
is a locally integrable function associated to a primitive of $V^\pi(\eta)$.\\
 Lastly, we compute, again via Lemma \ref{lem_onedimkernel}, 
\begin{align*}%\label{eq_W_fubton}
\langle W^\pi(\eta), f\rangle &=\int_\real\int_\real f(x)\overline{\overline\rho}(z,x-z)\diff x\,  \eta(\diff z)
=\int_\real f(x)\int_\real \widetilde{\overline\rho}(z,x-z)\eta(\diff z)\,\diff x
\end{align*} 
for all \(f\in\testo\), due to the Fubini-Tonelli theorem. \\
We have thereby shown that all terms of \eqref{eq_int1dim} are distributions of order \(0\), and thus, written in terms of the associated real-valued Radon measures, \eqref{eq_int1dim} becomes \eqref{eq_vfie_template} with \(F_1,F_2,\kappa_1,\kappa_2\) as given in the statement of the theorem.
\end{proof}

\begin{remark}
	In many cases one can use additional assumptions on the invariant measure (e.g. it being a probability law) to specify one or both of the constants \(c_1\) and \(c_2\) in Theorem \ref{thm_vfie}; see e.g. \cite{behme4} or the upcoming example in Section \ref{sec_superlinear}. However, there seems to be no general method to deal with them.
\end{remark}

The following corollary addresses absolute continuity of solutions to \eqref{eq_vfie_template}.

\begin{corollary}\label{cor_abscont}
Let \(\eta\) be an infinitesimally invariant measure under the assumptions of Theorem \ref{thm_vfie}. Set \(\Omega_1:=\{x\in\real: b(x)>0\}\) and  let \(\Omega_2\subset\real\) be an open set such that
\begin{enumerate}
\item \(a(x)+a_\pi(x)\neq 0\) for all \(x\in \Omega_2\), and 
\item \(\langle W^\pi(\eta),f\rangle=0\) for all \(f\in \cC_c^\infty(\Omega_2)\).
\end{enumerate}
Then there exist an absolutely continuous measure \(\eta_c\), and a measure \(\eta_d\) with \(\supp(\eta_d)\subset\real\setminus~(\Omega_1\cup\Omega_2)\) such that \(\eta=\eta_c+\eta_d\).
\end{corollary}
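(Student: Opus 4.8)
The plan is to decompose $\eta$ via the Lebesgue decomposition theorem into an absolutely continuous part $\eta_c$ and a singular part $\eta_s$ with respect to Lebesgue measure, and then to show that the singular part is supported outside $\Omega_1 \cup \Omega_2$. Writing $\eta = \eta_c + \eta_s$, I would set $\eta_d := \eta_s$ and the task reduces to proving $\eta_s(\Omega_1 \cup \Omega_2) = 0$, or equivalently $\supp(\eta_s) \subset \real \setminus (\Omega_1 \cup \Omega_2)$. For this I exploit the integral equation \eqref{eq_vfie_template} satisfied by $\eta$, whose right-hand side is, by construction, an absolutely continuous measure: it is $\diff x$ times a locally integrable function (the bracketed term), because $F_2$ is locally integrable and each of the two kernel integrals, viewed as a function of $x$, is locally integrable — the latter fact was established inside the proof of Theorem \ref{thm_vfie} when those terms were identified as distributions of order zero associated to locally integrable functions.

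The key observation is that \eqref{eq_vfie_template} forces $F_1(x)\,\eta(\diff x) = \tfrac12 b(x)\,\eta(\diff x)$ to be absolutely continuous with respect to Lebesgue measure. Hence on the set $\Omega_1 = \{b > 0\}$, the measure $\eta$ restricted to $\Omega_1$ is absolutely continuous, since there $\eta(\diff x) = \tfrac{2}{b(x)} \cdot \tfrac12 b(x)\,\eta(\diff x)$ with $2/b$ locally bounded on compact subsets of the open set $\Omega_1$. Therefore $\eta_s(\Omega_1) = 0$. For the set $\Omega_2$, I would test \eqref{eq_distributional} (equivalently \eqref{eq_dist1dim}) against functions $f \in \cC_c^\infty(\Omega_2)$: on $\Omega_2$ we have $W^\pi(\eta)$ acting as zero by hypothesis (ii), so \eqref{eq_dist1dim} restricted to $\Omega_2$ reads $-((a+a_\pi)\eta)' + \tfrac12(b\eta')' + U^\pi(\eta) + V^\pi(\eta)' = 0$ as distributions on $\Omega_2$. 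Integrating once (as in the proof of Theorem \ref{thm_vfie}) on the open interval(s) making up $\Omega_2$ yields that $(a+a_\pi)\eta$ equals, on $\Omega_2$, an absolutely continuous measure plus $\tfrac12 b\eta'$ — and since $b$ vanishes on $\Omega_2 \setminus \Omega_1$ while on $\Omega_2 \cap \Omega_1$ we already know $\eta$ is absolutely continuous, one concludes that $(a+a_\pi)\eta$ is absolutely continuous on $\Omega_2$. By hypothesis (i), $a + a_\pi$ is non-vanishing on $\Omega_2$, so dividing through shows $\eta$ is absolutely continuous on $\Omega_2$, i.e. $\eta_s(\Omega_2) = 0$.

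Combining, $\eta_s(\Omega_1 \cup \Omega_2) = 0$, so $\supp(\eta_s) \subset \real \setminus (\Omega_1 \cup \Omega_2)$, and the decomposition $\eta = \eta_c + \eta_d$ with $\eta_d := \eta_s$ is the claimed one. The main obstacle I anticipate is the step on $\Omega_2$: one must argue carefully that "integrating \eqref{eq_dist1dim} once on $\Omega_2$" is legitimate when $\Omega_2$ is a general open set (a countable union of open intervals), that the resulting first-order primitive of the absolutely continuous terms $U^\pi(\eta)$, $V^\pi(\eta)'$ and of $(b\eta')' = (b\eta)'' - (b'\eta)'$ is genuinely absolutely continuous on each component interval, and that the product $(a+a_\pi)\eta$ being absolutely continuous with $a+a_\pi$ continuous (or at least locally bounded) and non-vanishing indeed forces $\eta$ itself to be absolutely continuous there — this last deduction uses that $1/(a+a_\pi)$ is locally bounded on the open set $\Omega_2$ where $a + a_\pi \neq 0$, together with a measurable-selection/Radon–Nikodym argument. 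One should also take care that $b \in W^{1,1}_{\loc}(\eta)$ does not by itself give $b \in W^{1,1}_{\loc}$ with respect to Lebesgue measure, but on $\Omega_1$ this is not needed and on $\Omega_2 \setminus \Omega_1$ the term $\tfrac12(b\eta')'$ contributes nothing, so the argument still closes.
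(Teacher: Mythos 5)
Your proposal is correct and follows essentially the same route as the paper: absolute continuity on $\Omega_1$ is extracted from the twice-integrated equation \eqref{eq_vfie_template} using $b>0$ there, and on $\Omega_2$ from the once-integrated equation \eqref{eq_dist1dim} with $W^\pi(\eta)$ vanishing and $a+a_\pi\neq 0$; the paper merely packages the decomposition as $\eta_c:=\eta\big|_{\Omega_1\cup\Omega_2}$ instead of via the global Lebesgue decomposition, which is a cosmetic difference. As a minor remark, your appeals to local boundedness of $2/b$ on $\Omega_1$ and of $1/(a+a_\pi)$ on $\Omega_2$ are neither available (the coefficients are only measurable, so $\Omega_1$ need not even be open) nor needed -- a null-set argument using $b>0$, respectively $a+a_\pi\neq0$, pointwise already yields $\eta\ll\diff x$ there, which is what the paper implicitly does -- and the absolute continuity of the measure associated with $V^\pi(\eta)$, which you flag but do not verify, is settled in the paper by a short Fubini--Tonelli computation based on Lemma \ref{lem_onedimkernel}.
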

\begin{proof}
We prove that $\eta$ is absolutely continuous on $\Omega_1$ and $\Omega_2$. Setting  \(\eta_c:= \eta\big|_{\Omega_1\cup\Omega_2}\) then implies the statement.\\
First, note that as $F_1(x)=\frac12 b(x)>0 $ for all $x\in \Omega_1$, we derive from \eqref{eq_vfie_template} for all \(f\in\cC_c^\infty(\Omega_1)\) that 
\begin{align*}
	\int_\real f(x)\eta(\diff x) = \int_\real f(x) \frac{2}{b(x)} \left( F_2(x) + \int_{\real} \kappa_1(x,z) \eta(\diff z) + \int_{(0,x]} \kappa_2(x,z)\eta(\diff z)\right) \diff x,
\end{align*}
where the term in brackets on the right-hand is locally integrable. Thus $\eta$ is absolutely continuous on $\Omega_1$.\\
Second, by assumption, \(U^\pi(\eta)\) and \(V^\pi(\eta)\) have associated real-valued Radon measures on \(\Omega_2\). Denoting the associated measure of $V^\pi(\eta)$ by $V$, and integrating \eqref{eq_dist1dim} once, on \(\Omega_2\) we obtain 
\begin{align*}
(a(x)+a_\pi(x)) \eta(\diff x) = \left(\langle U^\pi(\eta), \mathds{1}_{(0,x]} \rangle + c\right)\diff x + V(\diff x), \qquad x>0,
\end{align*}
for some constant \(c\in\real\), and a similar equality holds for $x\leq 0$.  Thus,
\begin{align*}
\int_\real f(x)\eta(\diff x) = \int_\real f(x) \frac{\langle U^\pi(\eta), \mathds{1}_{(0,x]} \rangle + c}{a(x)+a_\pi(x)} \diff x+ \int_\real f(x) \frac{1}{a(x)+a_\pi(x)} V(\diff x)
\end{align*}
for all \(f\in\cC_c^\infty(\Omega_2)\). By definition and Lemma \ref{lem_onedimkernel} we have 
\begin{align*}
	\langle V^\pi(\eta), f\rangle &= \int_\real \int_{y>0} f(x+y) \tilde{\mu}(x,y) \diff y \, \eta(\diff x) - \int_\real \int_{y<0} f(x+y) \tilde{\mu}(x,y) \diff y \, \eta(\diff x)\\
	&= \int_{y>0} \int_\real f(x+y) \tilde{\mu}(x,y)  \eta(\diff x)\, \diff y - \int_{y<0} \int_\real f(x+y) \tilde{\mu}(x,y) \eta(\diff x)\, \diff y \\
	&= \int_\real f(z) \int_\real \sgn(z-x) \tilde{\mu}(x,z-x) \eta(\diff x) \, \diff z,
\end{align*}
where in the penultimate step we used the Fubini-Tonelli theorem for the integrals w.r.t. the positive and negative part of $V$. Thus, $V$ is absolutely continuous which finishes the proof.
\end{proof}

%%%%%%%%%%%%%%%%%%%%%%%%%%%%%%%%%%%%%%%%%%%%%%%%%%%%%%%%%%%%%%%%%%%%%%%%%%%%%%%%%%%%%%%%%%%%%%%%%%%%%%%%%%%%%%%%%%%%%%%%%%%%%%%%%%%%%%%%%%%%%%%%%%%%%%%%%%%%%%%%%%%%%%%%%%%%%%%%%%%%%%%%%%%%%%%%%%%%%%%%%%%%%%%%%%%%%%%%%%%%%%%%%%%%%%%%%%%%
\subsection{On the space \(\cM(\cA,\pi)\)}\label{sec_cond}
%%%%%%%%%%%%%%%%%%%%%%%%%%%%%%%%%%%%%%%%%%%%%%%%%%%%%%%%%%%%%%%%%%%%%%%%%%%%%%%%%%%%%%%%%%%%%%%%%%%%%%%%%%%%%%%%%%%%%%%%%%%%%%%%%%%%%%%%%%%%%%%%%%%%%%%%%%%%%%%%%%%%%%%%%%%%%%%%%%%%%%%%%%%%%%%%%%%%%%%%%%%%%%%%%%%%%%%%%%%%%%%%%%%%%%%%%%%%

Although we have not been able to find an example of a Lévy-type operator \((\cA,\domain)\) with an infinitesimally invariant measure \(\eta\) such that \(\eta\notin\cM(\cA,\pi)\), the assumption that \(\eta\in\cM(\cA,\pi)\) in Theorem \ref{thm_distEQ} is not automatically fulfilled for all measures one might wish to consider. For instance, let \(\eta(\diff x)=\bone_{\{x>0\}} x^{-1/2}\diff x\) and \((a+a_\pi)(x)=\bone_{\{x>0\}} x^{-1/2}\). Then \(((a+a_\pi)\eta)'\) is not a distribution.\\

Nonetheless, from Lemma \ref{lem_abcM} we immediately obtain the following simple conditions under which the terms \(\nabla\cdot((a+a_\pi)\eta)\) and \(\frac12\nabla\cdot b\nabla\eta\) define distributions. Recall that we write $g\eta \in  \cD'^k$ if $g\eta$ defines a distribution via \eqref{eq_radon}, i.e. if $f\mapsto  \int f(x) g(x) \eta(\diff x)$
is a distribution in $\cD'^k$. 

\begin{corollary}\label{cor_etainM}
	Let \(\eta\) be a real-valued Radon measure on \(\real^d\).
	\begin{enumerate}[resume]
		\item If \(a+a_\pi\in L^1_\loc(\eta)\) then \(\nabla \cdot ((a+a_\pi)\eta\in \cD'^1\).
		\item If \(b\in W^{1,1}_\loc(\eta)\), then \(\frac12 \nabla \cdot b \nabla \eta\in \cD'^2\). 
	\end{enumerate}
\end{corollary}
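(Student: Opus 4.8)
The plan is to obtain both statements as immediate consequences of Lemma \ref{lem_abcM}, the only extra ingredient being the elementary bookkeeping fact that a single distributional partial derivative raises the order of a distribution by at most one, and that a finite sum of distributions of order at most $k$ is again of order at most $k$. Both facts are read off directly from the characterisation \eqref{def_distr}: if $T\in\cD'^k$ and $f\in\cD$ is supported in a compact set $K$, then $|\langle\partial_i T,f\rangle|=|\langle T,\partial_i f\rangle|\le C_K\max_{|\alpha|\le k}\|\partial^\alpha\partial_i f\|_\infty\le C_K\max_{|\beta|\le k+1}\|\partial^\beta f\|_\infty$, so $\partial_i T\in\cD'^{k+1}$, while the triangle inequality handles finite sums.

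For (i), I would write $a+a_\pi=((a+a_\pi)_i)_{i=1,\dots,d}$ and read $\nabla\cdot((a+a_\pi)\eta)$ as the distribution $\sum_{i=1}^d\partial_i\big((a+a_\pi)_i\,\eta\big)$. The hypothesis $a+a_\pi\in L^1_\loc(\eta)$ means $(a+a_\pi)_i\in L^1_\loc(\eta)$ for each $i$, so Lemma \ref{lem_abcM}(i) applied to $g=(a+a_\pi)_i$ gives $(a+a_\pi)_i\,\eta\in\cD'^0$. One distributional derivative then puts each summand in $\cD'^1$, and the finite sum remains in $\cD'^1$; hence $\nabla\cdot((a+a_\pi)\eta)\in\cD'^1$.

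For (ii), I would expand $\frac12\nabla\cdot b\nabla\eta=\frac12\sum_{i,j=1}^d\partial_i\big(b_{ij}\,\partial_j\eta\big)$. Since $b_{ij}\in W^{1,1}_\loc(\eta)$ for every pair $(i,j)$, Lemma \ref{lem_abcM}(ii) yields $b_{ij}\,\partial_j\eta\in\cD'^1$; applying the outer $\partial_i$ sends each term into $\cD'^2$, and summing over the finitely many pairs leaves the result in $\cD'^2$, i.e. $\frac12\nabla\cdot b\nabla\eta\in\cD'^2$.

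There is essentially no obstacle here: the substance is entirely packaged in Lemma \ref{lem_abcM}. The only points requiring care are invoking the correct clause of that lemma for each term and correctly accounting for the orders — namely that the extra outer divergence costs exactly one derivative, so that the first-order $a$-term lands in $\cD'^1$ while the second-order $b$-term lands in $\cD'^2$.
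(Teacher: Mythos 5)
Your argument is correct and is exactly the route the paper intends: the corollary is stated as an immediate consequence of Lemma \ref{lem_abcM}, and your proof simply spells out the componentwise application of that lemma together with the standard facts that a distributional derivative raises the order by at most one and that finite sums preserve the order bound. Nothing is missing.
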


Unfortunately, the jump functionals $U^\pi(\eta)$, $V^\pi(\eta)$ and $W^\pi(\eta)$ of a Lévy kernel \(\Pi\) w.r.t. a convenient decomposition \(\pi=(\nu,\mu,\rho)\) and a measure \(\eta\) are not necessarily distributions either. This is illustrated by the following counterexample.

\begin{example}\label{ex_counter}
	Let \(d=1\) and consider the kernel \(\Pi(x,\diff y)=\frac{|x|^{1+\alpha+\beta}\diff y}{1\vee|y|^{1+\alpha}}\) where \(\alpha,\beta\in(0,1)\).  Then $\int_{y\neq 0} \Pi(x, \diff y)<\infty$ and we can choose \(\pi=(\Pi,0,0)\) as convenient decomposition. \\
	Fix some real numbers \(a<b\). Then we obtain for all \(x>(1+b)\vee 0\)
	\begin{align*}
		\int_{[a-x,b-x]}\Pi(x,\diff y) & =  \frac{x^{1+\alpha+\beta}}\alpha ((x-b)^{-\alpha}-(x-a)^{-\alpha}) =\frac{x^{\beta}}\alpha \cdot \frac{\left(\frac{x}{x-b}\right)^\alpha - \left(\frac{x}{x-a}\right)^\alpha}{x^{-1}}.
	\end{align*} 
	L'Hospital's rule shows
	\begin{align*}
		\lim_{x\to\infty}\frac{\left(\frac{x}{x-b}\right)^\alpha - \left(\frac{x}{x-a}\right)^\alpha}{x^{-1}} &= \lim_{x\to\infty}\frac{\alpha\left(\frac{x}{x-b}\right)^{\alpha-1}\frac{b}{(x-b)^2} - \left(\frac{x}{x-a}\right)^{\alpha-1}\frac{a}{(x-a)^2}}{x^{-2}}\\
		&=\lim_{x\to\infty}\alpha\left(\frac{x}{x-b}\right)^{\alpha-1}\frac{bx^2}{(x-b)^2} - \alpha \left(\frac{x}{x-a}\right)^{\alpha-1}\frac{ax^2}{(x-a)^2}\\
		&=\alpha (b-a),
	\end{align*}
	which implies that \(\int_{[a-x,b-x]}\Pi(x,\diff y) \sim (b-a)x^\beta\) as  \(x\to \infty\). \\
	Therefore, choosing any probability measure \(\eta\) for which \(x\mapsto \bone_{\{x>1\}} |x|^\beta\) is not integrable, we observe that  
	\begin{align*}
		\langle U^\pi(\eta),\bone_{[a,b]}\rangle= \int_\real\Big(\int_{[a-x,b-x]}\Pi(x,\diff y) - \bone_{[a,b]}(x)\Pi(x,\real)\Big)\eta(\diff x)
	\end{align*}
	does not converge. This implies that \(U^\pi(\eta): \cB(\real^d)\to\real\) is not locally finite and hence \(U^\pi(\eta)\) can neither be a real-valued Radon measure, nor a distribution of order \(0\).\\ 
	Moreover, since \(a<b\) in the above consideration were chosen arbitrarily, we may even conclude that for all test function \(f\in \cC_c^\infty(\real)\) with \(f\geq0\) and \(\supp (f) \neq \emptyset\) it holds \(U^\pi(\eta)(f)=\infty\). Thus, \(U^\pi(\eta)\) cannot be a distribution of any order for the chosen measure $\eta$ and the chosen decomposition of $\Pi$. \\
	Still, the question remains whether another convenient decomposition can be found such that the jump functionals w.r.t. the new decomposition are distributions. However, if the passing to another decomposition is possible, we are essentially dealing with the same objects. \\
To see this, observe that for the given Lévy kernel another suitable choice would be given by $\pi'=(0,\Pi,0)$. However, via Lemma \ref{lem_onedimkernel} one observes that for this choice \(\nabla\cdot V^{\pi'}(\eta)=U^{\pi}(\eta)\) for any real-valued Radon measure for which either side is defined. Thus also $\nabla\cdot V^{\pi'}(\eta)$ cannot be a distribution of any order for  $\eta$ as above. 		
\end{example}

Example \ref{ex_counter} implies that - apart from the conditions stated in Corollary \ref{cor_etainM} - criteria for $\eta\in~\cM(\cA,\pi)$ are needed. We therefore provide sufficient conditions to guarantee that $U^\pi(\eta)$, ${\nabla\cdot V^\pi(\eta)}$, and \(\hess \cdot W^\pi(\eta)\) are distributions in Lemma \ref{lem_D1} and the subsequent corollaries.\\ 
Recall that for any real-valued Radon measure \(\eta\) there exist two positive Radon measures \(\eta_+,\eta_-\) such that \(\eta=\eta_+-\eta_-\), and that \(|\eta|:=\eta_+ +\eta_-\).

\begin{lemma}\label{lem_D1}
	Let \(\Pi\) be a \(d\)-dimensional Lévy kernel with convenient decomposition \(\pi=(\nu,\mu,\rho)\) and let \(\eta\) be a real-valued Radon measure on \(\real^d\). Denote the jump functionals of \(\Pi\) w.r.t. \(\pi\) and \(\eta\) by \(U^\pi(\eta),V^\pi(\eta),W^\pi(\eta)\).
	\begin{enumerate}
		\item Assume that 
		\begin{align}\label{eq_cond1}
			\int_{\real^d} \left(\nu(x,B(-x,r))+\bone_{\{|x|<r\}}\nu(x,\real^d)\right)|\eta|(\diff x) <\infty
		\end{align}
		for all $r>0$. Then \(U^\pi(\eta)\in\cD'^0\).
		\item Assume that 
		\begin{align}\label{eq_cond2}
			\int_{\real^d} \overline{\mu}(x,B(-x,r)) \eta(\diff x)<\infty
		\end{align}
		for all \(r>0\). Then \(V^\pi_{i}\in\cD'^0\) for every \(i\in\{1,\ldots,d\}\).
		\item Assume that
		\begin{align}\label{eq_cond3}
			\int_{\real^d} \overline{\overline\rho}(x,B(-x,r)) \eta(\diff x)<\infty
		\end{align}
		for all $r>0$. Then \(W^\pi_{ij}\in\cD'^0\) for every \(i,j\in\{1,\ldots,d\}\).
	\end{enumerate}
\end{lemma}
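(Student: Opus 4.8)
The plan is to verify, in each of the three cases, that the relevant functional satisfies the estimate \eqref{def_distr} with $N_K = 0$, i.e.\ that $|\langle \cdot, f\rangle| \le C_K \|f\|_\infty$ for every $f \in \cD(\real^d)$ supported in a fixed compact set $K$; by the Riesz representation discussion this yields membership in $\cD'^0$. Throughout I fix $r>0$ large enough that $K \subseteq \overline B(0,r-1)$ (say), and bound each functional by $\|f\|_\infty$ times a finite constant built from the hypotheses \eqref{eq_cond1}--\eqref{eq_cond3}.

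\emph{Part (i).} For $f$ supported in $K \subseteq B(0,r)$, write
\[
|\langle U^\pi(\eta), f\rangle| \le \int_{\real^d}\Big(\int_{\real^d}|f(x+y)|\,\nu(x,\diff y) + |f(x)|\,\nu(x,\real^d)\Big)|\eta|(\diff x).
\]
In the first inner integral $f(x+y)\ne 0$ forces $x+y \in K$, hence $y \in K - x \subseteq B(-x,r)$, so $\int |f(x+y)|\nu(x,\diff y) \le \|f\|_\infty\, \nu(x,B(-x,r))$; in the second, $f(x)\ne 0$ forces $|x|<r$, giving $\bone_{\{|x|<r\}}\nu(x,\real^d)\|f\|_\infty$. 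Summing and integrating against $|\eta|$ produces exactly the left-hand side of \eqref{eq_cond1} times $\|f\|_\infty$, which is finite by assumption, so $U^\pi(\eta)\in\cD'^0$.

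\emph{Parts (ii) and (iii).} These are analogous but one has to absorb the extra factors $\frac{y_i}{|y|}$ and $\frac{y_iy_j}{|y|^2}$, which is harmless since $|y_i/|y||\le 1$ and $|y_iy_j/|y|^2|\le 1$. For (ii), recalling $\langle V_i^\pi(\eta),f\rangle = \int\!\int f(x+y)\frac{y_i}{|y|}\overline\mu(x,\diff y)\,\eta(\diff x)$, the support condition $x+y\in K$ again gives $y\in B(-x,r)$, so $|\langle V_i^\pi(\eta),f\rangle| \le \|f\|_\infty \int \overline\mu(x,B(-x,r))\,\eta(\diff x)$, finite by \eqref{eq_cond2}. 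For (iii) the same argument with $\overline{\overline\rho}$ in place of $\overline\mu$ and \eqref{eq_cond3} in place of \eqref{eq_cond2} gives $|\langle W_{ij}^\pi(\eta),f\rangle|\le \|f\|_\infty\int\overline{\overline\rho}(x,B(-x,r))\,\eta(\diff x)<\infty$. (If $\eta$ is merely real-valued, replace $\eta$ by $|\eta|$ in these two estimates as in (i); the stated hypotheses then refer to $|\eta|$, or one splits into positive and negative parts.)

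The only genuinely delicate point is bookkeeping with the support constraint: one must be careful that the bound on the $y$-integral is uniform in $x$ only through the quantity $\nu(x,B(-x,r))$ (resp.\ $\overline\mu$, $\overline{\overline\rho}$), which is precisely why the hypotheses are phrased as integrals of $x \mapsto \nu(x,B(-x,r))$ against $\eta$ rather than as a naive product bound. Once this translation between ``$\supp f \subseteq K$'' and ``$y \in B(-x,r)$'' is made cleanly, everything reduces to the three stated finiteness assumptions, and there is no further obstacle.
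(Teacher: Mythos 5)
Your proposal is correct and follows essentially the same route as the paper's proof: fix $r>0$, use the support of $f$ to bound the inner integrals by $\|f\|_\infty$ times $\nu(x,B(-x,r))$ resp.\ $\bone_{\{|x|<r\}}\nu(x,\real^d)$, integrate against $|\eta|$, and invoke the order-$0$ characterization \eqref{def_distr}, with $|y_i/|y||\le 1$ and $|y_iy_j/|y|^2|\le 1$ handling (ii) and (iii). Your parenthetical remark about using $|\eta|$ in (ii) and (iii) is a fair (and harmless) clarification of the hypotheses as stated.
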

\begin{proof}
Fix \(r>0\) and consider \(f\in \cC_c(B(0,r))\), then
	\begin{align*}
		|\langle U^\pi(\eta), f\rangle |&\leq \int_{\real^d} \left(\int_{\real^d}|f (z)| \nu(x,\diff z-x) + |f(x)|\nu(x,\real^d)\right)|\eta|(\diff x)\\
		&\leq \|f\|_\infty\int_{\real^d} \left(\nu(x,B(-x,r)) + \bone_{B(0,r)} (x)  \nu(x,\real^d)\right)|\eta|(\diff x).
	\end{align*}   
	By the characterization \eqref{def_distr} this implies that \(U^\pi(\eta)\) is a distribution of order \(0\).\\
		Assertions (ii) and (iii) are shown similarly using that  \(\left|\frac{y_i}{|y|}\right|\leq1\) and \(\left|\frac{(y_i)(y_j)}{|y|^2}\right|\leq1\) for all \(y\in\real^d\).
\end{proof}

Conditions \eqref{eq_cond2} and \eqref{eq_cond3} are formulated in terms of the integrated radial tail \(\overline\mu\) and the double-integrated radial tail \(\overline{\overline\rho}\), respectively. The following lemma presents two similar conditions that are stated in terms of \(\mu\) and \(\rho\) directly.

\begin{lemma}\label{cor_jumpII}
	Let \(\Pi\) be a \(d\)-dimensional Lévy kernel with convenient decomposition \(\pi=(\nu,\mu,\rho)\) and let \(\eta\) be a real-valued Radon measure on \(\real^d\). Denote the jump functionals of \(\Pi\) w.r.t. \(\pi\) and \(\eta\) by \(U^\pi(\eta),V^\pi(\eta),W^\pi(\eta)\).
	\begin{enumerate}
		\item If \(\int_{\real^d} \int_{\real^d} (1\wedge |y|)\mu(x,\diff y)\, \eta(\diff x)<\infty\), then \(V_i(\eta)\in\cD'^0\) for every \(i\in\{1,\ldots,d\}\).
		\item  If \(\int_{\real^d} \int_{\real^d} (|y|\wedge |y|^2)\rho(x,\diff y)\, \eta(\diff x)<\infty\), then \(W_{ij}(\eta)\in\cD'^0\) for all \(i,j\in\{1,\ldots,d\}\).
	\end{enumerate} 
\end{lemma}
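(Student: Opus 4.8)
The strategy is to deduce both assertions from Lemma~\ref{lem_D1}: for (i) it suffices to verify condition~\eqref{eq_cond2}, and for (ii) condition~\eqref{eq_cond3}, since $V_i(\eta)\in\cD'^0$, respectively $W_{ij}(\eta)\in\cD'^0$, then follows directly from that lemma. So the whole task is to convert the stated integrability hypotheses on $\mu$ and $\rho$ into the tail conditions on $\overline\mu$ and $\overline{\overline\rho}$.

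The first step is to rewrite the (double-)integrated radial tails directly in terms of $\mu$ and $\rho$, eliminating the polar decomposition. Starting from Definition~\ref{def_inttail}, interchanging the $\diff z$-integral(s) with the radial measure by Tonelli's theorem---all integrands are nonnegative, so one simply writes $\int_{(z,\infty)}\mu_\xi(x,\diff r)=\int_{(0,\infty)}\bone_{\{r>z\}}\mu_\xi(x,\diff r)$, and analogously, in two steps, for $\rho$---and then substituting $z=s|y|$ (for $\overline{\overline\rho}$ one first performs the $z_1$-integration over $(z_2,|y|)$, producing a factor $|y|-z_2$, before substituting $z_2=s|y|$), one arrives at
\[ \overline\mu(x,B)=\int_{\real^d}|y|\int_0^1\bone_B(sy)\,\diff s\,\mu(x,\diff y),\qquad \overline{\overline\rho}(x,B)=\int_{\real^d}|y|^2\int_0^1\bone_B(sy)(1-s)\,\diff s\,\rho(x,\diff y) \]
for all $x\in\real^d$ and $B\in\cB(\real^d)$. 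This is the computation already appearing in the proof of well-definedness of $\overline\Pi$, carried one substitution further and retaining the Jacobian factors $|y|$ and $|y|^2$; in particular the right-hand sides depend on $\mu$ and $\rho$ only, which re-confirms independence of the polar decomposition, and for $d=1$ they reduce to the densities of Lemma~\ref{lem_onedimkernel}.

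The second step is a chord-length estimate. For $y\neq0$ the function $s\mapsto|sy+x|^2=|y|^2s^2+2(x\cdot y)s+|x|^2$ is a quadratic in $s$ with positive leading coefficient, so $\{s\in\real:|sy+x|<r\}$ is an interval (possibly empty) whose Lebesgue measure equals $\frac{2}{|y|^2}\sqrt{(x\cdot y)^2-|y|^2|x|^2+|y|^2r^2}$, hence is at most $\frac{2r}{|y|}$ by the Cauchy--Schwarz inequality $(x\cdot y)^2\le|x|^2|y|^2$. Taking $B=B(-x,r)$ in the formulas above and using in addition the trivial bound $\int_0^1\bone_B(sy)\,\diff s\le1$, one obtains
\[ \overline\mu(x,B(-x,r))\le\int_{\real^d}(|y|\wedge2r)\,\mu(x,\diff y),\qquad \overline{\overline\rho}(x,B(-x,r))\le\int_{\real^d}(|y|^2\wedge2r|y|)\,\rho(x,\diff y). \]
Since $|y|\wedge2r\le\max(1,2r)(1\wedge|y|)$ and $|y|^2\wedge2r|y|\le\max(1,2r)(|y|\wedge|y|^2)$---both verified by distinguishing $|y|\le1$ and $|y|>1$---integrating these against $\eta$ (against $|\eta|$ if $\eta$ is signed) turns the hypothesis $\int_{\real^d}\int_{\real^d}(1\wedge|y|)\mu(x,\diff y)\,\eta(\diff x)<\infty$ into~\eqref{eq_cond2}, and $\int_{\real^d}\int_{\real^d}(|y|\wedge|y|^2)\rho(x,\diff y)\,\eta(\diff x)<\infty$ into~\eqref{eq_cond3}; Lemma~\ref{lem_D1}(ii)--(iii) then completes the proof.

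The one point requiring real care is the first step: one must check that all the Tonelli interchanges are legitimate (they are, everything being nonnegative) and, more importantly, that the resulting representations involve $\mu$ and $\rho$ alone, so that the arbitrary choice of the radial parts $(\Pi_\xi)_\xi$ drops out---this is exactly the bookkeeping already done in the well-definedness proof. Everything downstream of that identity is elementary: a one-line discriminant computation and a case distinction.
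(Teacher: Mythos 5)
Your proof is correct, and at the structural level it follows the same route as the paper: both arguments verify conditions \eqref{eq_cond2} and \eqref{eq_cond3} through a pointwise-in-$x$ bound and then invoke Lemma \ref{lem_D1}(ii)--(iii), and your intermediate estimates $\overline\mu(x,B(-x,r))\le\int_{\real^d}(2r\wedge|y|)\,\mu(x,\diff y)$ and $\overline{\overline\rho}(x,B(-x,r))\le\int_{\real^d}(2r|y|\wedge|y|^2)\,\rho(x,\diff y)$ are, up to constants, exactly the bounds the paper arrives at. Where you differ is in how that bound is produced: the paper stays in polar coordinates and uses the inclusion $B(-x,r)\subset B(0,|x|+r)\setminus\overline B(0,(|x|-r)\vee 0)$, so the radial variable is confined to an interval of length $2r$; you instead first derive the decomposition-free Cartesian representations $\overline\mu(x,B)=\int_{\real^d}|y|\int_0^1\bone_B(sy)\,\diff s\,\mu(x,\diff y)$ and $\overline{\overline\rho}(x,B)=\int_{\real^d}|y|^2\int_0^1\bone_B(sy)(1-s)\,\diff s\,\rho(x,\diff y)$ and bound the time the segment $s\mapsto sy$ spends in $B(-x,r)$ by $2r/|y|$ via the discriminant/Cauchy--Schwarz computation. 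Both computations are valid; yours has the side benefit of an explicit formula for the (double-)integrated radial tails that makes independence of the polar decomposition transparent, and you correctly retain the Jacobian factors $|y|$ and $|y|^2$ there (the corresponding displayed identity in the paper's well-definedness lemma omits the factor $|y|$, so your version is the one to trust), while the paper's annulus argument gets by without any change of variables. The only caveat, which you already flag yourself, is that for signed $\eta$ the hypotheses and \eqref{eq_cond2}--\eqref{eq_cond3} should be read with $|\eta|$; this matches the paper's own slightly loose usage and is not a gap.
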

\begin{proof}
	For this proof we use the notational convention \(\int_a^b:=\int_{(a,b)}\) to avoid lengthy formulas. \\
	By the definition of \(\overline\mu\) and the fact that \(B(-x,r)\subset B(0,|x|+r)\setminus \overline B(0,(|x|-r)\vee 0)\) we obtain 
	\begin{align*}
		\overline\mu(x,B(-x,r))&= \int_{S^{d-1}}\int_{0}^{\infty} \bone_{B(-x,r)}(z_1 \xi ) \int_{z_1}^{\infty} \mu_\xi(x,\diff z_2)\, \diff z_1 \, \mu_o(x,\diff \xi)\\
		&\leq\int_{S^{d-1}}\int_{(|x|-r) \vee 0}^{|x|+r} \int_{z_1}^\infty \mu_\xi (x,\diff z_2)\, \diff z_1 \, \mu_o(x,\diff \xi).
	\end{align*}
	Swapping the order of the inner two integrals using Tonelli's theorem this implies 
	\begin{align*}
		\overline\mu(x,B(-x,r)) &\leq \int_{S^{d-1}}\int_{(|x|-r)\vee 0}^\infty \int_{(|x|-r)\vee 0}^{(|x|+r)\wedge z_2} \diff z_1\,  \mu_\xi(x,\diff z_2)\,  \mu_o(x,\diff \xi)\\
		&\leq \int_{S^{d-1}}\int_0^\infty (2r \wedge z_2) \mu_\xi (x,\diff z_2) \, \mu_o(x,\diff \xi )\\ 
		&\leq C\int_{\real^d} (1\wedge|y|) \mu(x,\diff y)
	\end{align*}
	for some \(C>0\). Thus, the condition in (i) implies \eqref{eq_cond2} which proves the claim.\\
	For (ii) we use a similar approach and compute 
	\begin{align*}
		\overline{\overline\rho}(x,B(-x,r))& =\int_{S^{d-1}}\int_{0}^\infty \bone_{B(-x,r)}(z_1 \xi)\int_{z_1}^\infty\int_{z_2}^\infty \rho_\xi (x,\diff z_3)\, \diff z_2\, \diff z_1\, \rho_{o}(\diff \xi)\\
		& \leq  \int_{S^{d-1}}\int_{(|x|-r) \vee 0}^{|x|+r} \int_{z_1}^\infty \int_{z_2}^\infty \rho_\xi(x,\diff z_3)\, \diff z_2\, \diff z_1\, \rho_{o}(\diff \xi)\\
 &=  \int_{S^{d-1}}\int_{(|x|-r) \vee 0}^\infty \int_{(|x|-r)\vee 0}^{z_3}\int_{(|x|-r)\vee 0}^{(|x|+r)\wedge z_2} \diff z_1\, \diff z_2\, \rho_\xi(x,\diff z_3)\, \rho_{o}(\diff \xi) \\
 & \leq  \int_{S^{d-1}}\int_{(|x|-r) \vee 0}^\infty \int_{0}^{z_3} (2r \wedge z_2) \diff z_2\, \rho_\xi(x,\diff z_3)\, \rho_{o}(\diff \xi) \\
		&\leq \int_{S^{d-1}}\int_{0}^\infty \left( 2rz_3 \wedge \frac{z_3^2}2  \right) \rho_\xi (x,\diff z_3)\, \rho_{o}(\diff \xi) \\
		&\leq C \int_{\real^d}(|y|\wedge|y|^2)\rho(x,\diff y)
	\end{align*}
	for some \(C>0\). Therefore, the condition in (ii) implies \eqref{eq_cond3} and the proof is complete.
\end{proof}

We end this section with two corollaries that cover special cases. First we consider Lévy kernels that are space-invariant.

\begin{corollary}\label{cor_constantPi}
	Let \(\Pi\) be a \(d\)-dimensional Lévy measure, i.e. a Lévy kernel independent of \(x\), and let \(\pi=(\nu,\mu,\rho)\) be a convenient decomposition of \(\Pi\). Then for any finite measure \(\eta\) on \(\real^d\) the jump functionals \(U^\pi(\eta),V^\pi_i(\eta),W^\pi_{ij}(\eta)\) are in \(\cD'^0\) for every \(i,j\in\{1,\ldots,d\}\). 
\end{corollary}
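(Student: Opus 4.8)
The plan is to reduce everything to Lemma \ref{lem_D1} and Lemma \ref{cor_jumpII} by exploiting that the Lévy kernel is constant in $x$. Since $\Pi$ does not depend on $x$, the convenient decomposition $\pi=(\nu,\mu,\rho)$ consists of three fixed Lévy measures, and the integrability properties (i)--(iii) in Definition \ref{def_decompose} hold with constants independent of $x$. The key observation is that for a \emph{finite} measure $\eta$, each of the conditions \eqref{eq_cond1}, and the hypotheses of Lemma \ref{cor_jumpII}, reduces to checking that a single $x$-independent quantity is finite and then multiplying by $\eta(\real^d)<\infty$.

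First I would handle $U^\pi(\eta)$. Since $\nu=\nu(\diff y)$ is a fixed finite measure (property (i) of the convenient decomposition), we have $\nu(x,\real^d)=\nu(\real^d)=:c_\nu<\infty$ and $\nu(x,B(-x,r))\le\nu(\real^d)=c_\nu$ for every $x$ and every $r>0$. Hence
\begin{align*}
\int_{\real^d}\bigl(\nu(x,B(-x,r))+\bone_{\{|x|<r\}}\nu(x,\real^d)\bigr)|\eta|(\diff x)\le 2c_\nu\,|\eta|(\real^d)<\infty,
\end{align*}
so \eqref{eq_cond1} holds and Lemma \ref{lem_D1}(i) gives $U^\pi(\eta)\in\cD'^0$.

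Next, for $V^\pi_i(\eta)$ I would verify the hypothesis of Lemma \ref{cor_jumpII}(i): since $\mu$ is $x$-independent with $\int_{\real^d}(1\wedge|y|)\mu(\diff y)=:c_\mu<\infty$ by property (ii), we get
\begin{align*}
\int_{\real^d}\int_{\real^d}(1\wedge|y|)\mu(x,\diff y)\,\eta(\diff x)=c_\mu\,\eta(\real^d)<\infty,
\end{align*}
hence $V^\pi_i(\eta)\in\cD'^0$ for all $i$. Analogously, using property (iii), $\int_{\real^d}(|y|\wedge|y|^2)\rho(\diff y)=:c_\rho<\infty$, so $\int_{\real^d}\int_{\real^d}(|y|\wedge|y|^2)\rho(x,\diff y)\,\eta(\diff x)=c_\rho\,\eta(\real^d)<\infty$, and Lemma \ref{cor_jumpII}(ii) yields $W^\pi_{ij}(\eta)\in\cD'^0$ for all $i,j$. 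This finishes the proof.

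I do not anticipate a genuine obstacle here: the statement is essentially a sanity check that the previously developed sufficient conditions are satisfied in the simplest (Lévy-measure) setting with the mildest assumption on $\eta$. The only point requiring a little care is making sure one invokes the right one of the two available sufficient conditions for $V$ and $W$ — Lemma \ref{cor_jumpII} is the convenient one since it is phrased directly in terms of $\mu$ and $\rho$ rather than their (double-)integrated radial tails — and confirming that the bound $\nu(x,B(-x,r))\le\nu(\real^d)$ is all that is needed for the $U$-term, so that no tail decay of $\nu$ is required beyond finiteness.
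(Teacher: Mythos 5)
Your proposal is correct and follows essentially the same route as the paper: bound the integrand in \eqref{eq_cond1} by $2\nu(\real^d)$ and invoke Lemma \ref{lem_D1}(i) for $U^\pi(\eta)$, then apply Lemma \ref{cor_jumpII} for $V^\pi_i(\eta)$ and $W^\pi_{ij}(\eta)$ using the $x$-independence of $\int(1\wedge|y|)\mu(\diff y)$ and $\int(|y|\wedge|y|^2)\rho(\diff y)$ together with the finiteness of $\eta$. No gaps.
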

\begin{proof}
	Note that in the present situation 
	\begin{align*}
		\nu(B(-x,r)) + \bone_{\{|x|<r\}}\nu(\real^d) \leq 2\nu(\real^d)
	\end{align*}
	for all \(x\in\real^d\). Thus the integrand in \eqref{eq_cond1} is bounded, and hence the integral converges. This implies  $U^\pi(\eta)\in \cD'^0$. For the other two jump functionals we may apply Corollary \ref{cor_jumpII}    observing that both \(\int_{\real^d} (1\wedge|y|) \mu(\diff y)\) and \(\int_{\real^d}(|y|\wedge|y|^2)\rho(\diff y)\) are finite and independent of \(x\).
\end{proof}

Lastly we consider Lévy kernels of a form as it often appears in the context of SDEs; see Section~\ref{sec_sde} below for more details.

\begin{corollary}\label{cor_sdefinite}
	Let \(\phi:\real^d\to\real^{d\times n}\) be a function and let \(\Pi\) be a finite \(n\)-dimensional Lévy measure. For \(x\in\real^d\) and \(B\in\cB(\real^d)\) set
	 $$\Pi_\phi(x,B):=\begin{cases}\Pi(\{y:\in\real^d; \phi(x)y\in B\}), &\phi(x)\neq0\\ 0, & \phi(x)=0.\end{cases}$$ 
	Then \(\pi=(\Pi_\phi,0,0)\) is a convenient decomposition of the Lévy kernel \(\Pi_\phi\).
  If \(\eta\) is a finite measure on \(\real^d\), then the jump functional \(U^\pi(\eta)\) of \(\Pi_\phi\) w.r.t. \(\pi\) and \(\eta\) is in \(\cD'^0\).
\end{corollary}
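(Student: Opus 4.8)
The plan is to apply Lemma~\ref{lem_D1}(i), so the goal is to verify condition~\eqref{eq_cond1} for the kernel $\Pi_\phi$ and the finite measure $\eta$. Since $\eta$ is finite (hence $|\eta|=\eta$ is finite) and $\Pi$ is a \emph{finite} Lévy measure, one expects the integrand in \eqref{eq_cond1} to be bounded, exactly as in the proof of Corollary~\ref{cor_constantPi}. First I would observe that $\pi=(\Pi_\phi,0,0)$ is indeed convenient: by construction $\Pi_\phi(x,\real^d)\le \Pi(\real^n)<\infty$ for every $x$ (it is $0$ when $\phi(x)=0$), so part~(i) of Definition~\ref{def_decompose} holds, and parts~(ii) and (iii) are vacuous as $\mu=\rho=0$. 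Measurability of $x\mapsto \Pi_\phi(x,B)$ follows from measurability of $\phi$ and of $\Pi$, so $\Pi_\phi$ is a genuine Lévy kernel.

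Next I would estimate the two terms appearing in \eqref{eq_cond1}. For the second term, $\bone_{\{|x|<r\}}\Pi_\phi(x,\real^d)\le \Pi(\real^n)$ for all $x$. For the first term, $\Pi_\phi(x,B(-x,r))=\Pi(\{y\in\real^n:\phi(x)y\in B(-x,r)\})\le \Pi(\real^n)$ whenever $\phi(x)\neq 0$, and equals $0$ when $\phi(x)=0$; in either case it is bounded by $\Pi(\real^n)$. Hence the integrand in \eqref{eq_cond1} is bounded by $2\Pi(\real^n)$, and since $\eta$ is finite the integral is at most $2\Pi(\real^n)\,\eta(\real^d)<\infty$ for every $r>0$. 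By Lemma~\ref{lem_D1}(i) this yields $U^\pi(\eta)\in\cD'^0$.

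There is essentially no obstacle here beyond being careful with the case distinction $\phi(x)=0$ in the definition of $\Pi_\phi$; the argument is a direct specialization of the reasoning behind Corollary~\ref{cor_constantPi}, the only new point being that $\Pi_\phi(x,\cdot)$ is, for each fixed $x$, a pushforward (or the zero measure) of the fixed finite measure $\Pi$ and is therefore uniformly bounded in total mass by $\Pi(\real^n)$, independently of $x$. I would write the proof in two or three lines, invoking Lemma~\ref{lem_D1}(i) after the boundedness estimate.

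\begin{proof}
	First, $\Pi_\phi$ is a Lévy kernel: measurability of $x\mapsto\Pi_\phi(x,B)$ follows from the measurability of $\phi$ and of $\Pi$, and for each fixed $x$ the measure $\Pi_\phi(x,\cdot)$ is either the pushforward of the finite measure $\Pi$ under $y\mapsto\phi(x)y$ or the zero measure; in particular $\Pi_\phi(x,\real^d)\leq\Pi(\real^n)<\infty$ for all $x\in\real^d$. Hence condition~(i) of Definition~\ref{def_decompose} holds while (ii) and (iii) are void, so $\pi=(\Pi_\phi,0,0)$ is a convenient decomposition of $\Pi_\phi$.\\
	Next, for every $x\in\real^d$ and $r>0$ we have $\Pi_\phi(x,B(-x,r))\leq\Pi_\phi(x,\real^d)\leq\Pi(\real^n)$ and $\bone_{\{|x|<r\}}\Pi_\phi(x,\real^d)\leq\Pi(\real^n)$, so the integrand in \eqref{eq_cond1} is bounded by $2\Pi(\real^n)$. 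Since $\eta$ is finite, the integral in \eqref{eq_cond1} is therefore at most $2\Pi(\real^n)\,\eta(\real^d)<\infty$ for all $r>0$. By Lemma~\ref{lem_D1}(i) it follows that $U^\pi(\eta)\in\cD'^0$.
\end{proof}
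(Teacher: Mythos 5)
Your proof is correct and follows essentially the same route as the paper's: bound $\Pi_\phi(x,B(-x,r))$ and $\Pi_\phi(x,\real^d)$ uniformly by $\Pi(\real^n)$, use finiteness of $\eta$ to verify \eqref{eq_cond1}, and conclude via Lemma~\ref{lem_D1}(i). Your version merely spells out a few details the paper leaves implicit (measurability of $x\mapsto\Pi_\phi(x,B)$ and the case $\phi(x)=0$), which is fine.
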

\begin{proof}
	From \(\Pi\) being finite it follows that \(\Pi_\phi(x,\cdot)\) is finite for all \(x\in\real^d\). Thus, \(\pi\) is a convenient decomposition of \(\Pi_\phi\). 
	Moreover, \(\Pi_\phi(x,B(-x,r))\leq \Pi(\real^n)<\infty\) for all $r>0$ which	implies that \(x\mapsto \Pi_\phi(x,B(-x,r))+\bone_{\{|x|<r\}}\Pi_\phi(\real^d)\) is bounded. Thus, Lemma \ref{lem_D1} (i) applies which proves the claim.
\end{proof}

%%%%%%%%%%%%%%%%%%%%%%%%%%%%%%%%%%%%%%%%%%%%%%%%%%%%%%%%%%%%%%%%%%%%%%%%%%%%%%%%%%%%%%%%%%%%%%%%%%%%%%%%%%%%%%%%%%%%%%%%%%%%%%%%%%%%%%%%%%%%%%%%%%%%%%%%%%%%%%%%%%%%%%%%%%%%%%%%%%%%%%%%%%%%%%%%%%%%%%%%%%%%%%%%%%%%%%%%%%%%%%%%%%%%%%%%%%%%
\section{Invariant measures of Markov processes associated to Lévy-type operators}\label{sec_processes}
\setcounter{equation}{0}
%%%%%%%%%%%%%%%%%%%%%%%%%%%%%%%%%%%%%%%%%%%%%%%%%%%%%%%%%%%%%%%%%%%%%%%%%%%%%%%%%%%%%%%%%%%%%%%%%%%%%%%%%%%%%%%%%%%%%%%%%%%%%%%%%%%%%%%%%%%%%%%%%%%%%%%%%%%%%%%%%%%%%%%%%%%%%%%%%%%%%%%%%%%%%%%%%%%%%%%%%%%%%%%%%%%%%%%%%%%%%%%%%%%%%%%%%%%%

This section deals with the application of our results to Markov processes associated to Lévy-type operators. As mentioned in the introduction, this class of processes contains many important cases such as Feller processes with sufficiently rich domain, or solutions of SDEs driven by Lévy processes.\\

Throughout this section, let \((X_t)_{t\geq0}\) be a time-homogeneous Markov process in \(\real^d\) with transition kernels \((p_t)_{t\geq0}\) given by
\begin{align*}
p_t(x,B):=\PP^{ x}(X_t\in B)=\EE^x\left[\bone_B(X_t)\right] := \EE[\bone_B(X_t)|X_0=x], \quad t\geq0, x\in\real^d, B\in \cB(\real^d),
\end{align*}
and associated transition semigroup 
\begin{align*}
	P_tf(x):=\EE^x\left[f(X_t)\right], \quad t\geq 0, x\in\real^d, f\in \cC_\infty(\real^d). 
\end{align*}
It is well-known that \((P_t)_{t\geq0}\) forms a contractive semigroup, i.e. \(\|P_t f\|_\infty \leq \|f\|_\infty \) for all \(f\in\cC_\infty(\real^d)\) and \(t\geq0\), and that the Markov property implies \(P_sP_t=P_{s+t}\) for all \(0\leq s\leq t\). 
The \textbf{pointwise (infinitesimal) generator} of \((X_t)_{t\geq0}\) is the pair \((\cA,\domain)\) defined by 
\begin{align*}
\cA f(x)&:=\lim_{t\downarrow 0}\frac{P_tf(x)-f(x)}{t}, \quad x\in\real^d, f\in\domain,
\end{align*} 
where
\begin{align}\label{eq_infdomain}
\domain&:=\left\{f\in \cC_\infty(\real^d): \lim_{t\downarrow 0}\frac{P_tf(x)-f(x)}{t} \text{ exists for all } x\in\real^d \right \}.
\end{align}

Assuming additionally that  \((X_t)_{t\geq0}\) is a \textbf{Feller process}, i.e. that the associated transition semigroup has the Feller properties
\begin{enumerate}
\item \(P_t f\in \cC_\infty(\real^d)\) for all \(t\geq0, f\in \cC_\infty(\real^d)\), and
\item \(\lim_{t\downarrow 0} \|P_tf-f\|_\infty=0\) for all \(f\in \cC_\infty(\real^d)\),
\end{enumerate}
the pair \((\ccA,\mathscr{D}(\ccA))\) defined by
\begin{align*}
\ccA f(x)&:=\lim_{t\downarrow 0}\frac{P_tf(x)-f(x)}{t}, \quad x\in\real^d, f\in\mathscr{D}(\ccA),
\end{align*} 
where
\begin{align*}
\mathscr{D}(\ccA)&:=\left\{f\in \cC_\infty(\real^d): \lim_{t\downarrow 0}\frac{P_tf(x)-f(x)}{t} \text{ exists in } \|\cdot\|_\infty \right \},
\end{align*}
is called the \textbf{Feller generator} of \((X_t)_{t\geq0}\).  Clearly, $\mathscr{D}(\ccA) \subset \domain$ and \(\ccA= \cA|_{\mathscr{D}(\ccA)}\) since uniform convergence implies pointwise convergence. \\
A Feller process is called \textbf{rich} if \(\test\subset\mathscr{D}(\ccA)\). In this case,  by the Courrège-von Waldenfels theorem \cite[Thm. 2.21]{bottcher}, the operator  \((\ccA|_{\test},\test)\) is a Lévy-type operator. In some sources, e.g. \cite{sandric}, rich Feller processes are therefore also called \textbf{Lévy-type processes}. However, since other authors define Lévy-type processes via its probabilistic symbol, cf. \cite{bottcher,schnurr2}, yielding a larger class of processes, or as a solution to martingale problems of Lévy-type operators, cf. \cite{wang, wang2}, we avoid this terminology. \\

Let \(\eta\) be a (positive) measure on \(\real^d\). Then $\eta$ is called \textbf{invariant} for the Markov process \((X_t)_{t\geq0}\) in \(\real^d\) if and only if
\begin{align}\label{eq_definv}
\int_{\real^d}p_t(x,B)\eta(\diff x) = \eta(B) \quad \text{for all }t\geq0, B\in\cB(\real^d).
\end{align}
If \((X_t)_{t\geq0}\) is a Feller process with Feller generator \((\ccA,\mathscr{D}(\ccA))\) and \(\eta\) is finite, it is further well-known, cf. [Liggett, Thm 3.37],  that \eqref{eq_definv} is equivalent to
\begin{align}\label{eq_equiv_inv}
\int_{\real^d} \ccA f(x)\eta(\diff x)=0\quad \text{for all }f\in D,
\end{align}
where \(D\) is a core of \((\ccA,\mathscr{D}(\ccA))\). Together with Theorem \ref{thm_distEQ} this implies the following corollary.

\begin{corollary}\label{cor_richFellerprocess}
	Let \((X_t)_{t\geq0}\) be a rich Feller process such that its generator \((\ccA,\mathscr{D}(\ccA))\) is a Lévy-type operator in $\real^d$ with characteristic triplet \((a,b,\Pi)\) and convenient decomposition \(\pi=(\nu,\mu,\rho)\) of the Lévy kernel \(\Pi\). Let \(\eta\in\cM(\cA,\pi)\) be finite. 
	If a core of \((\ccA,\mathscr{D}(\ccA))\) is contained in \(\test\), and \(\eta\) solves \eqref{eq_distributional}, then \(\eta\) is invariant for \((X_t)_{t\geq0}\).
	Conversely, if $\eta$ is invariant for \((X_t)_{t\geq0}\), then it solves \eqref{eq_distributional}.
\end{corollary}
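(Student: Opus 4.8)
The plan is to deduce Corollary~\ref{cor_richFellerprocess} by combining the abstract equivalence between \eqref{eq_definv} and \eqref{eq_equiv_inv} with the characterization of infinitesimal invariance provided by Theorem~\ref{thm_distEQ}. First I would record the hypotheses: \((X_t)_{t\geq0}\) is a rich Feller process whose Feller generator \((\ccA,\mathscr{D}(\ccA))\) is a Lévy-type operator with triplet \((a,b,\Pi)\), we are given a convenient decomposition \(\pi=(\nu,\mu,\rho)\), a finite measure \(\eta\in\cM(\cA,\pi)\), and a core \(D\) of \((\ccA,\mathscr{D}(\ccA))\) with \(D\subset\test\). Since \((X_t)_{t\geq0}\) is rich we have \(\test\subset\mathscr{D}(\ccA)\subset\domain\), so in particular the standing assumption \(\test\subset\domain\) of Theorem~\ref{thm_distEQ} is met and \(\ccA f=\cA f\) for all \(f\in\test\).

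For the first (``if'') implication, suppose \(\eta\) solves the distributional equation \eqref{eq_distributional}. By Theorem~\ref{thm_distEQ}, applied to the Lévy-type operator \((\ccA|_{\test},\test)\) and the decomposition \(\pi\), and using \(\eta\in\cM(\cA,\pi)\), it follows that \(\eta\) is infinitesimally invariant, i.e. \(\int_{\real^d}\ccA f\,\diff\eta=0\) for all \(f\in\test\). Restricting to \(f\in D\subset\test\) gives \eqref{eq_equiv_inv}. Since \(\eta\) is finite, \((X_t)_{t\geq0}\) is Feller, and \(D\) is a core, the cited equivalence (Liggett, Thm.~3.37) yields \eqref{eq_definv}, i.e. \(\eta\) is invariant. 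For the converse (``only if''), suppose \(\eta\) is invariant. The same equivalence gives \eqref{eq_equiv_inv} for all \(f\) in the core \(D\). The one subtlety here is that we need \(\int_{\real^d}\ccA f\,\diff\eta=0\) for \emph{all} \(f\in\test\), not merely for \(f\in D\); I would handle this by a density/core argument, approximating an arbitrary \(f\in\test\) in the graph norm of \((\ccA,\mathscr{D}(\ccA))\) by elements of \(D\) and passing to the limit in \(\int \ccA f_n\,\diff\eta\to\int\ccA f\,\diff\eta\), which is legitimate because \(\eta\) is finite and \(\ccA f_n\to\ccA f\) uniformly. Alternatively, and more cleanly, one observes that \(\test\subset\mathscr{D}(\ccA)\) and that the relation \(\int\ccA f\,\diff\eta=0\) extends from a core to all of \(\mathscr{D}(\ccA)\) by continuity of \(g\mapsto\int g\,\diff\eta\) on \((\cC_\infty(\real^d),\|\cdot\|_\infty)\) combined with \(\|\ccA(f-f_n)\|_\infty\to0\). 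Once infinitesimal invariance of \(\eta\) is established, Theorem~\ref{thm_distEQ} (again using \(\eta\in\cM(\cA,\pi)\)) gives that \(\eta\) solves \eqref{eq_distributional}.

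The main obstacle is the passage from ``the core \(D\)'' to ``all of \(\test\)'' in the converse direction, since Theorem~\ref{thm_distEQ} is phrased in terms of testing against every \(f\in\test\), whereas the Feller-theoretic equivalence only directly delivers the identity on a core. This is genuinely the only non-bookkeeping point, and it is resolved by the core approximation argument sketched above, using finiteness of \(\eta\) so that uniform convergence of \(\ccA f_n\) to \(\ccA f\) transfers to convergence of the integrals. Everything else is a direct concatenation of results already established in the excerpt.
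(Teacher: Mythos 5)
Your proposal is correct and follows essentially the same route as the paper, which obtains the corollary simply by combining the equivalence \eqref{eq_definv} \(\Leftrightarrow\) \eqref{eq_equiv_inv} (valid for finite \(\eta\), a Feller process and a core \(D\)) with Theorem~\ref{thm_distEQ}. One small remark on the converse: it is stated without the hypothesis that a core lies in \(\test\), and your graph-norm approximation from \(D\) to \(\test\) is not actually needed there — since \(\mathscr{D}(\ccA)\) is itself a core, invariance already yields \(\int \ccA f\,\diff\eta=0\) for all \(f\in\mathscr{D}(\ccA)\supset\test\) (alternatively, argue directly from \(\|\tfrac1t(P_tf-f)-\ccA f\|_\infty\to0\) and finiteness of \(\eta\)), after which Theorem~\ref{thm_distEQ} gives \eqref{eq_distributional}; your approximation argument is nevertheless valid whenever the core hypothesis is in force.
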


With the next corollary we demonstrate that stronger assumptions on the characteristic triplet yield a larger space in which a core is to be found. Indeed, simply observe that each step in the proof of Theorem \ref{thm_distEQ} equally works if \(f\in\cC_c^2(\real^d)\) instead of \(f\in\test\), if we assume that all occurring distributions are of order 2.

\begin{corollary} \label{cor_necsufFeller}
	Let \((X_t)_{t\geq0}\) be a Feller process whose Feller generator \((\ccA,\mathscr{D}(\ccA))\) is a Lévy-type operator in \(\real^d\) with characteristic triplet \((a,b,\Pi)\) and convenient decomposition \(\pi=(\nu,\mu,\rho)\) of $\Pi$. Let \(\eta\) be a measure on \(\real^d\) such that \(\nabla \cdot ((a+a_\pi)\eta),~{\frac12 \nabla \cdot b\nabla\eta},\) \(U^\pi(\eta), {\nabla\cdot V^\pi(\eta)}\in\cD'^2\).
	If a core of \((\ccA,\mathscr{D}(\ccA))\) is contained in \(\cC^2_c(\real^d)\) and \(\eta\) solves \eqref{eq_distributional}, 
	then it is invariant for \((X_t)_{t\geq0}\).
\end{corollary}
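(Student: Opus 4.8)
The plan is to reverse the chain of manipulations in the proof of Theorem \ref{thm_distEQ}, but now with the distributional equation \eqref{eq_distributional} tested against functions in $\cC^2_c(\real^d)$ rather than only against $\test$, and then to combine the outcome with Lemma \ref{lem_genrep} and the characterisation of invariant measures of Feller processes recalled above. The point, as indicated before the statement, is that the smoothness of $f$ enters the proof of Theorem \ref{thm_distEQ} only through the integration-by-parts identities \eqref{eq_a}, \eqref{eq_b}, \eqref{eq_V} and \eqref{eq_W}; each of these uses $f$ solely through its partial derivatives up to order two, so they should persist when passing from $\test$ to $\cC^2_c(\real^d)$, provided that every distribution occurring in \eqref{eq_distributional} has order at most $2$.

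First I would check this order condition. By hypothesis $\nabla\cdot((a+a_\pi)\eta)$, $\frac12\nabla\cdot b\nabla\eta$, $U^\pi(\eta)$ and $\nabla\cdot V^\pi(\eta)$ all lie in $\cD'^2$; since $\eta$ is assumed to solve \eqref{eq_distributional}, the fifth term $\hess\cdot W^\pi(\eta)$ equals the negative of the sum of the other four, hence lies in $\cD'^2$ as well, and in particular $\eta\in\cM(\cA,\pi)$. As a distribution of order at most $2$ extends uniquely to a continuous linear functional on $\cD^2=\cC^2_c(\real^d)$, the right-hand side of each of \eqref{eq_a}, \eqref{eq_b}, \eqref{eq_V}, \eqref{eq_W} is well defined for $f\in\cC^2_c(\real^d)$. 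To promote the identities themselves I would fix $f\in\cC^2_c(\real^d)$, choose mollifications $f_n\in\test$ with $f_n\to f$ in the $\cC^2$-topology and with supports inside a fixed compact set, apply \eqref{eq_a}--\eqref{eq_W} to each $f_n$, and let $n\to\infty$: the right-hand sides converge because the distributions involved are continuous on $\cD^2$, while the left-hand sides converge by the same local integrability that underlies the corresponding steps in the proof of Theorem \ref{thm_distEQ} (cf. Corollary \ref{cor_etainM} and Lemma \ref{lem_D1}). Summing the four limiting identities together with the unchanged term $\langle U^\pi(\eta),f\rangle$ shows that, for $\eta\in\cM(\cA,\pi)$, the equation \eqref{eq_distributional} holds if and only if \eqref{eq_genequ} holds for every $f\in\cC^2_c(\real^d)$. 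Since $\eta$ is assumed to solve \eqref{eq_distributional}, \eqref{eq_genequ} holds for all such $f$.

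It then remains to fix a core $D$ of $(\ccA,\mathscr{D}(\ccA))$ with $D\subseteq\cC^2_c(\real^d)$. For $f\in D$ we have $f\in\cC^2_c(\real^d)\cap\mathscr{D}(\ccA)$, so Lemma \ref{lem_genrep}, applied to the Lévy-type operator $(\ccA,\mathscr{D}(\ccA))$ with characteristic triplet $(a,b,\Pi)$ and decomposition $\pi$, shows that $\int_{\real^d}\ccA f\,\diff\eta$ is precisely the quantity that \eqref{eq_genequ} asserts to be zero; hence $\int_{\real^d}\ccA f\,\diff\eta=0$ for every $f\in D$. The equivalence of \eqref{eq_definv} and \eqref{eq_equiv_inv} for Feller processes (which requires $\eta$ finite, exactly as in Corollary \ref{cor_richFellerprocess}) then shows that $\eta$ is invariant for $\X$. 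The step I expect to require the most care is the promotion of \eqref{eq_a}--\eqref{eq_W} from $\test$ to $\cC^2_c(\real^d)$ — concretely, verifying that the left-hand sides are continuous under $\cC^2$-convergence of functions supported in a fixed compact set, which is exactly where the order-at-most-$2$ hypotheses and the integrability built into $\eta\in\cM(\cA,\pi)$ are used. Everything else is a verbatim replay of the proof of Theorem \ref{thm_distEQ} with $\cC^2_c(\real^d)$ in place of $\test$, followed by the already-recalled Feller-process argument.
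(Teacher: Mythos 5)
Your argument is correct and follows essentially the same route as the paper: the paper's proof of this corollary is precisely the remark preceding it, namely that every step in the proof of Theorem \ref{thm_distEQ} goes through verbatim for $f\in\cC^2_c(\real^d)$ once all occurring distributions have order at most $2$, combined with Lemma \ref{lem_genrep} and the core criterion \eqref{eq_equiv_inv}. Your mollification argument for promoting \eqref{eq_a}--\eqref{eq_W} to $\cC^2_c(\real^d)$, your observation that $\hess\cdot W^\pi(\eta)$ inherits order $\leq 2$ from the other four terms, and your explicit note that the Liggett equivalence requires $\eta$ finite merely spell out details the paper leaves implicit.
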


Since we aim to use a relation as in \eqref{eq_equiv_inv} also in the context of processes that do not necessarily have the Feller properties, we state the following lemma.

\begin{lemma}\label{lem_invlaw}
Let \((X_t)_{t\geq0}\) be a Markov process in \(\real^d\) with pointwise generator \((\cA,\domain)\), and let \(\eta\) be an invariant measure for \((X_t)_{t\geq0}\). For all functions \(f\in\domain\) for which there exists \(g\in L^1(\eta)\) and \(T>0\) such that \(\left|\frac{P_tf(x)-f(x)}{t}\right| \leq g(x)\) for all \(x\in\real^d\) and \(t\leq T\), it holds
\begin{align}
\int_{\real^d} \cA f(x)\eta(\diff x)=0.
\end{align}
\end{lemma}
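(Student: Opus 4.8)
The plan is to exploit the invariance of $\eta$ directly at the level of the transition semigroup and then pass to the limit $t\downarrow 0$ using the dominated convergence theorem. First I would recall that invariance \eqref{eq_definv} is equivalent to the statement that $\int_{\real^d} P_t f(x)\,\eta(\diff x) = \int_{\real^d} f(x)\,\eta(\diff x)$ for all $t\geq 0$ and all bounded measurable $f$ — this follows by the usual approximation of such $f$ by simple functions and monotone/dominated convergence, using \eqref{eq_definv} for indicator functions. Since $f\in\domain\subset\cC_\infty(\real^d)$ is bounded and $P_t$ is a contraction on $\cC_\infty(\real^d)$, both sides are finite, and we obtain
\begin{align*}
\int_{\real^d} \frac{P_t f(x) - f(x)}{t}\,\eta(\diff x) = 0 \qquad \text{for all } t>0.
\end{align*}

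Next I would take the limit $t\downarrow 0$ inside the integral. By the definition of the pointwise generator and $f\in\domain$, the integrand converges pointwise to $\cA f(x)$ for every $x\in\real^d$. The hypothesis supplies a dominating function: $\left|\frac{P_t f(x)-f(x)}{t}\right|\leq g(x)$ for all $x$ and all $t\leq T$, with $g\in L^1(\eta)$. Hence the dominated convergence theorem applies along any sequence $t_n\downarrow 0$ with $t_n\leq T$, giving
\begin{align*}
\int_{\real^d} \cA f(x)\,\eta(\diff x) = \lim_{n\to\infty}\int_{\real^d} \frac{P_{t_n} f(x) - f(x)}{t_n}\,\eta(\diff x) = 0,
\end{align*}
which is the claim. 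As a byproduct, the domination also shows $\cA f\in L^1(\eta)$, so the left-hand side is well-defined.

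The only genuinely delicate point is justifying the identity $\int P_t f\,\diff\eta = \int f\,\diff\eta$ for the (possibly unbounded, $\sigma$-finite) measure $\eta$ and $f\in\cC_\infty(\real^d)$; this is the main obstacle, though a mild one. Writing $f = f^+ - f^-$ with $f^\pm\geq 0$ bounded, one approximates $f^\pm$ from below by simple functions $s_k\uparrow f^\pm$, applies \eqref{eq_definv} to each level set (linearity in $B$), and uses monotone convergence on both sides; finiteness of $\int f^\pm\,\diff\eta$ need not hold in general, but here it is not actually required — one can instead note that $P_t$ maps $\cC_\infty$ into $\cC_\infty$ and work with $\frac{P_tf-f}{t}$ directly, which is dominated by $g\in L^1(\eta)$ for $t\leq T$, so all manipulations take place within $L^1(\eta)$ from the outset. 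Everything else is a routine application of dominated convergence, so no further technical difficulties are anticipated.
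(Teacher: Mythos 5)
Your proposal is correct and follows essentially the same route as the paper: invariance of $\eta$ gives $\int_{\real^d}(P_tf(x)-f(x))\,\eta(\diff x)=0$ for all $t>0$, and the hypothesis $\left|\frac{P_tf-f}{t}\right|\leq g\in L^1(\eta)$ justifies dominated convergence as $t\downarrow 0$, yielding $\int \cA f\,\diff\eta=0$. Your extra care about passing from \eqref{eq_definv} to $\int P_tf\,\diff\eta=\int f\,\diff\eta$ for possibly infinite $\eta$ addresses a point the paper simply asserts, but it does not change the argument.
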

\begin{proof}
First of all note that \eqref{eq_definv} is equivalent to \(\int_{\real^d} P_tf(x)\eta(\diff x) = \int_{\real^d} f(x)\eta(\diff x)\) for all \(f\in\cC_\infty(\real^d)\). Thus 
\begin{align*}
0=\lim_{ t\downarrow 0} \frac1t\int_{\real^d}  (P_tf(x)-f(x))\eta(\diff x)=\int_{\real^d}  \lim_{t\downarrow 0}\frac{P_tf(x)-f(x)}{t}\eta(\diff x)=\int_{\real^d} \cA f(x)\eta(\diff x) 
\end{align*}
for all \(f\) as in the statement of the lemma, since the assumption on \(f\) ensures the applicability of Lebesgue's theorem in the second step.
\end{proof}

Combining Theorem \ref{thm_distEQ}, the definition of invariant measures, and Lemma \ref{lem_invlaw} yields the following corollary. 

\begin{corollary}\label{cor_necsuf}
Let \((X_t)_{t\geq0}\) be a Markov process whose pointwise generator \((\cA,\domain)\) is a Lévy-type operator with characteristic triplet \((a,b,\Pi)\) and convenient decomposition \(\pi=(\nu,\mu,\rho)\) of the Lévy kernel \(\Pi\). Assume \(\test\in\domain\) and let \(\eta\in\cM(\cA,\pi)\). If \(\eta\) is invariant for \((X_t)_{t\geq0}\), and all \(f\in\test\) fulfill the additional assumption of Lemma \ref{lem_invlaw}, then \(\eta\) solves \eqref{eq_distributional}.
\end{corollary}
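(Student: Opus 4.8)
The plan is to chain together three facts that have already been established: the characterisation of infinitesimal invariance via the distributional equation (Theorem \ref{thm_distEQ}), the reformulation of invariance in terms of the pointwise generator (Lemma \ref{lem_invlaw}), and the hypothesis $\eta\in\cM(\cA,\pi)$ which is exactly what makes \eqref{eq_distributional} meaningful as an identity of distributions. First I would observe that since $\test\subset\domain$ and, by assumption, every $f\in\test$ satisfies the domination hypothesis of Lemma \ref{lem_invlaw} (there exist $g\in L^1(\eta)$ and $T>0$ with $|(P_tf(x)-f(x))/t|\le g(x)$ for $t\le T$), Lemma \ref{lem_invlaw} applies to each such $f$ and yields $\int_{\real^d}\cA f\,\diff\eta=0$ for all $f\in\test$.

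Next I would invoke the definition of infinitesimal invariance: the display above says precisely that a measure $\eta$ with $\cA f\in L^1(\eta)$ for all $f\in\test$ is infinitesimally invariant for $(\cA,\domain)$ if and only if $\int\cA f\,\diff\eta=0$ for all $f\in\test$. The previous paragraph gives exactly this, so $\eta$ is infinitesimally invariant for $(\cA,\domain)$. One should note here that $(\cA,\domain)$, being the pointwise generator of $\X$, is assumed to be a Lévy-type operator in the sense of \eqref{eq-operator}, so the hypotheses of Theorem \ref{thm_distEQ} are in force, and $\test\subset\domain$ is part of the assumptions.

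Finally I would apply Theorem \ref{thm_distEQ} in the direction ``infinitesimally invariant $\Rightarrow$ solves \eqref{eq_distributional}'': since $\eta\in\cM(\cA,\pi)$, the five terms $\nabla\cdot((a+a_\pi)\eta)$, $\tfrac12\nabla\cdot b\nabla\eta$, $U^\pi(\eta)$, $\nabla\cdot V^\pi(\eta)$, $\hess\cdot W^\pi(\eta)$ all define distributions, so \eqref{eq_distributional} is a well-posed distributional identity, and Theorem \ref{thm_distEQ} tells us it holds. This closes the proof.

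There is essentially no hard step here — the corollary is a routine composition of the three cited results — but if anything deserves a line of care it is checking that the integrability needed to even state ``$\int\cA f\,\diff\eta=0$'' is consistent with the two inputs: the domination hypothesis of Lemma \ref{lem_invlaw} forces $\cA f=\lim_{t\downarrow0}(P_tf-f)/t\in L^1(\eta)$ (dominated by the same $g$), which is precisely the standing requirement in the definition of infinitesimal invariance, so the chain $\text{Lemma \ref{lem_invlaw}}\to\text{infinitesimal invariance}\to\text{Theorem \ref{thm_distEQ}}$ goes through without friction. I would therefore keep the proof to three or four sentences mirroring the paragraphs above.
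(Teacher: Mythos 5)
Your proof is correct and follows exactly the route the paper indicates: Lemma \ref{lem_invlaw} (applied to each $f\in\test$ via the domination hypothesis) gives $\int \cA f\,\diff\eta=0$, i.e.\ infinitesimal invariance, and then Theorem \ref{thm_distEQ} together with $\eta\in\cM(\cA,\pi)$ yields \eqref{eq_distributional}. The paper itself gives no separate proof beyond stating that the corollary follows from combining these ingredients, so your write-up is a faithful (and appropriately brief) elaboration of the intended argument.
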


%%%%%%%%%%%%%%%%%%%%%%%%%%%%%%%%%%%%%%%%%%%%%%%%%%%%%%%%%%%%%%%%%%%%%%%%%%%%%%%%%%%%%%%%%%%%%%%%%%%%%%%%%%%%%%%%%%%%%%%%%%%%%%%%%%%%%%%%%%%%%%%%%%%%%%%%%%%%%%%%%%%%%%%%%%%%%%%%%%%%%%%%%%%%%%%%%%%%%%%%%%%%%%%%%%%%%%%%%%%%%%%%%%%%%%%%%%%%
\section{Lévy-driven SDEs}\label{sec_sde}
%%%%%%%%%%%%%%%%%%%%%%%%%%%%%%%%%%%%%%%%%%%%%%%%%%%%%%%%%%%%%%%%%%%%%%%%%%%%%%%%%%%%%%%%%%%%%%%%%%%%%%%%%%%%%%%%%%%%%%%%%%%%%%%%%%%%%%%%%%%%%%%%%%%%%%%%%%%%%%%%%%%%%%%%%%%%%%%%%%%%%%%%%%%%%%%%%%%%%%%%%%%%%%%%%%%%%%%%%%%%%%%%%%%%%%%%%%%%

Markov processes associated to Lévy-type operators as discussed in the previous section often appear as solutions of Lévy-driven SDEs; cf. \cite{albeverio2,behme,bottcher,kuhn}. In this section we therefore consider SDEs of the form
\begin{align}\label{eq_sde}
\diff X_t = \phi(X_{t-})  \diff L_t
\end{align}
where \(\phi: \real^d \to \real^{d\times n}\) is a function and \((L_t)_{t\geq0}\) is a Lévy process in \(\real^n\) with characteristic triplet \((\gamma,\Sigma,\Upsilon)\). Note that solutions of \eqref{eq_sde}, if existent, are in general not Feller processes. In fact, a solution to \eqref{eq_sde} may even fail to be a Markov process, cf. \cite[Ex. 3.10 and subsequent Remark]{cherny1858singular}.  \\ 
In Section \ref{secSDEsublinear} we discuss a quite general setting in which solutions of \eqref{eq_sde} are Feller processes and Corollary \ref{cor_necsufFeller} is applicable. Thereafter we present an explicit example in Section \ref{sec_superlinear} where the solution of \eqref{eq_sde} is not necessarily a Feller process but Corollary \ref{cor_necsuf} is applicable, and we can still derive an equation for the invariant measure. 
  
\subsection{Lipschitz continuous coefficients with sublinear growth}\label{secSDEsublinear}

In order to apply our results we first need to ensure that \eqref{eq_sde} has a (unique) Markov solution \(\X\) with pointwise generator \((\cA,\domain)\) such that \(\test\subset\domain\). To this end, we assume that
\begin{enumerate}
\item[(\textbf{a})] \(\phi\) is Lipschitz continuous,
\item[(\textbf{b})] there exists \(C>0\) such that \(|\phi(x)|\leq C(1+|x|)\) for all \(x\in\real^d\), and
\item[(\textbf{c})] \(\Upsilon(\{y\in\real^n; \phi(x)y\in B(-x,r)\}) \xrightarrow[] {|x|\to\infty} 0\) for all \(r>0\).
\end{enumerate}
These assumptions supply \(\X\) with additional nice properties as stated in the following theorem which is a summary of \cite[Thms. 2.46 \& 2.47]{schnurr2009symbol} and \cite[Thm. 1.1]{kuhn}.
\begin{theorem}\label{thm_sol}
If (\textbf{a}) - (\textbf{c}) hold, then \eqref{eq_sde} possesses a unique strong solution \(\X\). Moreover, \(\X\) is a Feller process with Feller generator \((\ccA,\mathscr{D}(\ccA))\) with $\cC^2_c(\real^d) \subset\mathscr{D}(\ccA)$, and such that
\begin{align}
\ccA f(x) &= \left(\phi(x) \gamma - \frac12 \nabla \cdot(\phi(x)\Sigma \phi(x)^T) \right) \cdot \nabla f(x)  + \frac12 \nabla\cdot(\phi(x)\Sigma \phi(x)^T) \nabla f(x) \nonumber \\
&\quad+ \int_{\real^d}(f(x+y)-f(x) - \nabla f(x) \cdot y\bone_{\{|y|<1\}}) \Upsilon_\phi(x,\diff y),\label{eq_constrainedgenerator}
\end{align}
for all \(f\in\test\subset\mathscr{D}(\ccA)\), where \(\Upsilon_\phi\) is a Lévy kernel in \(\real^d\) defined by
\begin{align*}
\Upsilon_\phi(x,B):=\begin{cases}\Upsilon(\{y\in\real^n; \phi(x)y \in B\}), &\phi(x)\neq0,\\ 0, &\phi(x)=0,\end{cases} \quad x\in\real^d, B\in\cB(\real^d).
\end{align*}
\end{theorem}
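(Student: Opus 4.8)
The plan is to treat Theorem~\ref{thm_sol} as an assembly of largely standard ingredients, since \eqref{eq_sde} under (\textbf{a})--(\textbf{c}) is a well-studied situation; the work consists mainly in invoking the right results and checking that their hypotheses are met. First I would establish existence, uniqueness and the Markov property: assumptions (\textbf{a}) (Lipschitz continuity of $\phi$) and (\textbf{b}) (linear growth) place us in the classical framework for Lévy-driven SDEs, so a Picard iteration argument (see e.g.\ \cite{schnurr2009symbol} and the references therein) yields, for each deterministic starting point $x\in\real^d$, a unique strong càdlàg solution $\X$ of \eqref{eq_sde}. Since this solution is a measurable functional of $(L_t)_{t\geq0}$ and $L$ has stationary independent increments, the flow property of \eqref{eq_sde} gives that $\X$ is a time-homogeneous Markov process; this step is routine.

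Next I would obtain the Feller property together with $\cC^2_c(\real^d)\subset\mathscr{D}(\ccA)$. Continuous dependence of the solution on its initial value (again from (\textbf{a})) already yields $P_t\cC_b(\real^d)\subset\cC_b(\real^d)$, i.e.\ the $\cC_b$-Feller property; upgrading this to $P_t\cC_\infty(\real^d)\subset\cC_\infty(\real^d)$ requires that mass not escape to infinity in short time. Quantitatively this is controlled by the probabilistic symbol of $\X$, which by \cite[Thms.~2.46 \& 2.47]{schnurr2009symbol} equals $p(x,\xi)=\psi_L(\phi(x)^T\xi)$ with $\psi_L$ the characteristic exponent of $L$, and assumption (\textbf{c}) is precisely the decay condition on the ``jump-out-of-a-ball'' probabilities which, by \cite[Thm.~1.1]{kuhn}, makes the semigroup Feller and places $\cC^2_c(\real^d)$ in the domain $\mathscr{D}(\ccA)$ of the Feller generator. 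In particular $\test\subset\mathscr{D}(\ccA)$, so $\X$ is a rich Feller process and the Courrège-von Waldenfels theorem \cite[Thm.~2.21]{bottcher} applies. Here I would simply cite these results after checking that (\textbf{a})--(\textbf{c}) match their hypotheses.

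Finally I would derive the explicit form \eqref{eq_constrainedgenerator}. For $f\in\cC^2_c(\real^d)$ I would apply Itô's formula for semimartingales to $f(X_t)$, using $\diff X_t=\phi(X_{t-})\,\diff L_t$ and the Lévy-Itô decomposition of $L$ into the drift $\gamma t$, an independent Brownian part with covariance $\Sigma$, and a compensated Poisson part of intensity $\Upsilon$; then take expectations under $\PP^x$, divide by $t$ and let $t\downarrow0$. The martingale parts have vanishing expectation and the limit interchange is justified by $f\in\cC^2_c$ together with the local boundedness of the coefficients coming from (\textbf{b}). This produces the second-order coefficient $b(x)=\phi(x)\Sigma\phi(x)^T$, the jump kernel $\Pi(x,\cdot)=\Upsilon_\phi(x,\cdot)$ (the pushforward of $\Upsilon$ under $y\mapsto\phi(x)y$), and a first-order part equal to $\phi(x)\gamma$ plus the finite correction caused by the truncation $\bone_{\{|y|<1\}}$ in $\real^n$ not being transported to $\bone_{\{|z|<1\}}$ in $\real^d$ by $\phi$; rewriting $\tfrac12\sum_{i,j}b_{ij}\partial_i\partial_jf$ as $\tfrac12\nabla\cdot b\nabla f-\tfrac12(\nabla\cdot b)\cdot\nabla f$ and absorbing the surplus gradient term into the drift gives the stated form.

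The hard part is the Feller step: proving that the solution is Feller in the $\cC_\infty$-sense and that $\cC^2_c(\real^d)$ lies in the domain. This is exactly where (\textbf{c}) is indispensable, and I would not reprove it — it is the content of \cite[Thm.~1.1]{kuhn}, which rests on the symbol computation in \cite{schnurr2009symbol}. The existence/uniqueness step and the Itô-formula computation of the generator are, by comparison, routine.
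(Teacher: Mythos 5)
Your proposal is correct and takes essentially the same approach as the paper: the paper offers no independent proof but presents the theorem explicitly as a summary of \cite[Thms.~2.46 \& 2.47]{schnurr2009symbol} and \cite[Thm.~1.1]{kuhn}, which are exactly the results you invoke for the hard steps (symbol computation, Feller property, $\cC^2_c(\real^d)\subset\mathscr{D}(\ccA)$), while the Picard-iteration and It\^o-formula arguments you sketch are the standard content behind those citations. Your observation that the truncation $\bone_{\{|y|<1\}}$ in $\real^n$ is not transported to $\bone_{\{|y|<1\}}$ in $\real^d$ and produces a finite drift correction is in fact a finer point than the drift displayed in \eqref{eq_constrainedgenerator} records, but this concerns the bookkeeping of the statement rather than a gap in your argument.
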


In view of the discussion in Section \ref{sec_cond}, the main obstacle in the application of Theorem \ref{thm_distEQ} to the solution of the SDE \eqref{eq_sde} is to check, whether a candidate measure \(\eta\) is in \(\cM(\cA,\pi)\). Note that if \(\Upsilon\) is finite and $\phi\in \cC^1$, it follows from Corollaries \ref{cor_etainM} and \ref{cor_sdefinite} that all finite measures \(\eta\), and in particular all probability measures, are contained in \(\cM(\ccA,\pi)\).

In general, a combination of Corollary \ref{cor_necsufFeller} and Theorem \ref{thm_sol} immediately yields the following main result of this section. 

\begin{theorem}\label{cor_sde_sol} 
In the setting of Theorem \ref{thm_sol}, let \(\pi=(\nu,\mu,\rho)\) be a convenient decomposition of \(\Upsilon_\phi\). Let $\eta \in\cM(\ccA,\pi)$ be finite. If a core of \((\ccA,\mathscr{D}(\ccA))\) is contained in \(\test\),  then \(\eta\) is invariant for \(\X\) if and only if 
\begin{align}
0=&- \nabla \cdot \left(\left(\phi(x) \gamma - \frac12 \nabla\left(\phi(x)\Sigma \phi(x)^T\right) +a_\pi(x)\right)\eta\right)+ \frac12 \nabla \cdot (\phi(x)\Sigma \phi(x)^T)\nabla\eta \nonumber \\
&+ U^\pi(\eta) +\nabla \cdot V^\pi(\eta) + \hess \cdot W^\pi(\eta) \label{eq_sde_disteq} 
\end{align}
on \(\real^d\), where \(a_\pi(x)= -\int_{|y|<1}y\nu(x,\diff y)- \int_{|y|<1}y\mu(x,\diff y)+\int_{|y|\geq1}y\rho(x,\diff y)\), and $U^\pi(\eta)$, $V^\pi(\eta)$, and \(W^\pi(\eta)\) denote the jump functionals of \(\Upsilon_\phi\) w.r.t.   \(\pi\) and    \(\eta\).
\end{theorem}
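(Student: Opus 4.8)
The plan is to combine the structural result of Theorem~\ref{thm_sol} with the distributional characterisation of invariant measures obtained in Section~\ref{sec_processes}; no new analysis is required, only a careful matching of notation.

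First I would invoke Theorem~\ref{thm_sol}: under the assumptions (\textbf{a})--(\textbf{c}), the SDE \eqref{eq_sde} possesses a unique strong solution \(\X\), which is a Feller process whose Feller generator \((\ccA,\mathscr{D}(\ccA))\) satisfies \(\test\subset\cC^2_c(\real^d)\subset\mathscr{D}(\ccA)\). Hence \(\X\) is a \emph{rich} Feller process, and the restriction of \(\ccA\) to \(\test\) is precisely the Lévy-type operator displayed in \eqref{eq_constrainedgenerator}. Reading off that display, this operator has characteristic triplet \((a,b,\Pi)\) given by
\[
a(x)=\phi(x)\gamma-\tfrac12\nabla\cdot(\phi(x)\Sigma\phi(x)^T),\qquad b(x)=\phi(x)\Sigma\phi(x)^T,\qquad \Pi(x,\cdot)=\Upsilon_\phi(x,\cdot).
\]
With this identification the correction drift \(a_\pi\) from Lemma~\ref{lem_genrep} is exactly the \(a_\pi\) appearing in the statement, and the jump functionals \(U^\pi(\eta),V^\pi(\eta),W^\pi(\eta)\) of \(\Upsilon_\phi\) with respect to \(\pi\) and \(\eta\) are precisely those entering the distributional equation \eqref{eq_distributional}. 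In particular \(\cM(\ccA,\pi)\) coincides with \(\cM(\cA,\pi)\) for this Lévy-type operator, so the hypothesis \(\eta\in\cM(\ccA,\pi)\) is exactly the hypothesis required in Corollary~\ref{cor_richFellerprocess}, and the assumption that a core of \((\ccA,\mathscr{D}(\ccA))\) lies in \(\test\) carries over verbatim.

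Next I would apply Corollary~\ref{cor_richFellerprocess} to the rich Feller process \(\X\): since \(\eta\) is finite, lies in \(\cM(\cA,\pi)\), and a core of \((\ccA,\mathscr{D}(\ccA))\) is contained in \(\test\), the measure \(\eta\) is invariant for \(\X\) if and only if it solves \eqref{eq_distributional}, that is,
\[
-\nabla\cdot((a+a_\pi)\eta)+\tfrac12\nabla\cdot b\nabla\eta+U^\pi(\eta)+\nabla\cdot V^\pi(\eta)+\hess\cdot W^\pi(\eta)=0
\]
on \(\real^d\). Substituting the explicit expressions for \(a\) and \(b\) from above turns this identity into \eqref{eq_sde_disteq}, which completes the proof. (For the ``if'' direction alone one could alternatively invoke Corollary~\ref{cor_necsufFeller} under the \(\cD'^2\)-regularity assumed there; since \(\X\) is in fact rich Feller this refinement is not needed.)

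Since every substantive ingredient --- existence and uniqueness of the solution, the Feller property, the explicit form of the generator on \(\cC^2_c(\real^d)\), and the equivalence of invariance with the distributional equation --- has already been established, the only real work is bookkeeping. The one point deserving a line of care is that the drift \(a\) contains \(\nabla\cdot(\phi(x)\Sigma\phi(x)^T)\), which under mere Lipschitz continuity of \(\phi\) is defined only almost everywhere; but this is already implicit in the representation \eqref{eq_constrainedgenerator} furnished by Theorem~\ref{thm_sol}, so no separate argument is needed, and I do not anticipate a genuine obstacle.
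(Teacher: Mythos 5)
Your proposal is correct and follows essentially the same route as the paper, which obtains the theorem precisely as an immediate combination of Theorem~\ref{thm_sol} with the invariance criteria of Section~\ref{sec_processes}, the only work being the identification of the triplet \(\bigl(\phi(x)\gamma-\tfrac12\nabla\cdot(\phi(x)\Sigma\phi(x)^T),\,\phi(x)\Sigma\phi(x)^T,\,\Upsilon_\phi\bigr)\). The paper cites Corollary~\ref{cor_necsufFeller} for this, whereas you invoke Corollary~\ref{cor_richFellerprocess}; since the latter matches the stated hypotheses (core in \(\test\), \(\eta\in\cM(\ccA,\pi)\) finite) and delivers both directions of the equivalence at once, your choice is if anything the cleaner citation and requires no further argument.
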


If \(\X\) is a one-dimensional process the distributional equation  \eqref{eq_sde_disteq} can be transformed into a Volterra-Fredholm integral equation as we have seen in Section \ref{sec_onedim}. In the following we present this in a special case for which the respective integral equation turns out to be particularly nice.\\

 Consider the SDE
\begin{align}\label{eq_simplesde}
\diff X_t = \varphi(X_{t-}) \diff B_t + \diff S_t,
\end{align}
where \(\varphi: \real\to \real^{1\times n}\) is a function fulfilling (\textbf{a}) and (\textbf{b}), \((B_t)_{t\geq0}\) is an \(n\)-dimensional Brownian motion with drift having the characteristic triplet \((\gamma,\Sigma,0)\), and \((S_t)_{t\geq0}\) is a one-dimensional pure-jump Lévy process with characteristic exponent
\begin{align*}
\psi(\xi)= \int_{\real^d}(1-\ee^{iy\xi})\mu(\diff y) + \int_{\real^d}(1-\ee^{iy\xi} + i y\xi)\rho(\diff y),
\end{align*}
where \(\mu\) and \(\rho\) are Lévy measures such that \(\int_{y\neq0}(1\wedge|y|)\mu(\diff y)+\int_{y\neq0}(|y|\wedge|y|^2)\rho(\diff y)<\infty\). In particular, this implies that we can set $a_\pi(x)\equiv 0$. \\Note that (\textbf{c}) is automatically fulfilled for \eqref{eq_simplesde} as the jumps of \(\X\) are space-homogeneous.\\
In this case, the Volterra-Fredholm integral equation \eqref{eq_vfie_template} for an infinitesimally invariant measure of the associated Lévy-type operator $(\ccA,\mathscr{D}(\ccA))$ takes the form
\begin{align}
\frac{1}{2}\varphi(x) \Sigma \varphi(x)^T  \eta(\diff x) &= \Big( c_1 x + c_2 - \int_\real \int_{(0,x]} \sgn (y-z) \tilde{\mu}(z,y-z) \diff y \, \eta(\diff z) \nonumber \\
	&\quad  - \int_\real \tilde{\bar{\rho}} (z,x-z) \eta(\diff z) + \int_{(0,x]} \varphi(z) \cdot \gamma \, \eta(\diff z)  \Big) \diff x. \label{eq_VFSDE}
\end{align}
Specifying some of the constants then yields rather simple equations as illustrated in the following corollary.

\begin{corollary}\label{cor_2convo} 
Let \(\X\) be the unique solution of \eqref{eq_simplesde} and let \(\eta\) be a finite measure.
\begin{enumerate}
\item Assume \(\gamma=0\) and \(\mu=0\). If \(\eta\) is invariant for \(\X\), then there exist constants \(c_1,c_2\in\real\) such that
\begin{align*}
\frac{1}{2} \varphi(x) \Sigma \varphi(x)^T \eta(\diff x)= \big(c_1x+c_2 -(\tilde{\overline\rho}*\eta)(x)\big)\diff x
\end{align*}
on \(\real\). Additionally, if $\varphi(x) \Sigma \varphi(x)^T \neq 0$ for all $x\in\real$, then $\eta$ is absolutely continuous.
\item Assume \(\sigma^2=0\) and \(\rho=0\).  If \(\eta\) is invariant for \(\X\), then there exists a constant \(c_3\in\real\) such that
\begin{align*}
\varphi(x) \cdot \gamma \,\eta(\diff x)= \big(c_3+((\sgn(\cdot)\tilde\mu)*\eta)(x)\big)\diff x
\end{align*}
on \(\real\). Additionally, if $\varphi(x) \cdot \gamma\neq 0$ for all $x\in\real$, then $\eta$ is absolutely continuous.
\end{enumerate}
\end{corollary}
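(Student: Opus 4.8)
The plan is to specialise the Volterra--Fredholm integral equation \eqref{eq_VFSDE} for solutions of \eqref{eq_simplesde} to the two degenerate regimes, after checking that its hypotheses apply to an invariant $\eta$.

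Since $\varphi$ satisfies (\textbf{a}) and (\textbf{b}), and (\textbf{c}) is automatic for \eqref{eq_simplesde} (the jumps being space-homogeneous), Theorem \ref{thm_sol} provides the unique solution $\X$ as a Feller process whose generator $(\ccA,\mathscr{D}(\ccA))$ is the one-dimensional Lévy-type operator with $b(x)=\varphi(x)\Sigma\varphi(x)^T$, space-homogeneous Lévy kernel $\mu+\rho$, and --- in the convenient decomposition $\pi=(0,\mu,\rho)$ fixed before the corollary --- with $a_\pi\equiv 0$ and effective drift $a(x)=\varphi(x)\cdot\gamma-\frac12 b'(x)$. Since $\eta$ is finite and invariant and $\X$ is Feller, \eqref{eq_equiv_inv} holds on a core of $(\ccA,\mathscr{D}(\ccA))$ and, by finiteness of $\eta$, extends to all of $\mathscr{D}(\ccA)\supseteq\testo$; hence $\eta$ is infinitesimally invariant. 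Finally, the hypotheses of Theorem \ref{thm_vfie} are met: $a+a_\pi=\varphi\gamma-\frac12 b'\in L^1_\loc(\eta)$ and $b=\varphi\Sigma\varphi^T\in W^{1,1}_\loc(\eta)$ because $\varphi$ is locally Lipschitz and $\eta$ is finite, while $U^\pi(\eta),V^\pi(\eta),W^\pi(\eta)\in\cD'^0$ by Corollary \ref{cor_constantPi}, the Lévy kernel being space-homogeneous and $\eta$ finite. Consequently $\eta$ solves \eqref{eq_VFSDE}.

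For (i) I would substitute $\gamma=0$ and $\mu=0$ into \eqref{eq_VFSDE}: the term $\int_{(0,x]}\varphi(z)\cdot\gamma\,\eta(\diff z)$ vanishes, $\widetilde\mu\equiv 0$ annihilates $\int_\real\int_{(0,x]}\sgn(y-z)\widetilde\mu(z,y-z)\,\diff y\,\eta(\diff z)$, and writing $\widetilde{\overline\rho}(z,x-z)=\widetilde{\overline\rho}(x-z)$ by space-homogeneity turns the remaining term into $(\widetilde{\overline\rho}*\eta)(x)$; this is the asserted identity with $c_1,c_2$ the constants appearing in \eqref{eq_VFSDE}. For (ii) it is cleanest to integrate the distributional equation \eqref{eq_dist1dim} only once: with $b\equiv 0$, $\nu=0$ and $\rho=0$ it reduces to $-(\varphi\gamma\,\eta)'+V^\pi(\eta)'=0$, so $\varphi\gamma\,\eta-V^\pi(\eta)$ equals a constant $c\in\real$; by the computation of $V^\pi(\eta)$ in the proof of Corollary \ref{cor_abscont}, combined with space-homogeneity of $\mu$, the functional $V^\pi(\eta)$ is represented by the locally integrable function $x\mapsto\big((\sgn(\cdot)\widetilde\mu)*\eta\big)(x)$, which yields the claimed equation with $c_3:=-c$. (Alternatively one may substitute $\Sigma=0$, $\rho=0$ directly in \eqref{eq_VFSDE}, note that the bracketed expression vanishes Lebesgue-a.e., upgrade this to all $x$ using right-continuity of $x\mapsto\int_{(0,x]}\varphi(z)\cdot\gamma\,\eta(\diff z)$, and differentiate once.)

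The two absolute-continuity addenda then follow from Corollary \ref{cor_abscont}. In case (i), positive semidefiniteness of $\Sigma$ makes $\varphi\Sigma\varphi^T\neq 0$ equivalent to $\varphi\Sigma\varphi^T>0$, so $\Omega_1=\real$; choosing $\Omega_2=\emptyset$, the corollary forces $\eta_d=0$. In case (ii), $b\equiv 0$ gives $\Omega_1=\emptyset$, while $\Omega_2=\real$ is admissible because $a+a_\pi=\varphi\gamma\neq 0$ there and $W^\pi(\eta)=0$ (as $\rho=0$ forces $\overline{\overline\rho}=0$); hence again $\eta_d=0$ and $\eta$ is absolutely continuous. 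The only genuine bookkeeping point is verifying $\eta\in\cM(\ccA,\pi)$, i.e.\ that the jump functionals are bona fide distributions; this is exactly what Corollary \ref{cor_constantPi} supplies in the present space-homogeneous, finite-measure situation, and the rest is substitution and a single integration.
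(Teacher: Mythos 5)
Your proposal is correct and follows essentially the same route as the paper: establish that $\eta$ lies in the scope of Theorem \ref{thm_vfie} via Corollaries \ref{cor_etainM} and \ref{cor_constantPi}, use Theorem \ref{thm_sol} together with the converse direction of Corollary \ref{cor_richFellerprocess} to pass from invariance to infinitesimal invariance, specialise \eqref{eq_VFSDE} for (i), integrate the distributional equation only once for (ii), and invoke Corollary \ref{cor_abscont} for absolute continuity. The only difference is that you spell out details (identification of the triplet, the density of $V^\pi(\eta)$, the sign of $c_3$) that the paper leaves implicit.
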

\begin{proof}
Note that in both cases \(\eta\in\cM(\ccA,\pi)\) can be shown with Corollaries \ref{cor_etainM} and \ref{cor_constantPi}, while the given equations are special cases of \eqref{eq_VFSDE} except that for (ii) \eqref{eq_sde_disteq} needs to be integrated only once to obtain the desired result. That an invariant measure solves the respective equation is a consequence of Theorem \ref{thm_vfie}, Corollary \ref{cor_richFellerprocess}, and Theorem \ref{thm_sol}. The absolute continuity in both cases follows by Corollary \ref{cor_abscont}.
\end{proof}
 
\begin{example}[Stable process + space dependent drift]
In \eqref{eq_simplesde}, let \(\Sigma=0\) and let \((S_t)_{t\geq0}\) be a spectrally positive stable process with parameter \(\alpha\in(0,1)\), i.e. \(\rho=0\) and \(\mu(\diff x)=\bone_{\{x>0\}}x^{-(1+\alpha)}\diff x\), cf. \cite[Thm. 14.3]{sato2nd}. Hence, we are in case (ii) of Corollary \ref{cor_2convo} with \(\tilde\mu(x)=\bone_{\{x>0\}}\alpha^{-1} x^{-\alpha}\). Suppose that \(\varphi(0)\cdot \gamma =0\), and \(\varphi(x) \cdot \gamma <0\) for all \(x>0\) and let \(X_0>0\). Then \(X_t>0\) for all \(t\geq0\) and by  Corollary \ref{cor_abscont} any invariant probability measure $\eta$ of $(X_t)_{t\geq 0}$ is absolutely continuous with density  \(\eta(x)\) on \((0,\infty)\).\\
By Corollary \ref{cor_2convo}(ii) this density solves for some constant \(c\in\real\)
\begin{align}\label{eq_frac}
c+ (\varphi(x)\cdot \gamma) \eta(x) = \tilde\mu*\eta(x) = \frac{1}{\alpha} \int_{(0,x)} \frac{\eta(y)}{(x-y)^\alpha} \diff y= \frac{\Gamma(1-\alpha)}{\alpha} (I^{1-\alpha}_{0+} \eta)(x), \quad x>0,
\end{align}
with the  Riemann-Liouville fractional integral
\begin{align*}
(I_{0+}^{\alpha}\eta)(x):=\frac1{\Gamma(\alpha)} \int_{(0,x)} \frac{\eta(y)}{(x-y)^{1-\alpha}}\diff y.
\end{align*}
Thus, solving \eqref{eq_frac} amounts to solving a fractional integral equation.
\end{example}

\subsection{Superlinear growth}\label{sec_superlinear}

In this section we consider an SDE that does not fulfill assumption (\textbf{b}) of the previous subsection and hence its solution is not necessarily a Feller process. Nonetheless, we are still able to derive a necessary criterion for an invariant probability measure of a solution of the SDE below. Thus the growth condition (\textbf{b}) is, in general, not necessary to obtain an equation for the invariant measure of a Lévy-driven SDE.\\

Consider the SDE
\begin{align}\label{sde_fastgrowth}
\diff X_t^x = \varphi_1(X^x_{t-}) \diff B_t  + \varphi_2(X^x_{t-})\diff N_t, \quad X_0^x=x\in\real,
\end{align}
with  \(0\leq\varphi_2\in\testo\), \(\varphi_1\in\cC^\infty(\real)\) such that \(\varphi_1(x)>C(1+|x|^{1+\alpha})\) for some \(C,\alpha>0\), \((B_t)_{t\geq0}\)  is a standard Brownian motion in \(\real\), and \((N_t)_{t\geq0}\) a Poisson process with intensity \(\lambda=1\), independent of \((B_t)_{t\geq0}\).

\begin{theorem}\label{thm_super}
Equation \eqref{sde_fastgrowth} has a unique strong solution \(\X\), and, if a probability measure \(\eta\) is invariant for \(\X\), it is absolutely continuous, i.e. \(\eta(\diff x)=\eta(x)\diff x\), where the density $\eta(\cdot)$ solves the equation
\begin{align}\label{eq_sol_of_suplinear}
\frac12 \left(\varphi_1^2(z)\eta(z)\right)' = -\int_\real \bone_{\{x<z<x+\varphi_2(x)\}}\eta(x)\diff x.
\end{align}
Moreover, there exist \(M,C_1,C_2 > 0\)  such that
\begin{align*}
\eta(x)=\begin{cases}
\frac{C_1}{\varphi_1^2(x)}, & x>M,\\
\frac{C_2}{\varphi_1^2(x)}, & x<-M.
\end{cases}
\end{align*} 
\end{theorem}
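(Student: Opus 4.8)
First I would establish existence and uniqueness of the strong solution. Since $\varphi_2 \in \testo$ is bounded and $\varphi_1 \in \cC^\infty(\real)$ is locally Lipschitz, the coefficients of \eqref{sde_fastgrowth} are locally Lipschitz, so a unique strong solution exists up to a possible explosion time. Non-explosion follows because $(N_t)_{t\geq 0}$ has finitely many jumps on any compact time interval, and between jumps $X^x$ solves $\diff X_t = \varphi_1(X_t)\diff B_t$, a one-dimensional SDE with a smooth diffusion coefficient and no drift; such an equation is a time-changed Brownian motion and does not explode in finite time (one can invoke Feller's test, or simply the fact that $\varphi_1$ is locally bounded so the natural scale function is the identity and the process is a local martingale with locally bounded quadratic variation). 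Hence $\X$ is well-defined for all $t \geq 0$, and it is a Markov process because the jumps of $N$ are exponential waiting times. Its pointwise generator agrees on $\testo$ with the Lévy-type operator
\begin{align*}
\ccA f(x) = \tfrac12 \varphi_1^2(x) f''(x) + \big(f(x+\varphi_2(x)) - f(x)\big),
\end{align*}
i.e. characteristic triplet $(a,b,\Pi)$ with $a \equiv 0$ (after the usual correction term, which vanishes here since jumps are of finite activity and we can absorb the drift; more carefully, the jump term $f(x+\varphi_2(x))-f(x)$ has finite total mass $1$, so $b(x) = \varphi_1^2(x)$ and $\Pi(x,\diff y) = \delta_{\varphi_2(x)}(\diff y)$, and correspondingly $a_\pi(x) = \varphi_2(x)\bone_{\{\varphi_2(x)\geq 1\}}$ — but since $\varphi_2$ is compactly supported one may arrange $a + a_\pi$ to be a bounded smooth function, or simply carry it along). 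The key point is that $\testo \subset \domain$, the jump kernel is finite, and $\varphi_2$ is smooth, so by Corollaries \ref{cor_etainM} and \ref{cor_sdefinite} every probability measure lies in $\cM(\ccA,\pi)$.

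Next I would verify the hypothesis of Lemma \ref{lem_invlaw} so that Corollary \ref{cor_necsuf} applies. For $f \in \testo$ one has by Dynkin's formula $P_t f(x) - f(x) = \EE^x \int_0^t \ccA f(X_s)\diff s$, and $\ccA f$ is bounded (since $f'', f$ are bounded and $\varphi_1^2 f''$ is bounded because $f$ has compact support — crucially $\varphi_1^2 f''$ is supported on $\supp f$ where $\varphi_1$ is bounded), so $|(P_t f(x) - f(x))/t| \leq \|\ccA f\|_\infty =: g(x)$, a constant, hence in $L^1(\eta)$ for any probability measure $\eta$. Therefore any invariant probability $\eta$ satisfies the distributional equation \eqref{eq_distributional}, which in this one-dimensional finite-activity setting reads
\begin{align*}
\tfrac12 (\varphi_1^2 \eta)'' - \big(a_\pi \eta\big)' + U^\pi(\eta) = 0,
\end{align*}
with $\langle U^\pi(\eta), f\rangle = \int_\real (f(x+\varphi_2(x)) - f(x))\eta(\diff x)$. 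Integrating once (as in the proof of Theorem \ref{thm_vfie}) gives $\tfrac12(\varphi_1^2\eta)' = $ (a distributional primitive of $U^\pi(\eta)$) $+$ const; computing that primitive via Lemma \ref{lem_primitiveradon} — $\langle U^\pi(\eta), \bone_{(0,x]}\rangle = \int_\real(\bone_{\{0 < x+\varphi_2(x) \leq z\}} - \bone_{\{0 < x \leq z\}})\eta(\diff x)$, which rearranges to $-\int_\real \bone_{\{x < z \leq x+\varphi_2(x)\}}\eta(\diff x)$ plus a term that can be absorbed into the constant — yields \eqref{eq_sol_of_suplinear}. Because $\varphi_1$ never vanishes (indeed $\varphi_1(x) \geq C(1+|x|^{1+\alpha}) > 0$) and the right-hand side of \eqref{eq_sol_of_suplinear} defines a bounded (in fact compactly-supported-perturbation) measurable function, $(\varphi_1^2\eta)$ is a $W^{1,1}_{\loc}$ function, hence $\eta$ is absolutely continuous with a density $\eta(\cdot)$ satisfying the stated ODE; Corollary \ref{cor_abscont} can also be invoked directly with $\Omega_1 = \real$ since $b = \varphi_1^2 > 0$ everywhere.

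Finally, the tail behavior: for $|x| > M$ with $M := \sup\{|x| : x \in \supp\varphi_2\}$, the right-hand side of \eqref{eq_sol_of_suplinear} involves $\bone_{\{x < z < x+\varphi_2(x)\}}$, which forces $x \in \supp\varphi_2$ unless $\varphi_2(x)=0$; more precisely, for $z > M$ we need $x$ with $x < z < x + \varphi_2(x)$, so $x > z - \varphi_2(x) \geq z - \|\varphi_2\|_\infty$ and $x < z$, but also $\varphi_2(x) > z - x > 0$ requires $x \in \supp\varphi_2 \subset [-M,M]$, incompatible with $z - \|\varphi_2\|_\infty > M - \|\varphi_2\|_\infty$ once $M$ is chosen large enough (enlarge $M$ so that $M > \|\varphi_2\|_\infty + \sup\supp\varphi_2$). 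Hence for $z > M$ the right side vanishes, so $(\varphi_1^2\eta)'(z) = 0$, giving $\varphi_1^2(z)\eta(z) = C_1$ constant on $(M,\infty)$, i.e. $\eta(z) = C_1/\varphi_1^2(z)$; symmetrically $\eta(z) = C_2/\varphi_1^2(z)$ for $z < -M$. Integrability of these tails against $\diff z$ is guaranteed by $\varphi_1^2(z) \geq C^2(1+|z|^{1+\alpha})^2 \geq C^2 |z|^{2+2\alpha}$, consistent with $\eta$ being a probability measure. The main obstacle I anticipate is the careful bookkeeping in the second paragraph: identifying the correct convenient decomposition and the $a_\pi$ term, confirming that $\test$-functions satisfy the domination hypothesis of Lemma \ref{lem_invlaw} (which hinges on $\varphi_1^2 f''$ being bounded despite $\varphi_1$ growing, true only because $f$ has compact support), and correctly extracting \eqref{eq_sol_of_suplinear} from the twice-integrated distributional equation while tracking the arbitrary constants — one of which must be shown to vanish or be irrelevant for the final tail formula, presumably by the probability normalization together with the tail integrability.
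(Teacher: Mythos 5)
Your step (1) takes a genuinely different route from the paper, and it works: the paper localizes in space (cut the coefficients off outside $[-M,M]$, use Lipschitz theory there, use the Engelbert--Schmidt theorem for the driftless SDE outside the support of $\varphi_2$, and rule out accumulation of excursions via an i.i.d.\ excursion-duration estimate), whereas you interlace in time at the jump epochs of $N$: between jumps the process solves $\diff X=\varphi_1(X)\diff B$, which is a regular diffusion on natural scale and hence non-explosive, and the jump times do not accumulate. This is shorter; just note that ``local martingale with locally bounded quadratic variation'' is not by itself a non-explosion argument (you need the natural-scale/Feller-test or time-change statement you also mention), and that pathwise uniqueness of the continuous pieces (local Lipschitz) plus Yamada--Watanabe is what turns the weak non-explosive solution into the strong solution you glue. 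Your steps for the generator coincide in spirit with the paper's (It\^o/Dynkin plus the observation that $\varphi_1^2 f''$ is bounded because $f$ has compact support, which is exactly what makes Lemma \ref{lem_invlaw} applicable), and the absolute continuity via Corollary \ref{cor_abscont} with $\Omega_1=\real$ is the paper's argument.

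There are, however, genuine gaps in the middle and at the end. First, the triplet bookkeeping is not just untidy but wrong as written: putting $\tfrac12\varphi_1^2f''$ into the canonical form $\tfrac12(bf')'$ with $b=\varphi_1^2$ forces a drift $-\varphi_1\varphi_1'$ (plus the small-jump compensator), and in Theorem \ref{thm_distEQ} these drift contributions cancel exactly, leaving no $(a_\pi\eta)'$ term; your displayed equation retains a spurious $-(a_\pi\eta)'$, and your sign trail through Lemma \ref{lem_primitiveradon} (the ``absorbable'' term is the constant $\int\bone_{\{x\le 0<x+\varphi_2(x)\}}\eta(\diff x)$, but the minus sign in front of the primitive of $U^\pi(\eta)$ must be carried coherently) does not consistently reproduce \eqref{eq_sol_of_suplinear}; the paper avoids this by working with the medium-jump kernel $\overline\mu(x,\diff y)=\bone_{(0,\varphi_2(x))}(y)\diff y$, which yields \eqref{eq_super_eta} directly. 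Second, your mechanism for eliminating the linear constant fails: after two integrations one has $\tfrac12\varphi_1^2(x)\eta(x)=c_1x+c_2-\int_{(0,x]}V(\eta,y)\diff y$ with the jump term constant outside $[-M,M]$, and precisely because $\varphi_1$ grows superlinearly the function $c_1x/\varphi_1^2(x)$ is integrable, so ``probability normalization together with tail integrability'' does not force $c_1=0$; the paper's argument is that $c_1\neq 0$ would make the right-hand side negative on one of the two tails, contradicting $\varphi_1^2\eta\ge 0$, and you need that positivity argument. Third, the statement asserts $C_1,C_2>0$, which your proposal never addresses; the paper gets strict positivity from the fact that, between jumps, the process is a regular diffusion on natural scale and therefore reaches every interval with positive probability, so the density cannot vanish on either tail.
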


\begin{remark}
The behaviour of \(\eta\) on \([-M,M]^c\) has a particular consequence concerning a closely related necessary criterion for invariance: Denote by \(q: \real\times\real\to\complex\) the \textbf{symbol} of \(\cA\), i.e.
\begin{align*}
\cA f(x)= -\int_\real q(x,\xi) \hat f(\xi) \ee^{ix\xi}\diff \xi, \quad f\in\testo,
\end{align*}
where \(\hat f\) denotes the Fourier transform of \(f\). In \cite[Thm. 3.1]{behme} it is shown that, for a  probability measure \(\eta\)  to be invariant for \(\X\), it is necessary that
\begin{align*}
\int_\real \ee^{ix\xi}p(x,\xi)\eta(\diff x) = 0 \quad \text{for all } \xi\in\real.
\end{align*}
 However, a condition for this to be true is that \(\int_\real |q(x,\xi)|\eta(\diff x)<\infty\) for all \(\xi\in\real\). From \eqref{eq_g} it is quickly derived that, in our case, \(q(x,\xi)= \frac12\varphi_1(x)^2\xi^2 + (1-\ee^{i\varphi_2(x)\xi})\), and thus, \(\int_\real |q(x,\xi)|\eta(\diff x)\) does not converge. Hence, the results from \cite{behme} are not applicable here.
\end{remark}

\begin{proof}[Proof of Theorem \ref{thm_super}] 
We divide the proof into several steps.
\begin{itemize}
\item[(1)] \textit{Equation \eqref{sde_fastgrowth} has a unique strong solution \(\X\) for all \(t\geq0\).}
\end{itemize}
Let $x\in \real$ be fixed and let \((X_t^x)_{t\geq0}\) denote a solution of the SDE \eqref{sde_fastgrowth} with $X_0=x$ as long as it exists. Indeed, by \cite[Thm. V.38]{protter} there exists a unique (strong) solution \((X_t^x)_{t\geq0}\) of \eqref{sde_fastgrowth} up to an \(x\)-dependent stopping time \(\tau_x\), called the \textbf{explosion time} of \((X_t^x)_{t\geq0}\), for which \(\limsup_{t\to\tau_x} |X_t^x| = \infty\) on \(\{\tau_x<\infty\}\). To prove the claim we thus intend to show \(\tau_x=\infty\) \(\PP_x\)-a.s. \\
Since \(\varphi_2\in\testo\) there exists \(M'>0\) such that \(\supp(\varphi_2)\subset [-M',M']\). Let \(M:=1+M'+~\|\varphi_2\|_\infty\) and consider a function \(h\in\testo\) with \(h(x)=1\) on \([-M,M]\). Let the auxiliary process \((Y_t^x)_{t\geq0}\) solve
\begin{align*}
	\diff Y^x_t =  h(Y^x_{t-}) \varphi_1(Y^x_{t-}) \diff B_t +  h(Y^x_{t-})\varphi_2(Y^x_{t-})\diff N_t,  \quad  Y_0^x=x  \in[-M,M].
\end{align*}
By \cite[Thm. V.32]{protter}, \((Y_t^x)_{t\geq0}\) exists, and is a (pathwise) unique and non-explosive strong Markov process, because the coefficients $h\varphi_1$ and $h\varphi_2$ are globally Lipschitz continuous. \\
Let \(x\in[-M,M]\). Then \(X^x_t=Y^x_t\) a.s. for all \(t\in[0,\vartheta)\) where \(\vartheta:=\inf\{s\geq 0: |X^x_s|\geq M\}\) by the uniqueness of solutions. Moreover \(\vartheta<\tau_x\) since otherwise \(\limsup_{t\to\tau_x} |X_t^x|\leq M\), and in particular if \(\vartheta=\infty\) then \(\tau_x=\infty\). Further, as \(Y^x_t\) and \(X^x_t\) cannot enter \([-M,M]^c\) via jumps, they creep across the barrier $\{-M,M\}$ and we have \(X^x_{\vartheta}=Y^x_{\vartheta}\in\{-M,M\}\) a.s. on \(\{\vartheta<\infty\}\).\\
Conditioning on \(\{\vartheta<\infty\}\) we obtain
\begin{align*}
X^x_{\vartheta + t} &= x + \int_0^{\vartheta+t} \varphi_1(X^x_{s-}) \diff B_s + \int_0^{\vartheta+t} \varphi_2(X^x_{s-})\diff N_t\\
&= X^x_{\vartheta} + \int_{\vartheta}^{\vartheta+t}\varphi_1(X^x_{s-}) \diff B_s + \int_{\vartheta}^{\vartheta+t} \varphi_2(X^x_{s-})\diff N_t \\
&= X^x_{\vartheta} + \int_0^t\varphi_1(X^x_{(\vartheta+s)-}) \diff (B_{\vartheta+s}-B_{\vartheta})
\end{align*}
for all \(t+\vartheta <\theta:=\inf \{s\geq \vartheta: |X^x_{s}|\leq M' \}\). Moreover, conditioned on \(\{\vartheta<\infty\}\), we have that \((B_{\vartheta+s}-B_{\vartheta})_{s\geq0}\) is a Brownian motion independent of \(\sigma(X_s; s\leq \vartheta)\). We thus consider the equation
\begin{align}\label{sde_z}
Z^x_t =  X^x_{\vartheta} + \int_{(0,t]} \varphi_1(Z^x_{s-}) \diff B_s.
\end{align}
By the Engelbert-Schmidt theorem, see \cite[Thms. 5.5.4 and 5.5.7]{karatzas}, there exists a unique weak solution \((Z^x_t)_{t\geq0}\) of \eqref{sde_z} which is non-explosive. Since $X_t^x$ is a unique strong solution up to the explosion time, there exists an equivalent version of \((Z^x_t)_{t\geq0}\) such that \(X^x_{\vartheta+t}= Z_t^x\) a.s. for all \(t\in[0,\theta-\vartheta)\). Again by continuity we obtain \(X^x_{\vartheta+\theta} = Z^x_{\theta}\in\{-M',M'\}\) a.s. on \(\{\theta<\infty\}\). \\
Now, if \(\theta=\infty\) we have \(\tau_x=\infty\) due to \((Z_t)_{t\geq0}\) being non-explosive. Since \(X^x_{\vartheta+\theta}\in[-M,M]\) on \(\{\theta<\infty\}\) we can reiterate this argument to obtain a sequence \((\vartheta_n,\theta_n)_{n\in\nat_0}\) with \(\vartheta_n\leq \theta_n\leq\vartheta_{n+1}\) for all \(n\in\nat_0\), defined by \(\vartheta_0=\vartheta\), $\theta_0=\theta$, and
\begin{align*}
\theta_n&:=\inf\{t>\sigma_{n}: |X^x_t|\leq M'\},\\
\vartheta_{n+1}&:=\inf\{t>\theta_{n}: |X^x_t|\geq M\},
\end{align*}
for all \(n\geq1\).\\
Due to the fact that \((B_t)_{t\geq0}\) and \((N_t)_{t\geq0}\) have stationary and independent increments, and as $X_t^x$ always creeps into $[-M',M']$ or out of $[M,M]$, it follows that the durations of the excursions \((\vartheta_n-\theta_{n-1})_{n\in \nat}\) form a sequence of independent identically distributed random variables. Further, there exist \(\epsilon,\delta>0\) such that \(\PP(\vartheta_n-\theta_{n-1}>\epsilon)>\delta\) for all \(n\in\nat\) as otherwise \(\vartheta_n=\theta_n\) a.s. for all \(n\in\nat\).
By the same arguments as before it holds \(\vartheta_n,\theta_n<\tau_x\) for all \(n\in\nat_0\) if \(\tau_x<\infty\).  Fix \(N\in\nat\), then
\begin{align*}
\PP(\tau_x\leq N\epsilon)\leq \PP\left(\sum_{n=1}^m (\vartheta_n-\theta_{n-1})\leq N\epsilon\right) \leq \prod_{n=1}^{(m-N) \vee 0} \PP(\vartheta_n-\theta_{n-1}\leq\epsilon) <  (1-\delta)^{(m-N) \vee 0}
\end{align*}
for all \(m\in\nat\), since at most \(N\) out of the variables \((\vartheta_n-\theta_{n-1})_{n=1,\ldots,n}\) are allowed to be larger than \(\epsilon\). But letting \(m\to\infty\), the right-hand side of this inequality tends to \(0\). Since \(N\in\nat\) was arbitrary this implies that \(\{\tau_x<\infty\}\) is a \(\PP^x\)-null set for \(x\in[-M,M]\).\\ Clearly, the same arguments work for \(x\in[-M,M]^c\) as well by simply omitting the first stopping time \(\vartheta\).
\begin{itemize}
\item[(2)] \textit{The domain of the pointwise generator of \(\X\) contains \(\testo\).}
\end{itemize}
Let \(f\in\testo\) and fix \(x\in\real\) and \(t\in\real_+\). By Itô's formula and \eqref{sde_fastgrowth} we obtain
\begin{align*}
\lefteqn{\EE^x[f(X_t)-f(x)] }\\ %&= \EE^x \Big[\int_{(0,t]} f'(X_{s-})\diff X_s + \frac12 \int_{(0,t]} f''(X_{s-})\diff [X,X]^c_s\\
&= \EE^x \Big[\int_{ (0,t]} f'(X_{s-})\varphi_1(X_{s-})\diff B_s\Big] + \EE^x\Big[ \int_{(0,t]} f'(X_{s-})\varphi_2(X_{s-})\diff N_s\Big]\\
& \quad + \frac12 \EE^x\Big[ \int_{(0,t]} f''(X_{s-})\varphi_1(X_{s-})^2\diff [B,B]^c_s \Big]\\
& \quad + \EE^x\Big[ \int_{\real}\int_{(0,t]} \left( f(X_{s-}+\varphi_2(X_{s-})y) -f (X_{s-}) - f'(X_{s-})\varphi_2(X_{s-})y \right) \mu^N(\cdot,\diff s,\diff y)\Big], 
\end{align*}
where \(\mu^N(\omega;A,B)\), $A\in\cB(\real_+), B\in\cB(\real)$ denotes the jump measure of \((N_t)_{t\geq0}\). %, that is 
As $f'\varphi_1$ is bounded, the first summand is the expected value of a martingale and vanishes. Moreover, we note that  \([B,B]_s^c=[B,B]_s=s\). Thus, merging the integral term w.r.t. \(N_s\) with the last term we obtain
\begin{align*}
\EE^x[f(X_t)-f(x)]&= \frac12 \EE^x \Big[\int_{(0,t]} f''(X_{s-})\varphi_1(X_{s-})^2\diff s \Big] \\
& \quad + \EE^x \Big[ \int_{\real}\int_{(0,t]} \left( f(X_{s-}+\varphi_2(X_{s-})y) -f (X_{s-}) \right) \mu^N(\cdot,\diff s,\diff y)\Big].
\end{align*}
Since \(f,\varphi_2\in\cC_c^\infty(\real)\) the integrand of the second integral is bounded. We can therefore write the integral under the expectation in terms of the compensator of \(\mu^N\) and swap the order of integration which yields
\begin{align*}
\lefteqn{\EE^x \Big[ \int_{\real}\int_{(0,t]} \left( f(X_{s-}+\varphi_2(X_{s-})y) -f (X_{s-}) \right) \mu^N(\cdot,\diff s,\diff y)\Big]} \\
&= 	\EE^x \Big[  \int_{(0,t]} \int_{\real}\left( f(X_{s-}+\varphi_2(X_{s-})y) -f (X_{s-}) \right) \delta_1(\diff y)\,\diff s \Big]\\
&= \EE^x \Big[  \int_{(0,t]} \left( f(X_{s-}+\varphi_2(X_{s-})) -f (X_{s-}) \right) \diff s \Big].
\end{align*}
Hence, setting 
\begin{align}\label{eq_g}
	g(x):=\frac12 f''(x)\varphi_1(x)^2+f(x+\varphi_2(x))-f(x), \quad x\in\real,
\end{align}
we observe that 
\begin{align*}
\EE^x[f(X_t)-f(x)]=\EE^x \Big[ \int_{(0,t]} g(X_{s-})\diff s \Big]=\EE^x\Big[ \int_{(0,t]} g(X_{s})\diff s \Big]
\end{align*}
Clearly, \(g\in\testo\). Since the process \(g(X_s)\) is bounded and right continuous at zero we obtain
\begin{align}\label{eq_lim}
 \lim_{t\downarrow 0}\frac1t\EE^x\left[\int_{(0,t]} g(X_s)\diff s\right] = g(x),\quad  x\in\real,
\end{align}
by \cite[Lem. 2.51]{schnurr2009symbol}. Therefore \(f\in\domain\) and \(\cA f= g\).
\begin{itemize}
\item[(3)]\textit{For any $f\in \cC_c^\infty(\real)$, the function \((t,x)\mapsto  \frac1t (P_t f(x) - f(x))\) is bounded.}
\end{itemize}
We know that \(g\) as defined in \eqref{eq_g} is a test function and therefore bounded. Thus 
\begin{align*}
\frac1t |P_t f(x) - f(x)|\overset{(2)}{=}\frac1t\left|\EE^x\left[\int_0^t g(X_s)\diff s\right]\right|\leq\frac1t\int_0^t \EE^x\left[\left|g(X_s)\right|\right] \diff s \leq \|g\|_\infty <\infty.
\end{align*}
\begin{itemize}
\item[(4)]\textit{Derive a necessary criterion for \(\eta\).}
\end{itemize}
As $\eta$ is assumed to be a  probability measure, all bounded functions are contained in \(L^1(\eta)\). Thus the condition in Lemma \ref{lem_invlaw} is fulfilled for all \(f\in\testo\), and if $\eta$ is invariant, necessarily for all \(f\in\testo\)
\begin{align}\label{eq_invspec}
\int_\real \cA f(x)\eta(\diff x)=0.
\end{align}
Hereby, for all $f\in\testo$, the operator \((\cA,\domain)\) can be rewritten in the form of a Lévy-type operator as in Lemma \ref{lem_genrep}, namely 
\begin{align*}
\cA f(x)= g(x) %&= \frac12 f''(x)\varphi_1(x)^2 + \int_{y\neq0} \sgn(y)f'(x+y)\overline \mu(x,\diff y)\\
&= - \varphi_1(x) \varphi_1'(x) f'(x) + \frac12 \left(\varphi_1^2(x) f'(x) \right)' + \int_{\real} \sgn(y)f'(x+y)\overline \mu(x,\diff y),
\end{align*}
with  $$\overline\mu(x,\diff y):= \bone_{(0,\varphi_2(x))}(y)\diff y.$$
The corresponding jump functional is therefore given by 
\begin{align*}
\langle V(\eta), f \rangle &= \int_\real \int_{(0,\varphi_2(x))} f(x+y)\diff y \,\eta(\diff x) \\
&= \int_\real f(z) \int_\real \bone_{\{x<z< x+\varphi_2(x)\}} \eta(\diff x)\,\diff z =: \int_\real f(z) V(\eta,z)\diff z,
\end{align*}
which, in this case, is a regular distribution, associated with the locally integrable function \(V(\eta,\cdot)\). By Theorem \ref{thm_distEQ} \(\eta\) therefore solves
\begin{align}\label{eq_super_eta}
\frac 12\left({\varphi_1^2} \eta\right)'' = - V(\eta,\cdot)'
\end{align}
in the distributional sense. 
\begin{itemize}
\item[(5)]\textit{Any invariant probability measure \(\eta\) is absolutely continuous, and for \(|x|>M\) its density is inverse proportional to \(\varphi_1^2\).}
\end{itemize}
The absolute continuity of \(\eta\) follows from Corollary \ref{cor_abscont} since $\varphi_1(x)\neq 0$ on $\real$. \\ 
Setting $\eta(\diff x) = \eta(x) \diff x$ and integrating  \eqref{eq_super_eta} twice yields
\begin{align*}
\frac12{\varphi_1^2(x)} \eta(x) = - \int_{(0,x]} V(\eta,y)\diff y + c_1x+c_2.
\end{align*}
As $\supp(\varphi_2)\subset [-M',M']$ we observe that for \(y\notin[-M,M]\) it holds \(V(\eta,y)=0\) and in particular \(\int_{(0,x]} V(\eta,y)\diff y\) is constant outside of \([-M,M]\). From this we conclude that, if \(c_1\neq0\), the right-hand side would turn be negative for either \(x\ll -M\) or \(x\gg M\). As the left-hand side is non-negative for all \(x\in\real\) this implies \(c_1=0\). Differentiating again on both sides yields \eqref{eq_sol_of_suplinear}. \\ 
Moreover, it follows that there exist constants \(C_{1,2} \geq 0\) such that
\begin{align*}
\eta(x) = \begin{cases}
\frac{C_1}{\varphi_1^2(x)}, & x>M,\\
\frac{C_2}{\varphi_1^2(x)}, & x<-M.
\end{cases}
\end{align*} 
Lastly, for any compact interval \([a,b]\subset\real\) the process \((X^x_t)_{t\geq0}\) with \(x\in(a,b)\), conditioned on the event that no jump occurs until the exit time \(\tau_{a,b}:=\inf\{t>0 : X^x_t\notin[a,b]\}\), behaves like a regular diffusion on natural scale for \(t\in[0,\tau_{a,b}]\), see \cite[Chap. 33]{kallenberg} for details. As this implies that any open interval can be reached in finite time with positive probability, it follows that \(C_{1,2}>0\).  \\

This finishes the proof.
\end{proof}

%%%%%%%%%%%%%%%%%%%%%%%%%%%%%%%%%%%%%%%%%%%%%%%%%%%%%%%%%%%%%%%%%%%%%%%%%%%%%%%%%%%%%%%%%%%%%%%%%%%%%%%%%%%%%%%%%%%%%%%%%%%%%%%%%%%%%%%%%%%%%%%%%%%%%%%%%%%%%%%%%%%%%%%%%%%%%%%%%%%%%%%%%%%%%%%%%%%%%%%%%%%%%%%%%%%%%%%%%%%%%%%%%%%%%%%%%%%%
\section{The adjoint of a Lévy-type operator}\label{sec_Adj}
\setcounter{equation}{0}
%%%%%%%%%%%%%%%%%%%%%%%%%%%%%%%%%%%%%%%%%%%%%%%%%%%%%%%%%%%%%%%%%%%%%%%%%%%%%%%%%%%%%%%%%%%%%%%%%%%%%%%%%%%%%%%%%%%%%%%%%%%%%%%%%%%%%%%%%%%%%%%%%%%%%%%%%%%%%%%%%%%%%%%%%%%%%%%%%%%%%%%%%%%%%%%%%%%%%%%%%%%%%%%%%%%%%%%%%%%%%%%%%%%%%%%%%%%%

In this final section we discuss the implications of our results on the adjoint operator of a Lévy-type operator.

Let \(\cE_1,\cE_2\) be two Banach spaces over \(\real\) and denote their dual spaces by \(\cE_1'\) and \(\cE_2'\), respectively. Given a linear operator \(\cA:\domain\to\cE_2\) with \(\domain\subset\cE_1\) dense, the \textbf{adjoint operator} \((\cA',\mathscr{D}(\cA'))\) of \((\cA,\domain)\) is defined as the linear operator \(\cA': \mathscr{D}(\cA') \to \cE_1'\)  with domain
\begin{align}\label{def_dom_adj}
 \mathscr{D}(\cA')&:= \{\phi\in\cE_2': \exists c\geq 0 :  |\phi (\cA f)| \leq c\|f\|_{\cE_1}  ~\forall f\in\domain\} \subset \cE_2'%\\
% &=\{\phi\in\cE_2': \exists \psi\in\cE_1':  \psi (f)= \phi (\cA f)   ~\forall f\in\domain\} \subset \cE_2',	\anita{?}
\end{align}
and such that 
\begin{align}\label{eq_adj_ident}
  (\cA'\phi)(f) = \phi( \cA f)
 %\langle  \cA'\phi, f \rangle=\langle \phi, \cA f \rangle 
\end{align}
holds for all \(f\in\domain\) and \(\phi\in\mathscr{D}(\cA')\). 

The relation \eqref{eq_adj_ident} is the reason why sometimes adjoint operators are used to characterize infinitesimally invariant measures of Lévy-type operators; cf \cite{albeverio2,albeverio} for Lévy-type operators acting on \(L^2(\real^d)\), or \cite{sato} for the generators of OUT processes. Indeed, let \((\cA,\domain)\) be a Lévy-type operator with adjoint \((\cA',\mathscr{D}(\cA'))\), assume \(\test\subset\domain\) and let \(\eta\in\mathscr{D}(\cA')\) be an infinitesimally invariant measure of \((\cA,\domain)\). Then
\begin{align*}
	 (\cA'\eta)(f) = 0  \quad\text{for all }f\in\test,
	%\langle  \cA' \eta, f\rangle = 0,  \quad\text{for all }f\in\test.
\end{align*} 
i.e. \(\eta\) is a weak solution of
\begin{align}\label{def_adj_inv}
\cA'\eta = 0.
\end{align}

Our main result, Theorem \ref{thm_distEQ}, can be seen in this context as it indirectly provides a (partial) representation of the adjoint of a Lévy-type operator \((\cA,\domain)\) on the subdomain \(\mathscr{D}(\cA')\cap \cM(\cA,\pi)\). Note that, in our setting, we always assume \(\cE_1=\cE_2=:\cE\)

\begin{corollary}\label{cor_adjoint}
Let \(\cA:\domain\to\cE\) with \(\domain\subset\cE\) dense be a Lévy-type operator with characteristic triplet \((a,b,\Pi)\), and assume \(\test\subset\domain\). Let \(\pi=(\nu,\mu,\rho)\) be a convenient decomposition of the Lévy kernel \(\Pi\). For all measures \(\phi\in\mathscr{D}(\cA')\cap \cM(\cA,\pi)\) on \(\real^d\) it holds
\begin{align}\label{eq_formadj}
(\cA'\phi)\big|_{\test}= -\nabla \cdot ((a+a_\pi) \phi) + \frac12 \nabla \cdot b\nabla\phi + U^\pi(\phi) + \nabla\cdot V^\pi(\phi) + \hess \cdot W^\pi(\phi).
\end{align}
\end{corollary}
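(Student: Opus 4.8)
The plan is to show that both sides of \eqref{eq_formadj} act identically on every test function $f\in\test$, which suffices since two distributions that agree on $\test$ are equal. Fix a measure $\phi\in\mathscr{D}(\cA')\cap\cM(\cA,\pi)$. Because $\phi\in\cM(\cA,\pi)$, the right-hand side of \eqref{eq_formadj} is a well-defined distribution on $\real^d$, so it is legitimate to evaluate it against $f\in\test$; because $\phi\in\mathscr{D}(\cA')$, the left-hand side is the distribution $\cA'\phi$ restricted to $\test$, which by the defining identity \eqref{eq_adj_ident} satisfies $(\cA'\phi)(f)=\phi(\cA f)=\int_{\real^d}\cA f(x)\,\phi(\diff x)$ for all $f\in\domain\supset\test$.

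The core computation is then exactly the term-by-term rewriting carried out in the proof of Theorem~\ref{thm_distEQ}. First I would invoke Lemma~\ref{lem_genrep} to express $\cA f$, for $f\in\test\subset\cC^2_c(\real^d)\cap\domain$, in the convenient-decomposition form, so that $\int_{\real^d}\cA f\,\diff\phi$ equals the right-hand side of \eqref{eq_genequ} with $\eta$ replaced by $\phi$. Then I would apply integration by parts to the drift term (yielding $-\langle\nabla\cdot((a+a_\pi)\phi),f\rangle$), twice to the diffusion term (yielding $\tfrac12\langle\nabla\cdot b\nabla\phi,f\rangle$), leave the $U^\pi$-term as is, and handle the medium- and small-jump terms via the identities $\partial_{y/|y|}f(x+y)=\sum_i\partial_if(x+y)\tfrac{y_i}{|y|}$ and $\partial^2_{y/|y|}f(x+y)=\sum_{i,j}\partial_i\partial_jf(x+y)\tfrac{y_iy_j}{|y|^2}$ (the cross terms cancelling as in \eqref{eq_2nddirdiff}), producing $\langle\nabla\cdot V^\pi(\phi),f\rangle$ and $\langle\hess\cdot W^\pi(\phi),f\rangle$ respectively. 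Each of these manipulations is valid precisely because $\phi\in\cM(\cA,\pi)$ guarantees the relevant objects are distributions, so no integrability issue arises. Summing the five contributions shows $\phi(\cA f)$ equals the right-hand side of \eqref{eq_formadj} paired with $f$, for every $f\in\test$, which is the assertion.

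There is essentially no new obstacle here beyond what was already resolved in Theorem~\ref{thm_distEQ}: the proof is a direct corollary, the only genuinely new input being the bookkeeping that $\phi\in\mathscr{D}(\cA')$ makes $\cA'\phi$ a bona fide element of $\cE'$ whose restriction to $\test$ is the distribution $f\mapsto\phi(\cA f)$. The one point deserving a word of care is that $\cA'\phi\in\cE'$ is an abstract continuous functional, whereas the right-hand side of \eqref{eq_formadj} is a distribution in $\cD'$; the identity \eqref{eq_formadj} is therefore to be read as an equality of functionals on $\test$, i.e. of the restriction $(\cA'\phi)|_{\test}$, as stated. Once this is understood, the proof is just: quote Lemma~\ref{lem_genrep}, repeat the five integration-by-parts steps of the proof of Theorem~\ref{thm_distEQ} verbatim with $\eta\rightsquigarrow\phi$, and conclude.

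\begin{proof}
Let $\phi\in\mathscr{D}(\cA')\cap\cM(\cA,\pi)$. Since $\phi\in\cM(\cA,\pi)$, the right-hand side of \eqref{eq_formadj} defines a distribution in $\cD'$, so it may be tested against any $f\in\test$. On the other hand, since $\test\subset\domain$ and $\phi\in\mathscr{D}(\cA')$, the defining identity \eqref{eq_adj_ident} gives, for every $f\in\test$,
\begin{align*}
(\cA'\phi)(f)=\phi(\cA f)=\int_{\real^d}\cA f(x)\,\phi(\diff x).
\end{align*}
Now fix $f\in\test\subset\cC^2_c(\real^d)\cap\domain$. By Lemma~\ref{lem_genrep}, $\int_{\real^d}\cA f\,\diff\phi$ equals the right-hand side of \eqref{eq_genequ} with $\eta$ replaced by $\phi$. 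Performing exactly the computations \eqref{eq_a}, \eqref{eq_b}, \eqref{eq_V}, \eqref{eq_W} from the proof of Theorem~\ref{thm_distEQ} with $\eta$ replaced by $\phi$ (each step being legitimate because $\phi\in\cM(\cA,\pi)$ ensures the corresponding object is a distribution), together with the observation that the $U^\pi$-term already appears in the required form, we obtain
\begin{align*}
\int_{\real^d}\cA f(x)\,\phi(\diff x)
=\big\langle -\nabla\cdot((a+a_\pi)\phi)+\tfrac12\nabla\cdot b\nabla\phi+U^\pi(\phi)+\nabla\cdot V^\pi(\phi)+\hess\cdot W^\pi(\phi),\,f\big\rangle.
\end{align*}
Combining the two displays shows that $(\cA'\phi)(f)$ coincides with the right-hand side of \eqref{eq_formadj} applied to $f$, for every $f\in\test$. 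This is precisely the claimed identity \eqref{eq_formadj} on $\test$.
\end{proof}
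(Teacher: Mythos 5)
Your proposal is correct and follows exactly the route the paper intends: the corollary is an immediate consequence of the identity $(\cA'\phi)(f)=\phi(\cA f)$ together with Lemma~\ref{lem_genrep} and the term-by-term computations \eqref{eq_a}, \eqref{eq_b}, \eqref{eq_V}, \eqref{eq_W} from the proof of Theorem~\ref{thm_distEQ}, with $\eta$ replaced by $\phi$. Your remark that the equality is to be read as an identity of functionals on $\test$ (i.e.\ of the restriction $(\cA'\phi)|_{\test}$, the measure $\phi$ being paired with $\cA f$ by integration) is exactly the right bookkeeping.
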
 

Without further knowledge of the space \(\cE\) and the operator \((\cA,\domain)\), it seems impossible to obtain a general representation of the adjoint of a Lévy-type operator.\\

Note that throughout the literature various choices for \(\cE\) have been used, and ultimately, the decision depends on the desired results and the respective situation. To name a few examples: If the goal is to use Hilbert space techniques, a good option is \(\cE=L^2(\real^d)\), cf. \cite{albeverio2,albeverio}. For a very general viewpoint one may set \(\cE=\cB_b(\real^d)\), cf. \cite{bottcher, sandric}. In the context of Markov processes, in particular Feller processes, one typically either sees this choice, i.e. \(\cE=\cB_b(\real^d)\), or \(\cE=\cC_\infty(\real^d)\), cf. \cite{liggett}. \\

We conclude this section with two simple examples, both of which illustrate that the choice of the underlying Banach spaces may decide upon whether an (infinitesimally) invariant measure of a Lévy-type operator is contained in the domain of the adjoint operator or not. 

\begin{example}
Let \(\mathbf{e}(y):=\ee^{-y}\mathds{1}_{y\geq 0}\) and set $$\ccA f (x) = \int_\real (f(x+y) -f(x)) \mathbf{e}(y)\diff y = f*\mathbf{e}(x) - f(x).$$ Then $\ccA$ is the Feller generator of a compound Poisson process \((X_t)_ {t\geq0}\) with exponentially distributed jumps and  \(\cC_c^\infty(\real)\subset\mathscr{D}(\ccA)\), cf. \cite[Thm. 31.5 and Ex. 31.8]{sato2nd}. 
Clearly, the Lebesgue measure \(\diff x\) is infinitesimally invariant for \((\ccA,\mathscr{D}(\ccA))\) since 
\begin{align*}
\int_\real f*\mathbf{e}(x)\diff x = \int_\real f(x)\diff x \int_\real \mathbf{e}(y)\diff y = \int_\real f(x)\diff x,
\end{align*}
by Fubini and due to the translation invariance of the Lebesgue measure. Further, it is invariant for \((X_t)_{t\geq0}\) as well, cf. \cite[Thm 24.24]{sato2nd}. \\
However, as mentioned above, in the context of Feller processes one often chooses \(\cE=\cB_b(\real)\) or \(\cE=\cC_\infty(\real)\), and as both of their dual spaces are subspaces of the space of bounded measures on \(\real\), in both cases \(\diff x\notin\cE'\).
\end{example}

\begin{example}
Consider the SDE
\begin{align}\label{eq_sdeN}
\diff X_t = \phi  (X_{t-})\diff N_t, \quad X_0 \in\real,
\end{align}
where \((N_t)_{t\geq0}\) is a Poisson process with intensity \(\lambda=1\), and \(\phi \in\cC^\infty(\real)\) such that \(\phi(x)=2\) for all \(x<-\frac12\), \(\phi(x)=-2\) for all \(x>\frac12\), and \(\phi(x)\in[-2,2]\) for all \(x\in[-\frac12,\frac12]\). Then \textbf{(a)} - \textbf{(c)} of Section \ref{secSDEsublinear} are fulfilled, and thus by Theorem \ref{thm_sol} the SDE \eqref{eq_sdeN} possesses a unique solution \(\X\) which is a Feller process and has the pointwise generator \((\cA,\domain)\) given by
\begin{align*}
\cA f(x) = f(x+\phi(x))-f(x)
\end{align*}
for at least \(f\in\testo\). 
In fact, using Itô's formula and proceeding as in (2) of the proof of Theorem \ref{thm_super}, we can show that \(\domain=\cC_\infty(\real)\).  Moreover, with \cite[Lem. 1.31]{bottcher}, we even get \((\ccA,\mathscr{D}(\ccA))=(\cA,\domain)\).\\
It is easy to see that, since \(\X\) is not irreducible, it has multiple different invariant distributions, which might be limiting distributions  depending on \(X_0\). One such example is \(\eta:=\frac12(\delta_{-1}+\delta_{1})\). Indeed, for this choice one easily verifies that
\begin{align*}
\int_\real \ccA f(x) \eta(\diff x)= \frac12\left(f(1)-f(-1)+f(-1)-f(1)\right)=0.
\end{align*}
As \(\eta\in \cC_\infty(\real)'\) the above described approach via the adjoint operator would be feasible here to obtain this (and other) invariant measures.\\ 
However, e.g. considering the Lévy-type operator \((\ccA,\cC_\infty(\real)\cap L^2(\real))\) as operator on \(\cE=~L^2(\real)\), yields that \(\eta\) is infinitesimally invariant for \((\ccA,\cC_\infty(\real)\cap L^2(\real))\), but \(\eta\notin\cE'=L^2(\real)\). To eliminate solutions that are not contained in $\cE'$, the authors of \cite{albeverio}, in which the Hilbert space setting is used, introduce the notion of \textbf{regular} infinitesimally invariant measures, which are assumed to be absolutely continuous with twice integrable densities.
\end{example}

\bibliographystyle{plain}
\bibliography{InvLaw}

\end{document}